\newtheorem{theorem}{Theorem}[section]
\newtheorem{lemma}[theorem]{Lemma}
\newtheorem{corollary}[theorem]{Corollary}
\newtheorem{proposition}[theorem]{Proposition}
\theoremstyle{definition}
\newtheorem{definition}[theorem]{Definition}
\newtheorem{remark}[theorem]{Remark}
\newtheorem{example}[theorem]{Example}
\DeclareMathOperator*{\argmin}{arg\,min}
\numberwithin{equation}{section}
\newcommand{\rme}{\mathrm{e}}
\newcommand{\R}{\mathbb{R}}
\newcommand{\N}{\mathbb{N}}
\newcommand{\C}{\mathbb{C}}
\DeclarePairedDelimiter{\abs}{\lvert}{\rvert}
\DeclarePairedDelimiter{\norm}{\lVert}{\rVert}
\DeclareMathOperator{\dist}{dist}
\DeclareMathOperator{\diam}{diam}
\DeclareMathOperator{\bdry}{bdry}
\DeclareMathOperator{\sing}{sing}
\DeclareMathOperator{\divergence}{div}
\title[Mosco convergence and Sobolev inequalities]{Mosco convergence of Sobolev spaces and Sobolev inequalities for nonsmooth domains}
\author[M.~Fornoni]{Matteo Fornoni}
\author[L.~Rondi]{Luca Rondi}
\address[M.~Fornoni]{Dipartimento di Matematica, Universit\`a degli Studi di Pavia, Italy}
\email{matteo.fornoni01@universitadipavia.it}
\address[L.~Rondi]{Dipartimento di Matematica, Universit\`a degli Studi di Pavia, Italy}
\email{luca.rondi@unipv.it}
\begin{document}

\setcounter{section}{0}
\setcounter{secnumdepth}{2}

\begin{abstract}
We find extremely general classes of nonsmooth open sets which guarantee Mosco convergence for corresponding Sobolev spaces and the validity of Sobolev inequalities with a uniform constant. An important feature of our results is that the conditions we impose on the open sets for Mosco convergence and for the Sobolev inequalities are of the same nature, therefore it is easy to check when both are satisfied.

Our analysis is motivated, in particular, by the study of the stability of the direct acoustic scattering problem with respect to the scatterer, which we also discuss.

Concerning Mosco convergence in dimension 3 or higher, our result extends all those previously known in the literature.
Concerning Sobolev inequalities, our approach seems to be new and considerably simplifies the conditions previously required for the stability of acoustic direct scattering problems.

\medskip

\noindent\textbf{AMS 2020 Mathematics Subject Classification:} 49J45 46E35 (primary); 35P25 (secondary)

\medskip

\noindent \textbf{Keywords:} Mosco convergence, Sobolev inequalities, scattering problems
\end{abstract}

\maketitle

\section{Introduction}

The purpose of this paper is two-fold. We consider highly nonsmooth open sets in $\R^N$, $N\geq 2$. First, we investigate 
Mosco convergence of Sobolev spaces defined on these sets. Second, we study the validity of the Sobolev inequality. In particular, we find general classes of sets for which Mosco convergence holds or the Sobolev inequality holds with a constant which is independent on the set but depends only on the class.
Both these results are of independent interest. However, another interesting feature is that the conditions we impose on the sets for the two issues at hand are of the same nature, making much easier to check whether for some class of sets both properties are true. This is relevant since, following \cite{rondi:ac}, it is known that this is needed to guarantee stability of solutions to acoustic scattering problems as the sound-hard scatterer varies. Instead,  previously in \cite{rondi:ac} or in \cite{rondi:scatt}, besides those to ensure Mosco convergence, further and rather involved conditions were required to have uniform Sobolev inequalities.
Indeed, as an application, we later prove a stability result for this acoustic scattering problem that greatly generalizes the ones of \cite{rondi:ac,rondi:scatt}, since we allow a much larger class of scatterers, with conditions much easier to be verified, and we also allow the coefficients of the corresponding Helmholtz equation to vary.

About Mosco convergence of Sobolev spaces under variations of their domains,
 let $1<p<+\infty$ and let $D_n$, $n\in\N$, be open sets which are uniformly bounded.
The aim is to prove that, for some open set $D$, we have
Mosco convergence of $W^{1,p}(D_n)$ to $W^{1,p}(D)$. Also Deny-Lions spaces $L^{1,p}$ or $H(\mathrm{curl})$ spaces are of interest, but these cases can actually be often reduced to the analysis done for Sobolev spaces, see Remark~\ref{otherspacesrem}, therefore we concentrate our attention mostly to the Sobolev case.

The importance of studying this Mosco convergence is due to the fact that it has a well known strict relationship to stability of solutions to Neumann problems, still under variations of the domains, see for instance the characterisation in Proposition~\ref{equiv:mosconeumann}. In turn, the stability of solutions to Neumann problems has important applications in several fields, from fracture mechanics, see for instance \cite{dalmaso-toader}, to stability of direct scattering problems with varying scatterers, see \cite{rondi:ac,rondi:scatt} for the acoustic case and \cite{rondi:em} for the electromagnetic case where $H(\mathrm{curl})$ spaces come into play.

Typically, one assumes some kind of convergence of $D_n$ to $D$ as $n\to\infty$, usually convergence in the Hausdorff complementary topology.
The characterisation of Proposition~\ref{equiv:mosconeumann} implies that
a necessary condition for Mosco convergence is that, as $n\to\infty$, $| D_n | \to |D|$, actually that
$| D_n \Delta D|\to 0$, see Corollary~\ref{convmosco:diffsimm}. Another assumption that seems to be necessary is that the complement of $D_n$ has a uniformly bounded number of connected components, since otherwise we might end up in counterexamples such as the famous Neumann sieve, see \cite{murat}.
In dimension $N=2$, these conditions are indeed sufficient for $1<p\leq 2$, a result first shown in \cite{bucur:varchon0,bucur:varchon} for $p=2$ and then in \cite{dalmaso:nonl} for $1<p\leq 2$. In \cite{dalmaso:nonl} the result was also extended to Deny-Lions spaces $L^{1,p}$, under the same assumptions.
We mention that an important preliminary result in this direction was proved in \cite{chambolle:doveri}, where a uniform bound on the $\mathcal{H}^{1}$ measure of $\partial D_n$ was also required. These $2$-dimensional results made important use of complex analytic techniques, thus 
for $N\geq 3$ (and for $H(\mathrm{curl})$ spaces) it is extremely challenging to reach such a general result. Hence, the aim has been to find as general conditions as possible to ensure Mosco convergence. The breakthrough was in \cite{giacomini}, where $\partial D_n$ belongs to the class $\mathcal{G}(r,L,C)$, see Definition~\ref{classe:grlc},  given by suitable Lipschitz hypersurfaces, and Mosco convergence was proved for Sobolev spaces with $1<p\leq 2$.
In \cite{rondi:ac} the same result was proved by using still Lipschitz hypersurfaces, with slightly more regular boundary, corresponding to the class $\mathcal{MR}(r,L)$ of Definition~\ref{mrclassdef}, that were allowed to intersect each other, in a controlled way, along their boundaries. This result was extended to $H(\mathrm{curl})$ spaces in \cite{rondi:em}. 
We note that a crucial feature in the result of \cite{rondi:ac} was the fact that for $\mathcal{MR}(r,L)$ hypersurfaces converging in the Hausdorff distance we have convergence of their boundaries as well. Such a property is unfortunately not satisfied by $\mathcal{G}(r,L,C)$, see Example~\ref{pacman}, and this makes gluing hypersurfaces in this class not straightforward.

The first aim of the present work is to construct an extremely general class, that contains both the one of \cite{giacomini} and the one of \cite{rondi:ac}, for which Mosco convergence holds, with $1<p\leq 2$. This class extends all previously known results, it is optimal in many respects and its
generality makes it fit for most applications. Let us mention that there are counterexamples, in \cite{dalmaso:nonl} for $N=2$ and in \cite{giacomini} for $N\geq 3$, for the case $p>2$ which seems to be still quite open.

The main difficulty in establishing this new class was in understanding better the properties of hypersurfaces beloging to the class $\mathcal{G}(r,L,C)$ introduced in \cite{giacomini}, which we discuss in the Appendix, and in finding a way to properly glue them together to construct more complicated structures. In Definition~\ref{classe:finale1}, we construct the class $\mathcal{FR}(r,L,M_0)$ given by Lipschitz hypersurfaces which are given by the union of at most $M_0$ hypersurfaces of type $\mathcal{MR}(r,L)$. This new class, which clearly extends the class $\mathcal{MR}(r,L)$, also generalizes classes of the type $\mathcal{G}(r,L,C)$, see Theorem~\ref{inclusione:giac}. Instead of the boundary, for Lipschitz hypersurfaces in $\mathcal{FR}(r,L,M_0)$, the important set is what we call the \emph{singular set} which is the union of the boundaries of the $\mathcal{MR}(r,L)$ components of the hypersurfaces. It turns out that, contrary to the boundary, the singular set is stable under convergence in the Hausdorff distance and this allows us to glue together hypersurfaces from $\mathcal{FR}(r,L,M_0)$, taking care of doing that along their singular sets and not just their boundary as was done in \cite{rondi:ac} for $\mathcal{MR}(r,L)$ hypersurfaces. We obtain a class, $\mathcal{FR}(r,L,M_0,\omega)$ of Definition~\ref{classe:finale2}, which is thus more general than all previous examples. Still, a Mosco convergence result for $1<p\leq 2$ holds, see Theorem~\ref{teor:finale}, the first main result of the paper. We note that our work here is pretty much of a geometrical nature. In fact, once the right geometrical conditions are found, the Mosco result follows straightforwardly from the arguments of \cite{rondi:ac}, which in turn were strongly based on those of \cite{giacomini}.

About Sobolev inequalities, this is a classical problem which has been widely studied and several quite general sufficient conditions have been investigated, see for instance \cite{adams,M} and the references therein. Here we are dealing with a particular case, since the complexity of the sets of $\mathcal{FR}(r,L,M_0,\omega)$ constructed in Section~\ref{Moscosec} makes it hard for them to fit into the well established cases. Moreover, we wish to control the constant of the Sobolev inequality.
The main difficulty lies in the intersection of the hypersurfaces which, although controlled by the function $\omega$, could be very general. If $\omega$ goes to zero fast enough we can create very narrow regions between two different hypersurfaces and estimates there become critical. Thus, in order to have enough control on these in-between regions, we need to make some assumptions on $\omega$. In particular, we assume that, for $s>0$, $\omega(s)=as$ for some positive constant $a$, that is, $\omega$ is linear. This essentially excludes cusp-like situations which are difficult to handle. Another restriction we need to make is that the intersection cannot happen along the singular sets of the different hypersurfaces, but only along their boundaries. In fact, we have a sufficient control on the boundaries of hypersurfaces in $\mathcal{FR}(r,L,M_0)$, whereas very little can be said about their singular sets. However, provided these two restrictions are enforced, we are able to prove that the Sobolev inequality holds  with a constant independent on our set, see Theorem~\ref{sobolevemb:linear}, the second main result of the paper. In our opinion also the method of proof can be of interest, because it employs the so called Friedrichs inequality, in its general version due to Maz'ya, see \cite{M0} and \cite[Corollary on page~319]{M}, which has been further generalised in \cite{rondi:friedrichs}. Indeed, our assumptions on the boundary allow to control the traces of Sobolev functions in suitable integral norms on the boundary. Using this estimate in the Friedrichs inequality leads to an estimate on suitable integral norms of the Sobolev functions inside the domain, thus establishing the Sobolev inequality. In this respect, Proposition~\ref{friedrichs}, which relates the validity of the Sobolev inequality to that of an optimal trace inequality, might be useful in other applications.

The plan of the paper is the following. In Section~\ref{notationsec} we set the notation and discuss general properties of Mosco convergence for Sobolev spaces. In Section~\ref{Moscosec} we construct our classes of hypersurfaces and discuss our Mosco convergence results. In Section~\ref{Sobolevsec}, we discuss conditions on our open sets for the Sobolev inequality to hold. In Section~\ref{section:classscat} we apply our results to the stability of solutions to acoustic scattering problems. Finally, in the Appendix we discuss classes $\mathcal{G}(r,L,C)$ from \cite{giacomini} and we compare them with our classes
$\mathcal{FR}(r,L,M_0)$.

\medskip

\noindent
\textbf{Acknowledgement}\\
Luca Rondi acknowledges support by GNAMPA, INdAM, and by PRIN 2017 
project n.~201758MTR2 funded by Ministero dell'I\-stru\-zio\-ne, dell'Universit\`a
e della Ricerca, Italy.
\
\medskip

\noindent
\textbf{Conflict of interest statement}\\
The authors have no conflict of interest to declare.

\section{General setting and first properties}\label{notationsec}

The integer $N\geq 2$ denotes the space dimension and we note that we drop the dependence of any constant on $N$. 
For any Borel set $E\subset\mathbb{R}^N$, we denote with
$|E|$  its Lebesgue measure, whereas, for any $s\geq 0$, $\mathcal{H}^{s}(E)$ denotes its $s$-dimensional Hausdorff measure.
For any $x\in\mathbb{R}^N$ and any $r>0$, $B_r(x)$ denotes the open ball with center $x$ and radius $r$. Usually $B_r$ stands for $B_r(0)$.
For any $E\subset\mathbb{R}^N$, $E$ not empty, and $y\in\R^N$, we call \emph{distance} of $y$ from $E$ the number $\mathrm{dist}(y,E):=\displaystyle{\inf_{z\in E}|y-z|}$.
For any $E\subset\mathbb{R}^N$, $E$ not empty, we call $\displaystyle{B_r(E):=\bigcup_{x\in E}B_r(x)=\{y\in\R^N\mid\mathrm{dist}(y,E)<r\}}$.

For any $p$, $1\leq p\leq +\infty$, we denote with $p'$ its conjugate exponent, that is $1/p+1/p'=1$. If $p<N$ we call $p^{\ast}$ its Sobolev conjugate exponent, that is $p^{\ast}=\dfrac{pN}{N-p}.$

With $M_{sym}^{N\times N}(\R)$ we denote the space of symmetric real-valued $N\times N$ matrices, whereas $I_N$ denotes the identity $N \times N$ matrix.

By a domain in $\R^N$ we mean an open and connected set.
We say that an open set $\Omega\subset\mathbb{R}^N$
has a \emph{Lipschitz boundary} if for any $x\in\partial\Omega$ there exist a neighbourhood $U_x$  of $x$ and a Lipschitz function $\varphi:\mathbb{R}^{N-1}\to \mathbb{R}$ such that, up to a rigid change of coordinates, we have 
$$\Omega\cap U_x=\{y=(y_1,\ldots,y_{N-1},y_N)\in U_x \mid y_N<\varphi(y_1,\ldots,y_{N-1})\}.$$
We usually denote with $\nu$ the exterior unit normal to $\partial\Omega$.

We say that an open set $\Omega\subset\mathbb{R}^N$
 belongs to the class $\mathcal{A}(r,L,R)$ if $\Omega\subset B_R$ and its boundary is Lipschitz with constants $r$ and $L$ in the following sense: for any $x\in\partial\Omega$ we can choose in the previous definition $U_x=B_r(x)$ and $\varphi$ with Lipschitz constant bounded by $L$.

\begin{remark}\label{oss1} For any bounded open set $\Omega$ with Lipschitz boundary, there exist constants $r$, $L$ and $R$ such that $\Omega\in \mathcal{A}(r,L,R)$.
\end{remark}

\begin{definition} Let $1\leq p<+\infty$.
	A bounded open set $D$ in $\R^N$ satisfies the \emph{Rellich compactness property} for $p$ (RCP$_p$) if the natural immersion of $W^{1,p}(D)$ into $L^p(D)$ is compact.
\end{definition}

\begin{remark}\label{Rellichremark}
If a bounded open set $D$ is Lipschitz, then it satisfies RCP$_p$ for any $1\leq p<+\infty$. Another sufficient condition is a \emph{higher integrability property}. Namely, if
there exist $q>p$ and $C>0$ such that
\begin{equation}\label{higher}
\|v\|_{L^{q}(D)}\leq C\|v\|_{W^{1,p}(D)}\quad\text{for any }v\in W^{1,p}(D),
\end{equation}
then RCP$_p$ holds.
\end{remark}

A function $\omega$ is a \emph{modulus of continuity} if $\omega:(0,+\infty)\to (0,+\infty)$ is a non-decreasing, left-continuous function such that $\displaystyle \lim_{t\to 0^+}\omega(t)=0$. 

Let $\Omega \subset \R^N$ be a fixed bounded open set.
We define $\mathcal{K}(\overline{\Omega}) := \{ K \subset \overline{\Omega}\mid K \text{ not empty compact set} \}$ and $\mathcal{O}(\Omega):=\{D\subset \Omega\mid D\text{ open set}\}$.
We use the following notation. For any $D\in \mathcal{O}(\Omega)$, we call $K(D)=\overline{\Omega}\setminus D\in \mathcal{K}(\overline{\Omega})$.

Given two not empty compact sets $K$ and $\tilde{K}$, the \emph{Hausdorff distance} between $K$ and $\tilde{K}$ is given by
\[ d_H(K,\tilde{K}) := \max \left\{ \sup_{x\in \tilde{K}} \dist(x,K), \sup_{y\in K} \dist(y,\tilde{K}) \right\}. \]
We recall that $(\mathcal{K}(\overline{\Omega}), d_H)$ is a compact metric space, see \cite[Chapter 8]{dalmaso} for instance.

Given $\{ K_n \}_{n\in\N} \subset \mathcal{K}(\overline{\Omega})$ and $K\in\mathcal{K}(\overline{\Omega})$, we say that $K_n$ converges to $K$ in the Hausdorff distance if $d_H(K_n,K) \to 0$ as $n\to \infty$. Analogously, given $\{ D_n \}_{n\in\N} \subset \mathcal{O}(\Omega)$ and $D\in\mathcal{O}(\Omega)$, we say that $D_n$ converges to $D$ in the \emph{Hausdorff complementary topology} if the sequence of compact sets $K_n=K(D_n)$ converges to the compact set $K=K(D)$ in the Hausdorff distance.

\begin{remark}
	The following statements about convergence in the Hausdorff distance are equivalent.
	\begin{enumerate}[1)]
		\item $K_n \to K$ in the Hausdorff distance.
		\item\label{cond2Haus} For any $\varepsilon > 0$ there exists $\bar{n} \in \N$ such that $K\subset B_{\varepsilon}(K_n)$ and $K_n\subset B_{\varepsilon}(K)$ for every $n \ge \bar{n}$.
		\item For any $y\in K$ there exists $\{x_n\}_{n\in\N}$ with $x_n\in K_n$ such that $x_n\to y$ and \\
			for any $\{x_{n_j}\}_{j\in\N}$ with $x_{n_j}\in K_{n_j}$ such that $x_{n_j}\to y$, then $y\in K$.
	\end{enumerate}
	
	By using condition \ref{cond2Haus}), it is also easy to show that if $K_n \to K$ in the Hausdorff distance, then $| K_n \setminus K | \to 0$ as $n\to \infty$.
	
\end{remark}

\begin{remark}
\label{hausd:monotonia}
	Another useful property of the convergence in the Hausdorff distance is the following:
	\[\text{if  }K_n \to K \text{ and } \partial K_n \to \tilde{K} \text{ in the Hausdorff distance, then } \partial K \subset \tilde{K} \subset K. \]
	
	Indeed, the inclusion $\tilde{K}\subset K$ is trivial.  
	Suppose, by contradiction, that $\partial K \not\subset \tilde{K}$. Then, there exists $x\in \partial K$ such that $x \notin \tilde{K}$.
	Since $\partial K_n \to \tilde{K}$, there exist $c>0$ and $\{n_k\}_{k\in\N}$ such that $\dist(x,\partial K_{n_k})>c$ for every $k\in\N$.
	However,
	 $x\in \partial K\subset K$ implies that there exists a sequence $x_n\in K_n$ such that $x_n\to x$, hence $\dist(x_{n_k},\partial K_{n_k}) > c/2$ for any $k\ge \bar{k}$ for some $\bar{k}\in\N$. It follows that $\overline{B_{c/2}(x_{n_k})} \subset K_{n_k}$ for any $k\ge \bar{k}$ and, since $x_{n_k}\to x$, we conclude that $\overline{B_{c/2}(x_{n_k})} \to \overline{B_{c/2}(x)}$ in the Hausdorff distance, thus $\overline{B_{c/2}(x)}\subset K$, which
	 contradicts the fact that $x\in\partial K$.
\end{remark}

\subsection{Mosco convergence for Sobolev spaces}

Let $X$ be a reflexive Banach space and let $\{ A_n\}_{n\in\N}$ be a sequence of closed subspaces of $X$. Denote with $A'$ the set made of the elements of $X$ which are weak limit of a sequence of elements taken from a subsequence of $\{ A_n\}_{n\in\N}$ and with $A''$ the set made of the elements of $X$ which are strong limit of a sequence of elements in $\{ A_n\}_{n\in\N}$. In other words
\[	A' = \{ x\in X \mid x = w\text{-}\lim_{k\to \infty}{x_{n_k}},\, x_{n_k}\in A_{n_k} \}	\] 
\[	A'' = \{ x\in X \mid x = s\text{-}\lim_{n\to \infty}{x_n},\, x_n\in A_n \}.	\] 
Observe that, from the definitions, it is clear that $A'$ and $A''$ are subspaces of $X$ such that $A'' \subset A'$ and that $A''$ is closed in $X$ with respect to the strong topology given by the norm. Then, we can define Mosco convergence.

\begin{definition}
\label{def:mosco}
	Let $X$ be a reflexive Banach space, $\{ A_n\}_{n\in\N}$ be a sequence of closed subspaces of $X$ and  $A'$ and $A''$ be as above. We say that $A_n$ \emph{converges in the sense of Mosco} as $n\to \infty$ to a closed subspace $A\subset X$ if it holds that $A = A' = A''$.
\end{definition}

\begin{remark}
	From the definition and the properties of $A'$ and $A''$ it follows that the condition $A = A' = A''$ is equivalent to the two conditions
	\begin{enumerate}[(M1)]
		\item $[A'\subset A]$ For any $x\in X$, if there exists a subsequence $\{ A_{n_k}\}_{k\in\N}$ and a sequence $\{ x_k \}_{k\in\N}$ with $x_k\in A_{n_k}$ such that $x_k\rightharpoonup x$ weakly as $k\to\infty$, then $x\in A$.
		\item $[A\subset A'']$ For any $x\in A$, there exists a sequence $\{ x_n \}_{n\in\N}$ with $x_n\in A_n$ for every $n\in\N$ such that $x_n\to x$ in $X$ as $n\to\infty$.
	\end{enumerate}
	Therefore, to prove Mosco convergence of $A_n$ to $A$, it is enough to check if the conditions (M1) and (M2) are satisfied.
\end{remark}

Condition (M2) is generally the most difficult to prove, indeed we recall that there are two useful reductions which will be used later on. First, since $A''$ is closed, it is enough to prove (M2) for a subset $S$ dense in $A$. Second, it is enough to prove (M2) for subsequences, that is, to prove that for any $x\in A$ and for any subsequence $\{A_{n_j}\}_{j\in \N}$ there exist a further subsequence $\{A_{n_{j_k}}\}_{k\in \N}$ and elements $x_k \in A_{n_{j_k}}$ such that $x_{k} \to x$ in $X$.

Mosco convergence of Sobolev spaces on varying open sets is defined as follows. Let $N\ge 2$ and $1<p<+\infty$ be fixed. Let $\Omega \subset \R^N$ be a fixed bounded open set.
For any $D\in \mathcal{O}(\Omega)$, 
we have an isometric embedding of $W^{1,p}(D)$ in $L^p(\Omega; \R^{N+1})$ given as follows. For any $u\in W^{1,p}(D)$ we associate $(u,\nabla u)\in L^p(\Omega; \R^{N+1})$ where $u$ and $\nabla u$ are extended to $0$ outside $D$. In this way it is possible to consider $W^{1,p}(D)$ as a closed subspace of $L^p(\Omega; \R^{N+1})$. Then, we can state the following definition.

\begin{definition}\label{Moscodefsob}
	Let $\Omega \subset\R^N$ be a bounded open set. Let $\{ D_n \}_{n\in\N} \subset \mathcal{O}(\Omega)$ and $D\in\mathcal{O}(\Omega)$ and let $1<p<+\infty$. We say that $W^{1,p}(D_n)$ \emph{converges in the sense of Mosco} to $W^{1,p}(D)$ as $n\to \infty$ if, interpreted as closed subspaces of $L^p(\Omega; \R^{N+1})$, such spaces converge in the sense of Definition \ref{def:mosco}.
\end{definition}  

One of the purposes of this work is to find very general conditions on the sequence $\{ D_n \}_{n\in\N} \subset \mathcal{O}(\Omega)$ such that the spaces $W^{1,p}(D_n)$ converge in the sense of Mosco to $W^{1,p}(D)$ for some $D\in \mathcal{O}(\Omega)$. Typically, one assumes  that the sequence $\{D_n\}$ converges in some sense to $D$. For example, one assumes that the sets $D_n$ converge to $D$ in the Hausdorff  complementary topology.

\begin{remark}\label{setminus} Sometimes $D$ is described as $\Omega\setminus K$ for some $K\in\mathcal{K}(\overline{\Omega})$ and properties of $K$ are considered instead of those of $D$. Here we recap how to go from this latter formulation to the one we use in this paper. We have that $K(D)=K\cup\partial\Omega$,
$\partial D\cap \Omega=\partial K\cap \Omega$, $\Omega\setminus \partial K=\Omega\setminus \partial D$,
$\partial (\Omega\backslash \partial K)=\partial K\cup \partial \Omega= K(\Omega\setminus \partial K)$ and
 the following holds.

Let $\{ K_n \}_{n\in\N} \subset \mathcal{K}(\overline{\Omega})$, $K\in\mathcal{K}(\overline{\Omega})$ and $\tilde{K}\in\mathcal{K}(\overline{\Omega})$. We call $D_n=\Omega\setminus K_n$ and $D=\Omega\setminus K$. Then, if $K_n\to K$ in the Hausdorff distance, we conclude that $D_n$ converges to $D$ in the Hausdorff complementary topology. Consequently, if
$\partial K_n\to \tilde{K}$ in the Hausdorff distance, then $\Omega \setminus \partial K_n$ converges to $\Omega\setminus \tilde{K}$ in the Hausdorff complementary topology and $\partial (\Omega\backslash \partial K_n)$ converges  to $\tilde{K}\cup \partial\Omega$
 in the Hausdorff distance.
\end{remark}

\begin{remark}\label{higherMosco}
Sometimes, it is useful that in condition (M1) we have convergence of $u_{n_k}$ to $u$, as $k\to\infty$, strongly in $L^p(\Omega)$. To this aim the following result, which can be easily proved by using a similar argument to the one of \cite[Proposition~2.9]{rondi:ac} for $p=2$, is of interest. 

Let $1\leq p<+\infty$ and let $\{ D_n \}_{n\in\N} \subset \mathcal{O}(\Omega)$ and $D\in \mathcal{O}(\Omega)$ for some arbitrary bounded open set $\Omega$. Assume that $D_n$ converges to $D$ in the Hausdorff complementary topology and that there exist
$q>p$ and $C>0$ such that \eqref{higher} holds uniformly with $D$ replaced by $D_n$ for any $n\in\N$. Let $u_n\in W^{1,p}(D_n)$ and $u\in W^{1,p}(D)$ be such that
$(u_n,\nabla u_n)$ converges to $(u,\nabla u)$ weakly in $L^p(\Omega;\R^{N+1})$, with the usual extensions to $0$. Then $u_n$ converges to $u$ strongly in $L^p(\Omega)$. 
\end{remark}

We now state the first result about Mosco convergence of Sobolev spaces $W^{1,p}$, $1<p<+\infty$. Of particular importance are points \textit{iv}) and \textit{vi}) that extend Proposition~2.2 in \cite{rondi:ac}, which dealt with the case $p=2$ only. We would like to stress that this result is very useful since one can prove Mosco convergence of Sobolev spaces like $W^{1,p}(\Omega \setminus \partial D_n)$ and then use Proposition~\ref{mosco:m1} \textit{iv}) and  \textit{vi}) to recover also convergence for those like $W^{1,p}(D_n)$. Since, with a little amount of regularity, $\partial D_n$ are roughly speaking hypersurfaces in $\R^N$, in the next section we will focus on this kind of closed sets.

\begin{proposition}
\label{mosco:m1}
Let $\Omega \subset\R^N$ be a bounded open set. Let $\{ D_n \}_{n\in\N} \subset \mathcal{O}(\Omega)$ and $D\in\mathcal{O}(\Omega)$ and let $1<p<+\infty$.
	Denote $A_n = W^{1,p}(D_n)$, $n\in\N$, and $A = W^{1,p}(D)$. Then we have the following results.
	\begin{enumerate}[i\textnormal{)}]
	\item	If $D_n\to D$ in the Hausdorff complementary topology and $| D_n \setminus D | \to 0$ as $n\to \infty$, then condition \textnormal{(M1)} of Mosco convergence holds, that is, $A'\subset A$.
	\item If $D_n\subset D$ for every $n\in\N$ and $| D \setminus D_n | \to 0$ as $n\to \infty$, then condition \textnormal{(M2)} of Mosco convergence holds, that is, $A\subset A''$.
	\item If $D_n\to D$ in the Hausdorff complementary topology and $D_n \subset D$ for every $n \in \N$, then $A_n$ converges to $A$ as $n\to \infty$ in the sense of Mosco.
	\item If $D_n\to D$ in the Hausdorff complementary topology, $\partial D_n\cup \partial\Omega \to \tilde{K}$, for some $\tilde{K}\in\mathcal{K}(\overline{\Omega})$, in the Hausdorff distance and $W^{1,p}(\Omega \setminus \partial D_n)$ and $W^{1,p}(\Omega \setminus \tilde{K})$ satisfy condition \textnormal{(M1)}, then condition \textnormal{(M1)} of Mosco convergence holds, that is, $A'\subset A$.
	\item If $D_n\to D$ in the Hausdorff complementary topology,
$\partial D_n\cup \partial\Omega \to \tilde{K}$, for some $\tilde{K}\in\mathcal{K}(\overline{\Omega})$, in the Hausdorff distance and $|\tilde{K}\cap\Omega|=0$, then condition \textnormal{(M1)} of Mosco convergence holds, that is, $A'\subset A$.
	 \item If $D_n\to D$ in the Hausdorff complementary topology, 
$\partial D_n\cup \partial\Omega \to \tilde{K}$, for some $\tilde{K}\in\mathcal{K}(\overline{\Omega})$, in the Hausdorff distance and $W^{1,p}(\Omega \setminus \partial D_n)$ and $W^{1,p}(\Omega \setminus \tilde{K})$ satisfy condition \textnormal{(M2)}, then condition \textnormal{(M2)} of Mosco convergence holds, that is, $A\subset A''$.
\end{enumerate}

\end{proposition}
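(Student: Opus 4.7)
My plan is to handle the six parts in the order (i)--(iii), (iv), (v)--(vi), with the last three reducing to the first three via the embedding $W^{1,p}(D_n)\hookrightarrow W^{1,p}(\Omega\setminus\partial D_n)$ by zero-extension. For (i), I would take a subsequence $(u_{n_k},\nabla u_{n_k})\rightharpoonup(u,v)$ weakly in $L^p(\Omega;\R^{N+1})$ and first show $u=v=0$ a.e.\ on $\Omega\setminus D$: test against $\chi\in C_c^\infty(\Omega\setminus D)$ and apply H\"older's inequality, exploiting $|D_{n_k}\cap\supp\chi|\le|D_{n_k}\setminus D|\to 0$. For $\varphi\in C_c^\infty(D)$, Hausdorff complementary convergence forces $\supp\varphi\subset D_{n_k}$ eventually, so integration by parts passes to the weak limit and $v=\nabla u$ on $D$. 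Part (ii) is standard: set $u_n:=u|_{D_n}$ and use absolute continuity of $\int|u|^p+\int|\nabla u|^p$ against $|D\setminus D_n|\to 0$. Part (iii) is an immediate combination: $D_n\subset D$ gives (i); conversely $K(D_n)\supset K(D)$, and the fact noted in the excerpt that Hausdorff convergence implies $|K(D_n)\setminus K(D)|\to 0$ yields $|D\setminus D_n|\to 0$, hence (ii).

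\medskip

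The main novelty is (iv). Extension by zero embeds $W^{1,p}(D_n)$ isometrically into $W^{1,p}(\Omega\setminus\partial D_n)$, because $\Omega\setminus\partial D_n = D_n\sqcup(\Omega\setminus\overline{D_n})$ splits as disjoint open sets. So any weak limit $(u,v)$ from $A_n$ is also a weak limit in the ambient spaces, and the assumed (M1) places $(u,v)$ in $W^{1,p}(\Omega\setminus\tilde K)$. A short contradiction argument using the Hausdorff convergences of $K(D_n)$ and $\partial D_n\cup\partial\Omega$ yields $\partial D\cap\Omega\subset\tilde K\subset K(D)$, so $D\subset\Omega\setminus\tilde K$ with empty relative boundary, making $D$ a union of connected components of $\Omega\setminus\tilde K$. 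The main obstacle is to show $u=0$ a.e.\ on $(\Omega\setminus\tilde K)\setminus D$, and this I would handle ball by ball: for $B\Subset(\Omega\setminus\tilde K)\setminus D$, positivity of $\dist(\overline B,\tilde K)$ combined with Hausdorff convergence of $\partial D_n\cup\partial\Omega$ gives $B\cap\partial D_n=\emptyset$ eventually; since $B$ is connected, it lies entirely in $D_n$ or in $\R^N\setminus\overline{D_n}$, and the former is ruled out by picking $x_n\in K(D_n)$ converging to the center of $B$ (which belongs to $K(D)\cap\Omega$ by Hausdorff convergence of $K(D_n)$). Thus the zero-extended $u_{n_k}$ vanishes on $B$, so $u=0$ a.e.\ there, and covering $(\Omega\setminus\tilde K)\setminus D$ by such balls concludes.

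\medskip

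Part (v) becomes a corollary of (iv) because its hypothesis is automatic: by Remark~\ref{setminus} one has $\Omega\setminus\partial D_n\to\Omega\setminus\tilde K$ in the Hausdorff complementary topology, and $|(\Omega\setminus\partial D_n)\setminus(\Omega\setminus\tilde K)|\le|\tilde K\cap\Omega|=0$, so (i) supplies (M1) for the ambient spaces. For (vi), given $u\in W^{1,p}(D)$, its extension by zero lies in $W^{1,p}(\Omega\setminus\tilde K)$ since $\Omega\setminus\tilde K = D\sqcup[(\Omega\setminus\tilde K)\setminus D]$ as disjoint open pieces on each of which the extension is obviously $W^{1,p}$; applying (M2) for the ambient spaces produces $v_n\in W^{1,p}(\Omega\setminus\partial D_n)$ with $(v_n,\nabla v_n)\to(u,\nabla u)$ strongly in $L^p$. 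Setting $u_n:=v_n|_{D_n}$, a direct decomposition gives
\[
\|u_n - u\|_{L^p(\Omega)}^p \le \|v_n - u\|_{L^p(\Omega)}^p + \|u\|_{L^p(D\setminus D_n)}^p,
\]
whose first summand vanishes by the chosen strong convergence and second by absolute continuity, using $|D\setminus D_n|\to 0$ (automatic from Hausdorff complementary convergence). The same decomposition for gradients finishes (M2) for the smaller spaces.
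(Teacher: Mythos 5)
Your route matches the paper's in spirit: the paper defers parts \textit{i})--\textit{iii}) to Lemmas 2.3--2.4 and Corollary 2.6 of \cite{rondi:ac}, part \textit{iv}) to the first half of Proposition 2.2 of \cite{rondi:ac} together with the topological inclusion $\partial D \subset \tilde K\subset K(D)$, part \textit{v}) to a combination of \textit{iv}) and \textit{i}), and part \textit{vi}) to Lemma 4.2 of \cite{rondi:em}; you flesh out the same scheme (zero-extension into $W^{1,p}(\Omega\setminus\partial D_n)$, locating weak limits as elements of $W^{1,p}(\Omega\setminus\tilde K)$, and then the ball-by-ball vanishing argument on $(\Omega\setminus\tilde K)\setminus D$). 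The ball argument in \textit{iv}) and the disjoint-support decomposition in \textit{vi}) are both correct, and the reduction of \textit{v}) to \textit{iv}) and \textit{i}) via $|(\Omega\setminus\partial D_n)\setminus(\Omega\setminus\tilde K)|\le|\tilde K\cap\Omega|=0$ is exactly what the paper intends.

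One small imprecision worth fixing in \textit{i}): testing the weak limit against $\chi\in C_c^\infty(\Omega\setminus D)$ only shows $u=v=0$ a.e.\ on the \emph{interior} of $\Omega\setminus D$, and $\partial D$ may have positive Lebesgue measure, so this does not quite give $u=v=0$ on all of $\Omega\setminus D$. The remedy is to test against an arbitrary $\chi\in L^{p'}(\Omega)$ vanishing a.e.\ on $D$: then $\int_\Omega u_{n_k}\chi=\int_{D_{n_k}\setminus D}u_{n_k}\chi$, and H\"older together with the absolute continuity of $\int|\chi|^{p'}$ and $|D_{n_k}\setminus D|\to0$ still forces the limit to vanish, yielding $u=0$ a.e.\ on $\Omega\setminus D$ exactly, and similarly for $v$.
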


\begin{proof} For points \textit{i}), \textit{ii}) and \textit{iii}), see Lemma~2.3, Lemma 2.4 and Corollary~2.6 in \cite{rondi:ac} where the same results are proved for $p=2$.

For point \textit{iv}), we note that, as in Remark~\ref{hausd:monotonia}, $\partial D\subset \partial K(D)\cup \partial\Omega \subset \tilde{K}\subset K(D)$.
Then we can use the argument of the first part of the proof of Proposition~2.2. in \cite{rondi:ac}, which is done for $p=2$.
For point \textit{v}), we just use point \textit{iv}) and \textit{i}).

For point \textit{vi}), one can repeat the argument of the second part of the proof of Lemma~4.2 in \cite{rondi:em}.
\end{proof}

\begin{remark}\label{otherspacesrem} An important remark is the following. We can consider Mosco convergence not only for Sobolev spaces $W^{1,p}$, $1<p<+\infty$, but also for Deny-Lions spaces $L^{1,p}$, $1<p<+\infty$. We recall that, for any $D\in \mathcal{O}(\Omega)$,
$L^{1,p}(D)=\{u\in L^1_{loc}(D)\mid \nabla u\in L^p(\Omega)\}$, which is a Banach space if
we identify two elements of $L^{1,p}(D)$ having the same gradient and if we endow it with the norm $\|u\|_{L^{1,p}(D)}=\|\nabla u\|_{L^p(D)}$. Moreover, $L^{1,p}(D)$ can be identified with a closed subspace of $L^p(\Omega;\R^N)$ by associating to $u\in L^{1,p}(D)$ the function $\nabla u$, which is extended to $0$ outside $D$.

If $N=3$, $H(\mathrm{curl})(D)=\{\mathbf{u} \in L^2(D;\R^3)\mid \nabla\times \mathbf{u} \in L^2(D;\R^3)\}$ is a Banach space endowed with the norm
$\| \mathbf{u} \|^2_{H(\mathrm{curl})(D)}=\| \mathbf{u} \|^2_{L^2(D)}+\|\nabla\times \mathbf{u} \|^2_{L^2(D)}$. It can be seen as a closed subspace of $L^2(\Omega;\R^6)$ by associating to $\mathbf{u} \in H(\mathrm{curl})(D)$ the pair $(\mathbf{u},\nabla \times \mathbf{u})$, which are both extended to $0$ outside $D$.

We can define Mosco convergence of $L^{1,p}$ spaces or $H(\mathrm{curl})$ spaces as done for Sobolev spaces in 
Definition~\ref{Moscodefsob}.

In this respect we have the following two observations. First, all results of Proposition~\ref{mosco:m1} extend naturally to $L^{1,p}$, $1<p<+\infty$, and to $H(\mathrm{curl})$ spaces. Moreover, we have the following. Let $\Omega \subset\R^N$ be a bounded open set and let $\{ D_n \}_{n\in\N} \subset \mathcal{O}(\Omega)$ and $D\in\mathcal{O}(\Omega)$, then:

\begin{enumerate}[1)]
\item Let $1<p<+\infty$. If 
	$W^{1,p}(D_n)$, $n\in\N$, and $W^{1,p}(D)$
	satisfy condition (M2), then $L^{1,p}(D_n)$, $n\in\N$, and $L ^{1,p}(D)$ satisfy condition (M2) as well. In fact, it is enough to observe that, by a truncation argument, $W^{1,p}(D)$ is a dense subspace of  $L ^{1,p}(D)$.
\item If 
	$W^{1,2}(D_n)$, $n\in\N$, and $W^{1,2}(D)$
	satisfy condition (M2) and $W^{1,2}(D;\R^3)$ is dense in $H(\mathrm{curl})(D)$,
then $H(\mathrm{curl})(D_n)$, $n\in\N$, and $H(\mathrm{curl})(D)$ satisfy condition (M2) as well.
\end{enumerate}
\end{remark}

We would like to recall that Mosco convergence for Sobolev spaces when $p=2$ has been widely studied thanks to the close relationship between convergence in the sense of Mosco of the spaces $W^{1,2}$ and the stability property of weak solutions to Neumann problems for elliptic equations under variations of the domains, see \cite{chambolle:doveri}, \cite{bucur:varchon0}, \cite{bucur:varchon}, in particular \cite[Proposition~3.2 and Corollary~3.3]{bucur:varchon}. For the sake of completeness, we recall this characterisation by stating and proving a similar one that holds for any $p$, $1<p<+\infty$.

Let $1<p<+\infty$. For any $D \in \mathcal{O}(\Omega)$, any $f\in L^{p'}(D)$ and any $F\in L^{p'}(D;\R^N)$,  let $u=u(D,f,F)$ be the solution to the variational problem
  	\begin{multline}
	\label{variational:probgen}
		\min_{v \in W^{1,p}(D)} \mathcal{J}(v) \\= \min_{v \in W^{1,p}(D)}  \left( \frac{1}{p} \int_{D} \left(\abs{\nabla v}^p + \abs{v}^p\right) - \int_{\Omega} f v   - \int_{\Omega} F \cdot \nabla v \right).
	\end{multline}
	Such a problem has a unique solution $u\in W^{1,p}(D)$, which is characterised by solving, $\text{for any }\varphi \in W^{1,p}(D)$,  
	\begin{equation}
	\label{varprob:weakform} 
		\int_{D} \left(\abs{\nabla u}^{p-2} \nabla u \cdot \nabla \varphi + \abs{u}^{p-2} u \varphi \right)= \int_{D} \left(f \varphi + F \cdot \nabla \varphi\right).
	\end{equation}
	In other words, $u \in W^{1,p}(D)$ is the unique weak solution of the homogeneous Neumann problem
	$$
			 - \divergence(\abs{\nabla u}^{p-2} \nabla u) + \abs{u}^{p-2} u = f - \divergence(F) \quad \text{in }D.
	$$

We have the following result.

\begin{theorem}
\label{equiv:mosconeumann}
	Let $\{ D_n \}_{n\in\N} \subset \mathcal{O}(\Omega)$ and $D\in\mathcal{O}(\Omega)$. Let $1<p<+\infty$.
	Then the following holds.
	\begin{enumerate}[a\textnormal{)}]
		\item If $W^{1,p}(D_n)$ converges in the sense of Mosco to $W^{1,p}(D)$ as $n\to \infty$, then for any $f\in L^{p'}(\Omega)$ and any $F\in L^{p'}(\Omega;\R^N)$ one has that $u_n = u(D_n,f|_{D_n},F|_{D_n})$, solution to \eqref{variational:probgen} with $D_n$ in place of $D$, converges in $L^p(\Omega; \R^{N+1})$ to $u=u(D,f|_D,F|_D)$, solution to \eqref{variational:probgen}, namely $u_n\to u$ in $L^p(\Omega)$ and $\nabla u_n \to \nabla u$ in $L^p(\Omega;\R^N)$ \textnormal{(}with the usual extensions to $0$\textnormal{)}.
		\item Viceversa, if $D_n \to D$ in the Hausdorff complementary topology and if for any $f\in L^{p'}(\Omega)$ one has that $u_n = u(D_n,f|_{D_n},0)$ converges to $u=u(D,f|_D,0)$ in $L^p(\Omega; \R^{N+1})$, then  $W^{1,p}(D_n)$ converges to $W^{1,p}(D)$ in the sense of Mosco as $n\to \infty$.
	\end{enumerate}
\end{theorem}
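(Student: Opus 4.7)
My plan for part (a) is a standard direct-method argument. First I would test the weak formulation \eqref{varprob:weakform} against $\varphi=u_n$, apply H\"older and Young inequalities, and extract a uniform bound $\|u_n\|_{W^{1,p}(D_n)}\le C$, so that up to a subsequence $(u_n,\nabla u_n)$ converges weakly in $L^p(\Omega;\R^{N+1})$ to some $(u^\ast,V^\ast)$. Condition (M1) then forces $u^\ast\in W^{1,p}(D)$ with $V^\ast=\nabla u^\ast$. To identify $u^\ast$ as the minimizer $u$, I would pick any $w\in W^{1,p}(D)$, use (M2) to produce $w_n\in W^{1,p}(D_n)$ with $(w_n,\nabla w_n)\to(w,\nabla w)$ strongly, and pass to the limit in $\mathcal{J}_n(u_n)\le \mathcal{J}_n(w_n)$, combining weak lower semicontinuity of the $L^p$-norm part with weak continuity of the linear parts to obtain $\mathcal{J}(u^\ast)\le \mathcal{J}(w)$, whence $u^\ast=u$ by strict convexity. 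Specializing $w=u$ delivers $\mathcal{J}_n(u_n)\to \mathcal{J}(u)$; cancelling the linear pieces (which converge by weak convergence) yields $\|(u_n,\nabla u_n)\|_{L^p(\Omega;\R^{N+1})}\to \|(u,\nabla u)\|_{L^p(\Omega;\R^{N+1})}$, and uniform convexity of $L^p$ upgrades the weak convergence to strong convergence. The usual subsequence principle then gives convergence of the whole sequence.

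For part (b) I would treat (M1) and (M2) separately. The easy half, (M1), comes from the special choice $f\equiv 1$: the minimizer on either side is the constant $1$, so (with the usual extension) the element of $L^p(\Omega;\R^{N+1})$ associated with $u(D_n,1,0)$ is $(\mathbbm{1}_{D_n},0)$, and the assumed convergence reads $\mathbbm{1}_{D_n}\to\mathbbm{1}_D$ in $L^p(\Omega)$, i.e.\ $|D_n\Delta D|\to 0$. Combined with the Hausdorff complementary convergence, Proposition~\ref{mosco:m1}~i) delivers $A'\subset A$.

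The substantive step is (M2), and the plan is a nonlinear resolvent-density argument. The hypothesis guarantees that $T_D(f):=u(D,f|_D,0)$ lies in $A''$ for every $f\in L^{p'}(\Omega)$, since $T_{D_n}(f)\to T_D(f)$ strongly in $L^p(\Omega;\R^{N+1})$; since $A''$ is strongly closed it therefore suffices to show that $\mathcal{R}:=T_D(L^{p'}(D))$ is dense in $W^{1,p}(D)$. I would recast the Neumann problem as $\mathcal{A}(u)=f$, where $\mathcal{A}\colon W^{1,p}(D)\to (W^{1,p}(D))^\ast$ is the nonlinear operator defined by $\langle\mathcal{A}(u),\varphi\rangle=\int_D(|\nabla u|^{p-2}\nabla u\cdot\nabla \varphi+|u|^{p-2}u\,\varphi)$. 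Strict monotonicity, coercivity and continuity make $\mathcal{A}$ a bijection by Browder--Minty, and the uniform monotonicity estimates for the $p$-Laplacian (handled separately in the ranges $p\ge 2$ and $1<p<2$) make $\mathcal{A}^{-1}$ continuous. Hence $\mathcal{R}=\mathcal{A}^{-1}(L^{p'}(D))$, with $L^{p'}(D)$ injected into $(W^{1,p}(D))^\ast$ via $g\mapsto\bigl(\varphi\mapsto\int_D g\varphi\bigr)$. The density of this inclusion is a soft argument: by reflexivity and Hahn--Banach, it suffices to check that any $v\in (W^{1,p}(D))^{\ast\ast}\cong W^{1,p}(D)$ annihilating every such functional satisfies $\int_D g\,v=0$ for all $g\in L^{p'}(D)$, hence $v=0$. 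Pushing this dense set through the homeomorphism $\mathcal{A}^{-1}$ gives density of $\mathcal{R}$ in $W^{1,p}(D)$ and completes (M2).

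The main obstacle is the nonlinear piece of the last argument, namely the continuity of $\mathcal{A}^{-1}$: for $p=2$ this is classical Hilbert space theory and nothing more is needed, but for $p\neq 2$ it rests on the standard yet delicate uniform monotonicity inequalities for the $p$-Laplacian, which are the technical engine that makes the otherwise abstract density scheme work.
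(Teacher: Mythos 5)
Your proposal follows the same blueprint as the paper's proof: for (a), the direct method combined with (M1) to identify the weak limit as a competitor, (M2) to build recovery sequences, and the Radon--Riesz property of $L^p$ to upgrade weak convergence to strong; for (b), the $f\equiv 1$ trick for (M1) and the resolvent-density argument for (M2), with your $T_D$ playing exactly the role of the paper's $R_D$ and the density of $L^{p'}(D)$ in $(W^{1,p}(D))^*$ established via the same reflexivity/Hahn--Banach step. The one place where you add value is in making explicit the nonlinear machinery --- Browder--Minty surjectivity plus the uniform monotonicity estimates for the $p$-Laplacian giving continuity of $\mathcal{A}^{-1}$ --- that the paper compresses into the phrase ``it is not difficult to prove that $u_n\to u$ in $W^{1,p}(D)$.''
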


\begin{proof}
	For any $f\in L^{p'}(\Omega)$, any $F\in L^{p'}(\Omega;\R^N)$ and any $n\in\N$, we call
	\[ \mathcal{J}_n(v) = \frac{1}{p} \int_{D_n}\left( \abs{\nabla v}^p + \abs{v}^p\right) - \int_{D_n} f v-\int_{D_n}F\cdot \nabla v,\]
	and
\[	 \quad u_n = u(D_n,f|_{D_n},F|_{D_n}) =  \argmin_{v \in W^{1,p}(D_n)} \mathcal{J}_n(v), \]
and, correspondingly,
	\[	\mathcal{J}(v) = \frac{1}{p} \int_{D} \left(\abs{\nabla v}^p + \abs{v}^p \right) - \int_{D} f v-\int_{D}F\cdot \nabla v, \] 
	\[ u = u(D,f|_D,F|_D) =  \argmin_{v \in W^{1,p}(D)} \mathcal{J}(v). \]

We begin by proving \textit{a}). By the variational characterisation, it is easy to infer that, for a constant $C$ depending on $\|f\|_{L^{p'}(\Omega)}$, $\|F\|_{L^{p'}(\Omega)}$ and $p$ only, we have that $\norm{u_n}_{W^{1,p}(D_n)} \le C$ for every $n\in\N$, which means that the sequence $\{(u_n,\nabla u_n)\}$ is bounded in $L^p(\Omega; \R^{N+1})$. Hence, there exists $(v,V) \in L^p(\Omega; \R^{N+1})$ and a subsequence $(u_{n_k}, \nabla u_{n_k})$ such that $(u_{n_k}, \nabla u_{n_k})\rightharpoonup (v,V)$ weakly in $L^p(\Omega; \R^{N+1})$. By condition (M1) of Mosco convergence, we immediately deduce that $v\in W^{1,p}(D)$, with $V=\nabla v$ in $D$, and that $v$ and $V$ are $0$ outside $D$. We also obtain that, by lower semicontinuity, $J(v)\leq \liminf_k J_{n_k}(u_{n_k})$.

We now want to show that $v =u = u(D,f|_D,F|_D))$. Indeed, take any $w\in W^{1,p}(D)$, then, by condition (M2) of Mosco convergence, there exists a sequence $w_n \in W^{1,p}(D_n)$ such that $w_n \to w$ strongly in $L^p(\Omega; \R^{N+1})$. Then, by strong continuity of the functionals, we have that $\mathcal{J}_n(w_n) \to \mathcal{J}(w)$ as $n\to \infty$. Therefore, using the minimisation property satisfied by $u_n$, we infer that
$$J(v)\leq \liminf_k J_{n_k}(u_{n_k})\leq \liminf_k J_{n_k}(w_{n_k}) = J(w).$$
By uniqueness of the solution to the minimisation problem \eqref{variational:probgen}, we conclude that 
 $v =u = u(D,f|_D,F|_D)$.

 Since $u_n$ and $u$ are solutions to the minimisation problems, by using the weak formulation \eqref{varprob:weakform}, testing with $\varphi_n = u_n$ and $\varphi = u$ respectively, and exploiting the weak convergence $u_{n_k} \rightharpoonup u$, we obtain that
 $$\lim_k\|(u_{n_k},\nabla u_{n_k})\|_{L^p(\Omega;\R^{N+1})}=\|(u,\nabla u)\|_{L^p(\Omega;\R^{N+1})},$$
 hence $(u_{n_k},\nabla u_{n_k}) \to (u,\nabla u)$ strongly in $L^p(\Omega;\R^{N+1})$ as $k\to\infty$. By Urysohn, we conclude that the whole sequence $(u_{n},\nabla u_{n}) \to (u,\nabla u)$ strongly in $L^p(\Omega;\R^{N+1})$ as $n\to\infty$.
 		
			Now we proceed to prove \textit{b}).
		To show condition (M1), we apply Proposition~\ref{mosco:m1}. By hypothesis, we already have that $D_n \to D$ in the Hausdorff complementary topology, then we just have to show that $|D_n \setminus D | \to 0$ as $n\to \infty$. Consider $f \equiv 1 \in L^{p'}(\Omega)$ and the corresponding solutions $u_n=u(K_n,1,0)$ and $u=u(K,1,0)$ to \eqref{variational:probgen}. By uniqueness of the solution, we have that $u \equiv 1$ in $D$ and $u_n \equiv 1$ in $D_n$, for any $n\in\N$. Additionally, by hypothesis, we know that $u_n \to u$ in $L^p(\Omega)$, with the usual estensions to $0$ outside $D_n$ and $D$, respectively. It follows that
		\[	|D_n \Delta D| = \norm{u_n - u}_{L^p(\Omega)}^p \to 0 \quad \text{as } n \to \infty,	\]
hence $|D_n \setminus D | \to 0$ as $n\to \infty$ and condition (M1) is proved.
		
		Next, we recall that it is enough to show condition (M2) only on a dense subset of $W^{1,p}(D)$. Define the resolvent operator $R_D: L^{p'}(D) \to W^{1,p}(D)$ such that $R_D(f) = u(D,f,0)$, solution to \eqref{variational:probgen}. Then, if we can show that $R_D(L^{p'}(D))$ is dense in $W^{1,p}(D)$, condition (M2) easily follows. Indeed, take $u \in R_D(L^{p'}(D))$, then $u=u(D,f,0)$ for a certain $f\in L^{p'}(D)$. Now, extend $f$ to $0$ outside $D$ and consider the solutions $u_n = u(D_n,f|_{D_n},0) \in W^{1,p}(D_n)$ for every $n\in\N$. It follows directly from our hypothesis that $(u_n, \nabla u_n) \to (u, \nabla u)$ in $L^p(\Omega;\R^{N+1})$, which proves (M2). 
		
		To show that $R_D(L^{p'}(D))$ is dense in $W^{1,p}(D)$, take $u\in W^{1,p}(D)$. Then, $u=u(D,f,F)$ where
		$f = \abs{u}^{p-2} u \in L^{p'}(D)$ and $F = \abs{\nabla u}^{p-2} \nabla u \in L^{p'}(D;\R^N)$. We call $F^*=(f,F)\in (W^{1,p}(D))^*$ the corresponding element of the dual. Since $W^{1,p}(D)$ is dense in $L^p(D)$, 
		$L^{p'}(D)$ is dense in $(W^{1,p}(D))^*$, thus there exists $\{f_n \}_{n\in\N} \subset L^{p'}(D)$ such that $(f_n,0) \to F^*$ in $(W^{1,p}(D))^*$. As a consequence, $u_n = u(D,f_n,0) \in R_D(L^{p'}(D))$ for every $n\in\N$ and it is not difficult to prove that $u_n \to u$ in $W^{1,p}(D)$.\end{proof}

\begin{corollary}
\label{convmosco:diffsimm}
	Let $\{ D_n \}_{n\in\N} \subset \mathcal{O}(\Omega)$ and $D\in\mathcal{O}(\Omega)$. Let $1<p<+\infty$. 
	If $W^{1,p}(D_n)$ converges to $W^{1,p}(D)$ in the sense of Mosco as $n\to \infty$, then one has that $| D_n \Delta D|\to 0$ as $n\to \infty$ and, in particular, $| D_n | \to |D|$ as $n\to\infty$.
\end{corollary}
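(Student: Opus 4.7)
The plan is to deduce this directly from part \textit{a}) of Theorem~\ref{equiv:mosconeumann} by testing with the constant data that makes the resolvent into a characteristic function. Specifically, I would take $f\equiv 1\in L^{p'}(\Omega)$ and $F\equiv 0\in L^{p'}(\Omega;\R^N)$, and look at the associated minimisers $u_n=u(D_n,1,0)$ and $u=u(D,1,0)$ from \eqref{variational:probgen}. By uniqueness for the corresponding Neumann problem (which in this case reduces to $-\divergence(|\nabla v|^{p-2}\nabla v)+|v|^{p-2}v=1$ with no boundary data), the constant function $v\equiv 1$ solves the minimisation on any open set, so that $u_n\equiv 1$ on $D_n$ and $u\equiv 1$ on $D$. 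With the convention of extending by $0$ outside the respective domains (as in the definition of the isometric embedding into $L^p(\Omega;\R^{N+1})$), one therefore has $u_n=\mathbbm{1}_{D_n}$ and $u=\mathbbm{1}_D$ as elements of $L^p(\Omega)$.

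By Mosco convergence and part \textit{a}) of Theorem~\ref{equiv:mosconeumann}, $u_n\to u$ strongly in $L^p(\Omega)$. Since $|\mathbbm{1}_{D_n}(x)-\mathbbm{1}_D(x)|^p$ equals $1$ on $D_n\Delta D$ and $0$ elsewhere, this gives
\[
|D_n\Delta D|=\int_\Omega|\mathbbm{1}_{D_n}-\mathbbm{1}_D|^p=\|u_n-u\|_{L^p(\Omega)}^p\longrightarrow 0
\]
as $n\to\infty$, which is the first assertion. The second follows immediately from the elementary inequality $\bigl||D_n|-|D|\bigr|\leq|D_n\Delta D|$.

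There is essentially no obstacle here: the whole point is that Theorem~\ref{equiv:mosconeumann}~\textit{a}) has already done the work, and the trick is simply to pick the right test datum. The argument is in fact identical in spirit to the one used in the proof of part \textit{b}) of the theorem, where the same choice $f\equiv 1$ was exploited to recover $|D_n\Delta D|\to 0$ from convergence of resolvents; here we are using the converse implication provided by part \textit{a}).
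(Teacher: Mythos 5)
Your proof is correct and matches the paper's argument exactly: the paper's proof of this corollary simply invokes part \textit{a}) of Theorem~\ref{equiv:mosconeumann} together with the $f\equiv 1$, $F\equiv 0$ computation already carried out inside the proof of part \textit{b}), which gives $|D_n\Delta D|=\|u_n-u\|_{L^p(\Omega)}^p\to 0$. Nothing to add.
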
	

\begin{proof} By using part \textit{a}) of the previous theorem, one can use the argument with $f \equiv 1$ used in the proof of part \textit{b}) of the same theorem.
\end{proof}

\section{Classes of hypersurfaces and Mosco convergence}\label{Moscosec}

Let us fix $N \ge 2$ and $\Omega \subset \R^N$ an open and bounded set. In the notation that we will use to name the following classes of compact sets, we will suppress the dependence on $\Omega$, since here it is considered fixed, but they will clearly depend on it. We begin with the following definition of Lipschitz hypersurfaces.

\begin{definition}
\label{ipersup:lip}
	We say that $K\in \mathcal{K}(\overline{\Omega})$ is a (\emph{compact}) \emph{Lipschitz hypersurface with constants} $r>0$ \emph{and} $L>0$, with or without boundary, if for any $x\in K$ there exists a bi-Lipschitz map $\Phi_x : B_r(x) \subset \R^N \to \R^N$ such that
	\begin{enumerate}[(a)]
		\item $L^{-1} \abs{z_1 - z_2} \le \abs{ \Phi_x(z_1) - \Phi_x(z_2) } \le L \abs{z_1 - z_2} \quad \text{for any } z_1, z_2 \in B_r(x)$.
		\item $\Phi_x(x) = 0$ and $\Phi_x(K\cap B_r(x)) \subset \pi := \{ y\in\R^N \mid y_N = 0 \}$. 
	\end{enumerate}
	Moreover, for such $K$ we say that $x\in K$ \emph{belongs to the interior of} $K$ if there exists $ 
	\delta >0$ such that $B_{\delta}(0) \cap \pi \subset \Phi_x(K \cap B_r(x))$ and we define:
	\[	\bdry(K) := \{ x\in K \mid x \text{ does not belong to the interior of $K$}\}	\]
\end{definition}

\begin{remark}\label{bdrychar}
	It is easy to prove that there exists $C_1=C_1(r,L,\Omega)>0$ such that for any Lipschitz hypersurface $K$
\begin{equation}\label{n-1bound}
	\mathcal{H}^{N-1}(K) \le C_1 < + \infty,
	\end{equation}
	 and that $\bdry(K)$ is compact. Moreover, we have that $\Phi_x(B_r(x))$ is an open set containing $B_{r/L}$. We also note that if $z\in B_{r/L}$ belongs to the boundary of $\Phi_x(K\cap B_r(x))$ as a subset of $\pi$, then $y=\Phi_x^{-1}(z)$ belongs to $\bdry(K)$.
\end{remark}

\begin{remark}\label{stabab} Let us note that Lipschitz hypersurfaces of Definition~\ref{ipersup:lip} are stable with respect to Hausdorff convergence. In fact,
let $K_n$, $n\in\N$, satisfy conditions (a) and (b) and let $K$ be the limit of $K_n$ in the Hausdorff distance. Let $x\in K$ and let $x_n\in K_n$ be such that $x_n\to x$ as $n\to \infty$.
	Observe that, up to a translation, the maps $\Phi_{x_n}^n$ can be rewritten as $\Phi_{x_n}^n = \tilde{\Phi}_n(\cdot - x_n)$ with $\tilde{\Phi}_n \colon \overline{B_r(0)} \to \R^N$ bi-Lipschitz with constant $L$. Then, by Ascoli-Arzelà theorem, we have that, up to a subsequence, $\tilde{\Phi}_n \to \tilde{\Phi}$ uniformly with $\tilde{\Phi} \colon \overline{B_r(0)} \to \R^N$ bi-Lipschitz with constant $L$. Now define $\Phi_x := \tilde{\Phi}(\cdot - x)$. Clearly $\Phi_x$ satisfies (a) and, since $\tilde{\Phi}_n(0) = 0$ for every $n\in \N$, also $\Phi_x(x)=0$. In order to show (b), we only need to prove that $\Phi_x(K\cap B_r(x)) \subset \pi$. Indeed, let $y\in K\cap B_r(x)$, then, by Hausdorff convergence, there exist $y_n \in K_n$ such that $y_n \to y$. In particular, there exists $\bar{n}\in\N$ such that, for every $n\ge \bar{n}$, $y_n \in B_r(x_n) \cap K_n$, so that $\tilde{\Phi}_n(y_n - x_n)\in \pi$. Now observe that, by uniform convergence, $\tilde{\Phi}_n(y_n - x_n) \to \tilde{\Phi}(y-x)$, therefore $\Phi_x(y) = \tilde{\Phi}(y-x) \in \pi$ and (b) is proved.  
\end{remark}

In order to introduce some regularity on the boundary of a Lipschitz hypersurface $K$, we recall the following definition from  
\cite{rondi:ac}.

\begin{definition}\label{mrclassdef}
	A Lipschitz hypersurface $K\in\mathcal{K}(\overline{\Omega})$ belongs to the \emph{class} $\mathcal{MR}(r,L)$ with parameters $r>0$ and $L>0$, if, in addition to hypotheses (a) and (b) of Definition \ref{ipersup:lip}, it satisfies
	\begin{enumerate}[(a)]\setcounter{enumi}{2}
		\item for any $x\in \bdry(K)$ one has that $\Phi_x (K\cap B_r(x)) = \Phi_x(B_r(x)) \cap \pi^+$, where we set $\pi^+ := \{ y \in \R^N \mid y_N = 0,\ y_{N-1} \ge 0 \}$.
	\end{enumerate}
\end{definition}

It is easy to prove that there exists $C_2=C_2(r,L,\Omega)>0$ such that, for any $K\in \mathcal{MR}(r,L)$,
\begin{equation}\label{n-2bound}
\mathcal{H}^{N-2}(\bdry(K)) \le C_2 < + \infty.
\end{equation}

We generalise the $\mathcal{MR}(r,L)$ class in the following way.
\begin{definition}
\label{classe:finale1}
	A Lipschitz hypersurface $K\in\mathcal{K}(\overline{\Omega})$ belongs to the \emph{class} $\mathcal{FR}(r,L,M_0)$ with parameters $r>0$, $L>0$ and $M_0\in\N$, if, in addition to hypotheses (a) and (b) of Definition \ref{ipersup:lip}, we have that
$K=\bigcup_{j=1}^J H^j$, where $H^j\in \mathcal{MR}(r,L)$ for every $j=1,\ldots,J$ and $1\leq J\le M_0$.

	Moreover, for such $K$ we define the \emph{set of singular points} as:
	\[	\sing(K) := \bigcup_{j=1}^J \bdry(H^j).	\]
\end{definition}

We point out that the previous definition states that $K\in \mathcal{FR}(r,L,M_0)$ is globally a Lipschitz hypersurface as in Definition \ref{ipersup:lip}, but every single piece of the decomposition $K=\bigcup_{j=1}^J H^j$ belongs to the class $\mathcal{MR}(r,L)$. In particular, note that if $x\in H^j$, the map $\Phi_x$ which satisfies hypothesis (a) and (b) globally for $K$ and the one that satisfies hypotheses (a), (b) and (c) for $H^j$ could be different. 

\begin{remark}	An important remark is that the decomposition, or representation, of $K$ in terms of the sets $H^j\in \mathcal{MR}(r,L)$ is not unique. In particular, 
	since $K \in \mathcal{FR}(r,L,M_0)$ is globally a Lipschitz hypersurface, $\bdry(K)$ is well defined as in Definition \ref{ipersup:lip} and depends only on $K$. On the other hand, the set $\sing(K)$ is not intrinsically defined by $K$, but it strongly depends on the particular choice of the decomposition. However, it is always true that $\bdry(K) \subset \sing(K)$ and $\sing(K)$ is compact.
\end{remark}

By \eqref{n-1bound} and \eqref{n-2bound}, we infer that, for any $K\in \mathcal{FR}(r,L,M_0)$, one has
	\[  \mathcal{H}^{N-1}(K) \le C_1 < + \infty \quad \text{and} \quad \mathcal{H}^{N-2}(\sing(K)) \le M_0C_2 < + \infty,	\]
	independently on the representation.

We study the closure, and thus the compactness, of class $\mathcal{FR}(r,L,M_0)$ with respect to convergence in the Hausdorff distance. The interesting result here is that, up to subsequences, we also have convergence of the corresponding sets of singular points.

\begin{proposition}
\label{convh:finale1}
	Let $r>0$, $L>0$ and $M_0\in\N$ and let $\{ K_n\}_{n\in\N} \subset \mathcal{FR}(r,L,M_0)$, with given decompositions.
	 Assume that $K_n \to K$ in the Hausdorff distance as $n\to\infty$, with $K\in\mathcal{K}(\overline{\Omega})$.  
	Then, $K\in \mathcal{FR}(r,L,M_0)$ and, up to a subsequence, one has that, for a suitable decomposition of $K$, $\sing(K_n) \to \sing(K)$ in the Hausdorff distance as $n\to\infty$.
\end{proposition}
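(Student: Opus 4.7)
The plan is to extract subsequences of both the pieces of each decomposition and of their boundary sets, then identify the limits geometrically. Since $J_n \le M_0$ I can first pass to a subsequence on which $J_n = J$ is constant, with $1 \le J \le M_0$. Applying the compactness of $(\mathcal{K}(\overline{\Omega}), d_H)$ finitely many times, I extract a further subsequence along which, simultaneously for each $j = 1, \ldots, J$, $H_n^j \to H^j$ and $\bdry(H_n^j) \to B^j$ in the Hausdorff distance, with $H^j, B^j \in \mathcal{K}(\overline{\Omega})$ and $B^j \subset H^j$. Continuity of the finite union operation under $d_H$ and the uniqueness of the limit $K$ force $K = \bigcup_{j=1}^J H^j$. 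By Remark~\ref{stabab} each $H^j$ satisfies (a) and (b) of Definition~\ref{ipersup:lip} with constants $r, L$, as does $K$.

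It then remains to check that $H^j \in \mathcal{MR}(r,L)$ and that, for this decomposition, $B^j = \bdry(H^j)$; once this is done, the identity $\sing(K_n) = \bigcup_j \bdry(H_n^j)$, together with continuity of finite unions under $d_H$, gives $\sing(K_n) \to \bigcup_j \bdry(H^j) = \sing(K)$. For the inclusion $\bdry(H^j) \subset B^j$ I would argue by contradiction: if $x \in \bdry(H^j)$ and $\dist(x, \bdry(H_{n_k}^j)) > c > 0$ along a subsequence, pick $x_{n_k} \in H_{n_k}^j$ with $x_{n_k} \to x$. Writing the $\mathcal{MR}(r,L)$ chart at $x_{n_k}$ as $\Phi_{x_{n_k}}^{n_k} = \tilde{\Phi}_{n_k}(\cdot - x_{n_k})$, the ball $B_{c/2}(x_{n_k})$ contains no points of $\bdry(H_{n_k}^j)$, hence, by Remark~\ref{bdrychar} applied in the coordinates of $\tilde{\Phi}_{n_k}$, no point of $B_{c/(2L)}(0) \cap \pi$ can lie on the boundary in $\pi$ of $\tilde{\Phi}_{n_k}((H_{n_k}^j - x_{n_k}) \cap B_r(0))$. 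Since $0$ lies in this set, connectedness of $B_{c/(2L)}(0) \cap \pi$ yields the uniform inclusion $B_{c/(2L)}(0) \cap \pi \subset \tilde{\Phi}_{n_k}((H_{n_k}^j - x_{n_k}) \cap B_r(0))$. Extracting a uniformly convergent subsequence $\tilde{\Phi}_{n_k} \to \tilde{\Phi}$ as in Remark~\ref{stabab} and setting $\Phi_x = \tilde{\Phi}(\cdot - x)$, the same inclusion persists in the limit, so $x$ is interior to $H^j$, a contradiction.

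For the reverse inclusion $B^j \subset \bdry(H^j)$, combined with condition (c) of Definition~\ref{mrclassdef} for $H^j$, I would take $x_{n_k} \in \bdry(H_{n_k}^j)$ with $x_{n_k} \to x$ and apply the $\mathcal{MR}(r,L)$ structure to obtain $\tilde{\Phi}_{n_k}$ with $\tilde{\Phi}_{n_k}((H_{n_k}^j - x_{n_k}) \cap B_r(0)) = \tilde{\Phi}_{n_k}(B_r(0)) \cap \pi^+$. Passing to a uniformly convergent subsequence, the limit map $\tilde{\Phi}$, together with the Hausdorff convergence $H_{n_k}^j - x_{n_k} \to H^j - x$, should yield the analogous identity $\tilde{\Phi}((H^j - x) \cap B_r(0)) = \tilde{\Phi}(B_r(0)) \cap \pi^+$, which simultaneously certifies (c) for $H^j$ at $x$ and shows $x \in \bdry(H^j)$. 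The main obstacle lies precisely in this limiting identity: one inclusion uses uniform convergence of $\tilde{\Phi}_{n_k}^{-1}$ to produce preimages in $H^j$ of any target in $\tilde{\Phi}(B_r(0)) \cap \pi^+$ (exactly as in the proof of step (a) in Remark~\ref{stabab}, combined with Hausdorff convergence of the hypersurfaces), while the other uses Hausdorff convergence of $H_{n_k}^j$ and the closedness of $\pi^+$. This identification of $B^j$ with $\bdry(H^j)$ is the technical heart of the argument, and it simultaneously delivers $H^j \in \mathcal{MR}(r,L)$, hence $K \in \mathcal{FR}(r,L,M_0)$, and the convergence of the singular sets.
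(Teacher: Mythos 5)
Your argument is correct and follows the same overall strategy as the paper: pass to a subsequence along which the number $J$ of $\mathcal{MR}(r,L)$ pieces is constant, extract further subsequences so that both each $H_n^j$ and each $\bdry(H_n^j)$ converge in the Hausdorff distance, identify the limits, and assemble the decomposition of $K$. The only genuine difference is that you prove from scratch the key closure fact for the class $\mathcal{MR}(r,L)$ --- namely that the limit $H^j$ belongs to $\mathcal{MR}(r,L)$ and that $\bdry(H_n^j) \to \bdry(H^j)$ --- through the two-inclusion argument using the charts $\tilde{\Phi}_{n_k}$, the connectedness argument via Remark~\ref{bdrychar}, and the passage to the limit in condition~(c). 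The paper outsources exactly this step to \cite[Lemma~3.6]{rondi:ac}, so your proposal is a self-contained expansion of the same route rather than a different one, and your treatment of the two inclusions $\bdry(H^j)\subset B^j$ and $B^j \subset \bdry(H^j)$ is sound.
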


\begin{proof}
	Let $\{ K_n\}_{n\in\N} \subset \mathcal{FR}(r,L,M_0)$ be such that $K_n \to K$ in the Hausdorff distance. By Remark~\ref{stabab}, we know that $K$ satisfies conditions (a) and (b) of Definition~\ref{ipersup:lip}.
		
	Since the number of components of the decomposition of $K_n$ is limited by $M_0$ for any $n\in\N$, it is possible to find a subsequence in which the number of components $J$ is the same for every $n\in\N$. Without relabeling, we have that $K_n = \bigcup_{j=1}^J H_n^j$ with $H_n^j\in\mathcal{MR}(r,L)$ for every $j=1,\ldots,J$ and $n\in\N$. Now, by \cite[Lemma~3.6]{rondi:ac}, up to passing to a subsequence, we have that $H_n^j\to H^j\in\mathcal{MR}(r,L)$ and $\bdry(H_n^j)\to \bdry(H^j)$ in the Hausdorff distance for every $j=1,\ldots,J$. Then, it immediately follows that $K=\bigcup_{j=1}^J H^j$, which implies $K\in\mathcal{FR}(r,L,M_0)$, and that $\sing(K_n)\to\sing(K)$ in the Hausdorff distance.
\end{proof}

Following the idea in \cite{rondi:ac}, where hypersurfaces belonging to $\mathcal{MR}(r,L)$ where suitably glued along their boundaries, we consider sets where hypersurfaces belonging to $\mathcal{FR}(r,L,M_0)$ can be analogously glued together, this time along their singular sets rather than along their boundaries.

\begin{definition}
\label{classe:finale2}
	We say that a compact set $K\in\mathcal{K}(\overline{\Omega})$ belongs to the class $\mathcal{FR}(r,L,M_0,\omega)$ with parameters $r>0$, $L>0$, $M_0\in\N$ and $\omega:(0,+\infty)\to(0,+\infty)$ modulus of continuity, if $K$ satisfies
	\begin{enumerate}[(1)]
		\item $K=\bigcup_{i=1}^M K^i$, where $K^i\in \mathcal{FR}(r,L,M_0)$ for every $i=1,\ldots,M$, for some $M\in\N$.
		\item For any $i=1,\ldots,M$ and $x\in K^i$, if $\dist(x,\sing(K^i)) = \delta > 0$, then it follows that $\dist(x,\bigcup_{j\neq i} K^j) \ge \omega(\delta)$.
	\end{enumerate}
	Moreover, we define $\sing(K) := \bigcup_{i=1}^M \sing(K^i)$.
\end{definition}

We remark that, as before, the decomposition, or representation, of $K$ as union of $K^i\in \mathcal{FR}(r,L,M_0)$ is not at all unique, as well as the decomposition of each $K^i$ into its $\mathcal{MR}(r,L)$ components. Therefore $\sing(K)$ is not intrinsically defined and strongly depends on the chosen representation of $K$ in terms of $K^i$ and, in turn, of $K^i$ in terms of its $\mathcal{MR}(r,L)$ components.

We note that, given a representation of $K$, for any $i,j=1,\ldots,M$ with $i\ne j$ one has that $K^i \cap K^j \subset \sing(K^i) \cap \sing(K^j)$, which means that two different hypersurfaces in the decomposition of $K\in\mathcal{FR}(r,L,M_0,\omega)$ are either disjoint or can intersect only along the sets of singular points.

Definition~\ref{classe:finale1} does not guarantee any control on the intersection of the various pieces $H^j$, which means that the set $\sing(K)$ can be difficult to describe in general. This is clearly true also for Definition~\ref{classe:finale2}.  However, a basic property like the following still holds.

\begin{lemma}
\label{raggio:finale}
	Let $r>0$, $L>0$, $M_0\in\N$ and $\omega$ be a modulus of continuity and let $K\in \mathcal{FR}(r,L,M_0,\omega)$. Then for any $x\in K$ and any $\bar{r}>0$ there exists
$\bar{s}>0$, depending only on $r$, $L$, $M_0$, $\omega$ and $\bar{r}$, and $y\in B_{\bar{r}/2}(x)\cap K$ such that $\dist(y,\sing(K)) \ge \bar{s}$.
\end{lemma}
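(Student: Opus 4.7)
The plan is to reduce the claim, via condition (2) of Definition~\ref{classe:finale2}, to an analogous statement inside a single $K^{i_0}\in\mathcal{FR}(r,L,M_0)$, and then to prove that single‑component statement by contradiction, using the compactness result of Proposition~\ref{convh:finale1}.

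First, write $K=\bigcup_{i=1}^M K^i$ and pick $i_0$ with $x\in K^{i_0}$. Suppose we have proved the following single‑component claim: there exist $\bar s_1=\bar s_1(r,L,M_0,\bar r)>0$ and $y\in K^{i_0}\cap B_{\bar r/2}(x)$ with $\dist(y,\sing(K^{i_0}))\ge\bar s_1$. Applying condition (2) of Definition~\ref{classe:finale2} at $y\in K^{i_0}$ gives $\dist(y,K^j)\ge\omega(\bar s_1)$ for every $j\ne i_0$, and since $\sing(K^j)\subset K^j$, also $\dist(y,\sing(K^j))\ge\omega(\bar s_1)$. Therefore $\dist(y,\sing(K))\ge\min\{\bar s_1,\omega(\bar s_1)\}=:\bar s$, which depends only on $r,L,M_0,\omega,\bar r$, as required.

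The single‑component claim is proved by contradiction. If it fails, for each $n\in\N$ there exist $K^{(n)}\in\mathcal{FR}(r,L,M_0)$ and $x_n\in K^{(n)}$ such that every $y\in K^{(n)}\cap B_{\bar r/2}(x_n)$ satisfies $\dist(y,\sing(K^{(n)}))<1/n$. By compactness of $(\mathcal{K}(\overline\Omega),d_H)$ and by Proposition~\ref{convh:finale1}, passing to a subsequence we may assume that $K^{(n)}\to K^\infty\in\mathcal{FR}(r,L,M_0)$ in the Hausdorff distance, that for a suitable decomposition of $K^\infty$ also $\sing(K^{(n)})\to\sing(K^\infty)$ in the Hausdorff distance, and that $x_n\to x_\infty\in K^\infty$. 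For any $y\in K^\infty\cap B_{\bar r/4}(x_\infty)$ one can choose $y_n\in K^{(n)}$ with $y_n\to y$; for $n$ large, $y_n\in B_{\bar r/2}(x_n)$, hence by hypothesis there is $z_n\in\sing(K^{(n)})$ with $|y_n-z_n|<1/n$, so $z_n\to y$ and the Hausdorff convergence of the singular sets yields $y\in\sing(K^\infty)$. Consequently $K^\infty\cap B_{\bar r/4}(x_\infty)\subseteq\sing(K^\infty)$.

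The contradiction now comes from comparing $(N-1)$- and $(N-2)$-dimensional Hausdorff measures. Since $\sing(K^\infty)=\bigcup_j\bdry(H^j_\infty)$ is a union of at most $M_0$ sets each satisfying $\mathcal{H}^{N-2}(\bdry(H^j_\infty))\le C_2$ by~\eqref{n-2bound}, one has $\mathcal{H}^{N-1}(\sing(K^\infty))=0$. On the other hand, $x_\infty$ lies in some $H^{j_0}_\infty\in\mathcal{MR}(r,L)$, and the bi-Lipschitz chart of $H^{j_0}_\infty$—applied directly at $x_\infty$ when $x_\infty\in\bdry(H^{j_0}_\infty)$ (via condition (c) of Definition~\ref{mrclassdef}), or at a sufficiently close boundary point otherwise—shows that $H^{j_0}_\infty\cap B_{\bar r/4}(x_\infty)$ contains the bi-Lipschitz image of a subset of $\pi$ of positive $(N-1)$-dimensional Lebesgue measure, hence itself has positive $\mathcal{H}^{N-1}$-measure. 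This contradicts the inclusion just derived.

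The main obstacle is the last positive‑measure assertion when $x_\infty$ is interior to $H^{j_0}_\infty$ and far from $\bdry(H^{j_0}_\infty)$, since Definition~\ref{ipersup:lip} only supplies an unspecified $\delta>0$. The uniform bound is obtained through the observation that, whenever $\bdry(H^{j_0}_\infty)\cap B_\rho(x_\infty)=\emptyset$, the set $\Phi_{x_\infty}(H^{j_0}_\infty\cap B_\rho(x_\infty))$ is both open and closed in the relative topology of $\pi\cap\Phi_{x_\infty}(B_\rho(x_\infty))\supseteq\pi\cap B_{\rho/L}(0)$, so it contains the connected component of $0$ and hence a disc of radius $\rho/L$ in $\pi$; this is in the same spirit as the argument in Remark~\ref{stabab} together with Remark~\ref{bdrychar}.
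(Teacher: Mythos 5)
Your proof is correct, but it takes a genuinely different route from the paper's. The reduction from the full class $\mathcal{FR}(r,L,M_0,\omega)$ to the single component $K^{i_0}\in\mathcal{FR}(r,L,M_0)$, via condition (2) of Definition~\ref{classe:finale2} and the passage from $\bar s_1$ to $\min\{\bar s_1,\omega(\bar s_1)\}$, is exactly the paper's final step. But for the single-component claim the paper gives a direct, explicit construction: starting from $x\in H^1$ it iterates $J\le M_0$ times, at each step either staying put (if $\bdry(H^j)$ is far) or using a nearby point $z_j\in\bdry(H^j)$ together with condition (c) of Definition~\ref{mrclassdef} to relocate a bounded distance into the interior, losing a controlled factor of $64L^2$ at each step, and ends with the explicit value $\bar s=\bar r/(4(64L^2)^{M_0})$. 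You instead argue by contradiction using the compactness of $(\mathcal{K}(\overline\Omega),d_H)$ and Proposition~\ref{convh:finale1}, deduce that in the limit a relatively open piece of $K^\infty$ would have to lie inside $\sing(K^\infty)$, and derive a contradiction from $\mathcal{H}^{N-1}(\sing(K^\infty))=0$ versus $\mathcal{H}^{N-1}(H^{j_0}_\infty\cap B_{\bar r/4}(x_\infty))>0$.

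Both arguments are sound; the trade-offs are worth noting. The paper's construction is elementary and produces an explicit, $\Omega$-independent $\bar s$; yours is softer (no explicit constant, and the $\bar s_1$ it produces a priori depends on $\Omega$ through the compactness of $\mathcal{K}(\overline\Omega)$, which is harmless since $\Omega$ is fixed throughout Section~\ref{Moscosec}, but is a real loss of quantitative information). The one step that deserves care in your version is the assertion that, away from $\bdry(H^{j_0}_\infty)$, the chart image $\Phi_{x_\infty}(H^{j_0}_\infty\cap B_\rho(x_\infty))$ is relatively open in $\pi\cap\Phi_{x_\infty}(B_\rho(x_\infty))$; openness at a point $y$ interior to $H^{j_0}_\infty$ requires transporting the interiority from the chart $\Phi_y$ to the chart $\Phi_{x_\infty}$, which amounts to applying invariance of domain to the bi-Lipschitz transition map $\Phi_{x_\infty}\circ\Phi_y^{-1}$ restricted to $\pi$. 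This is correct but somewhat more than ``the same spirit as Remarks~\ref{stabab} and~\ref{bdrychar}'' suggests and ought to be spelled out; the paper's construction avoids this entirely by always relocating to a boundary point of $H^j$ and using condition (c), where the image of the half-plane $\pi^+$ is explicit. Apart from the desirability of making that invariance-of-domain step explicit, your proof is complete.
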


\begin{proof} We begin by proving this property for $K\in \mathcal{FR}(r,L,M_0)$.
Without loss of generality,
up to reordering the sets of its decomposition, we can assume that $x\in H^1$ and that $0<\bar{r}<r/2$. We call $x_0=x$ and $s_0=\bar{r}/4$.
 If $\bdry(H^1)\cap B_{s_0/4}(x_0)=\emptyset$, we call
$x_1=x_0$ and $s_1=s_0/4$. 
Otherwise, there exists $z_1 \in \bdry(H^1) \cap B_{s_0/4}(x_0)$ and it is clear that $B_{s_0/4}(z_1) \subset B_{s_0/2}(x_0)\subset B_{\bar{r}/4}(x)$. 
Now take the bi-Lipschitz map $\Phi_{z_1}^1 \colon B_r(z_1) \to \R^N$ defined on $H^1\in \mathcal{MR}(r,L)$ such that $\Phi_{z_1}^1(H^1 \cap B_r(z_1)) = \Phi_{z_1}^1(B_r(z_1)) \cap \pi^+$. In particular, we have that $B_{s_0/(16L)} \cap \pi^+ \subset \Phi_{z_1}^1(H^1 \cap B_{s_0/16}(z_1))$. We can find $w_1\in B_{s_0/(16L)} \cap \pi^+$ such that $B_{s_0/(64L)}(w_1)\cap \pi \subset B_{s_0/(16L)}\cap \pi^+$. Therefore, by taking $x_1= (\Phi_{z_1}^1)^{-1}(w_1)$ and $s_1 = s_0/(64L^2)$, it follows that $B_{s_1}(x_1) \subset B_{s_0/2}(x_0)$ and $\bdry(H^1) \cap B_{s_1}(x_1) = \emptyset$. As a consequence, assuming, without loss of generality, that $L\geq 1$,
	for $s_1=s_0/(64L^2)$ there exists $x_1\in K$ such that $\dist(x_1,\bdry(H^1)) \ge s_1$ and $B_{s_1}(x_1)\subset B_{s_0/2}(x_0)$.

Then, we apply the exact same reasoning to the set $H^2$ and we can find, for  $s_2=s_1/(64L^2)$, $x_2\in K$ such that
$\dist(x_2,\bdry(H^2)) \ge s_2$ and $B_{s_2}(x_2)\subset B_{s_1/2}(x_1)$, hence we also have  $\dist(x_2,\bdry(H^2)\cup \bdry(H^1)) \ge s_2$.
 	 By repeating the previous argument up to $M_0$, we conclude that the thesis holds with $\bar{s}=\dfrac{\bar{r}}{4(64L^2)^{M_0}}$.
	 
	 For the general case, let $x\in K$, with $K\in \mathcal{FR}(r,L,M_0,\omega)$. Without loss of generality,
up to reordering the sets of its decomposition, we can assume that $x\in K^1$. We have just proved that there exists
$y\in B_{\bar{r}/2}(x)\cap K^1$ such that $\dist(y,\sing(K^1)) \ge \bar{s}$. We conclude that $\dist(y,\bigcup_{j\neq 1} K^j)\geq \omega(\bar{s})$ and the proof is concluded.
\end{proof}

\begin{remark}
\label{upperbound:finale}
	The number $M\in\N$ of components can depend on the particular $K\in\mathcal{FR}(r,L,M_0,\omega)$, however, using Lemma \ref{raggio:finale}, one can find a uniform upper bound $\overline{M}\in\N$, depending only on parameters $r$, $L$, $M_0$, $\omega$ and on $\Omega$, such that $M\le \overline{M}$ for any $K\in\mathcal{FR}(r,L,M_0,\omega)$.  Hence,
for any $K\in \mathcal{FR}(r,L,M_0,\omega)$ one has
	\[	\mathcal{H}^{N-1}(K) \le \overline{M}C_1 < + \infty \quad \text{and} \quad  \mathcal{H}^{N-2}(\sing(K)) \le \overline{M}M_0C_2 < + \infty,	\]
	independently on the representation.
	\end{remark}
	
We conclude by introducing the following two classes
\[ \widetilde{\mathcal{FR}}(r,L,M_0,\omega) := \{ K \in \mathcal{K}(\overline{\Omega}) \mid  \partial K \in \mathcal{FR}(r,L,M_0,\omega) \} \]
and
\[ \mathcal{O}(r,L,M_0,\omega) := \{ D \in \mathcal{O}(\Omega) \mid  \partial D \in \mathcal{FR}(r,L,M_0,\omega) \}. \]
	
The analogous of Proposition~\ref{convh:finale1} still holds for all these new classes.

\begin{proposition}
\label{convh:finale2}
	Let $r>0$, $L>0$, $M_0\in\N$ and $\omega:(0,+\infty)\to(0,+\infty)$ be a modulus of continuity.
	Let $\{ K_n\}_{n\in\N} \subset \mathcal{FR}(r,L,M_0,\omega)$, with given decompositions.
	 Assume that $K_n \to K$ in the Hausdorff distance as $n\to\infty$, with $K\in\mathcal{K}(\overline{\Omega})$.
	Then, $K\in \mathcal{FR}(r,L,M_0,\omega)$ and, up to a subsequence, one has that, for a suitable decomposition of $K$, $\sing(K_n) \to \sing(K)$ in the Hausdorff distance as $n\to\infty$.
	
	Moreover, the class $\widetilde{\mathcal{FR}}(r,L,M_0,\omega)$ is closed, thus compact, with respect to convergence in the Hausdorff distance and the class $\mathcal{O}(r,L,M_0,\omega)$ is also closed, thus compact, in the Hausdorff complementary topology.
\end{proposition}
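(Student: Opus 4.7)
The plan is to adapt the proof of Proposition~\ref{convh:finale1} to two nested levels of decomposition, and then handle the closure statements for $\widetilde{\mathcal{FR}}$ and $\mathcal{O}$ by a careful analysis of the topological boundary. By Remark~\ref{upperbound:finale}, the number $M$ of $\mathcal{FR}(r,L,M_0)$ components in the decomposition of each $K_n\in\mathcal{FR}(r,L,M_0,\omega)$ is uniformly bounded by some $\overline{M}\in\N$, so I first extract a subsequence along which $M$ is constant and write $K_n=\bigcup_{i=1}^M K_n^i$. Then, by compactness of $(\mathcal{K}(\overline\Omega),d_H)$ and a diagonal argument, I extract a further subsequence with $K_n^i\to K^i$ in the Hausdorff distance for every $i$, and apply Proposition~\ref{convh:finale1} to each index $i$ to conclude $K^i\in\mathcal{FR}(r,L,M_0)$ and, along a further subsequence, $\sing(K_n^i)\to\sing(K^i)$. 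Since finite unions are continuous under Hausdorff convergence, $K=\bigcup_i K^i$ and $\sing(K_n)=\bigcup_i\sing(K_n^i)\to\bigcup_i\sing(K^i)=\sing(K)$.

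To verify property (2) of Definition~\ref{classe:finale2} for the limit $K$, I fix $x\in K^i$ with $\dist(x,\sing(K^i))=\delta>0$, pick $x_n\in K_n^i$ with $x_n\to x$, and observe that $\delta_n:=\dist(x_n,\sing(K_n^i))\to\delta$ by continuity of the distance under Hausdorff convergence. The hypothesis $K_n\in\mathcal{FR}(r,L,M_0,\omega)$ yields $\dist(x_n,\bigcup_{j\ne i}K_n^j)\ge\omega(\delta_n)$; since $\omega$ is non-decreasing and left-continuous, $\liminf_n\omega(\delta_n)\ge\omega(\delta)$, so passing to the limit gives $\dist(x,\bigcup_{j\ne i}K^j)\ge\omega(\delta)$, completing the first part.

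For the closure of $\widetilde{\mathcal{FR}}(r,L,M_0,\omega)$, given $K_n\in\widetilde{\mathcal{FR}}$ with $K_n\to K$, a subsequence satisfies $\partial K_n\to\tilde K\in\mathcal{FR}(r,L,M_0,\omega)$ by the first part, and Remark~\ref{hausd:monotonia} gives $\partial K\subset\tilde K\subset K$. The main obstacle is the reverse inclusion $\tilde K\subset\partial K$. To establish it I fix $x\in\tilde K$ away from singular and boundary points of its decomposition (handling the remaining points by density, since $\partial K$ is closed), pick $x_n\in\partial K_n$ with $x_n\to x$, and exploit the local bi-Lipschitz flattening $\Phi_n$ of $\partial K_n$ at $x_n$, whose subsequential uniform convergence to a flattening $\Phi$ at $x$ is ensured by the argument of Remark~\ref{stabab}. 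The preimages under $\Phi_n$ of the two open half-balls $B_{r/L}\cap\{y_N>0\}$ and $B_{r/L}\cap\{y_N<0\}$ form two connected open sets $V_n^{\pm}$ disjoint from $\partial K_n$, so by connectedness each lies entirely in $K_n^c$ or entirely in $\intpart(K_n)$; since $x_n\in\partial K_n$, at least one of them, say $V_n^+$, lies in $K_n^c$. Then $p_n:=\Phi_n^{-1}(0,\dots,0,\delta/2)\in V_n^+$ satisfies $\dist(p_n,K_n)=\dist(p_n,\partial K_n)\ge\delta/(2L)$, and $p_n\to p:=\Phi^{-1}(0,\dots,0,\delta/2)$; Hausdorff convergence $K_n\to K$ then forces $p\notin K$, and since $|p-x|\le L\delta/2$ can be made arbitrarily small, $x\in\partial K$. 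Hence $\partial K=\tilde K\in\mathcal{FR}(r,L,M_0,\omega)$. The closure of $\mathcal{O}(r,L,M_0,\omega)$ follows analogously via Remark~\ref{setminus}, applied to the compact sets $K(D_n)\to K(D)$ and to the boundaries $\partial D_n\cup\partial\Omega$.
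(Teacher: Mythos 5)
Your first part (closedness of $\mathcal{FR}(r,L,M_0,\omega)$ and convergence of singular sets) is correct and follows the natural two-level adaptation of Proposition~\ref{convh:finale1}; the paper cites the analogous argument from \cite[Lemma~3.8]{rondi:ac} instead of spelling it out, so there is no real divergence there. For the closedness of $\widetilde{\mathcal{FR}}(r,L,M_0,\omega)$, however, you take a genuinely different route. The paper argues by contradiction: assuming $x\in\tilde K\setminus\partial K$, it uses Lemma~\ref{raggio:finale} to move from $x_n\in\partial K_n$ to a nearby point $y_n$ at a \emph{uniformly controlled} distance $\bar s$ from $\sing(\partial K_n)$, places a fixed-radius ball $B_{\bar s_1}(z_n)\subset B_{\bar s}(y_n)\setminus K_n$, and lets this ball converge inside the interior ball $B_d(x)\subset K$, a contradiction. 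You instead argue directly that every $x\in\tilde K\setminus\sing(\tilde K)$ lies in $\partial K$, then invoke density of $\tilde K\setminus\sing(\tilde K)$ in $\tilde K$ (which is indeed a consequence of Lemma~\ref{raggio:finale}). Both approaches work, but the paper's contradiction argument buys a cleaner proof: it invokes Lemma~\ref{raggio:finale} once to get a uniform good radius, whereas your direct argument forces you to track how the local flattening degenerates as $x$ approaches $\sing(\tilde K)$, and also to handle the density step separately.

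Two points in your argument need tightening. First, the map $\Phi_n$ you use is a flattening of the single component $K_n^i$ containing $x_n$, not of all of $\partial K_n$; the half-ball preimages $V_n^\pm$ are therefore a priori only disjoint from $K_n^i$. You must restrict to a ball of radius $<\omega(\dist(x_n,\sing(K_n^i)))$, where by property~(2) the set $\partial K_n$ coincides with $K_n^i$; you also need $\dist(x_n,\sing(\partial K_n))\to\dist(x,\sing(\tilde K))>0$, which requires the $\sing$-convergence from the first part along the chosen subsequence. Second, the half-ball lying in $K_n^c$ may alternate between $V_n^+$ and $V_n^-$ as $n$ varies, so you must pass to a further subsequence where the label is constant before asserting $p_n\to p$. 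These are fixable but not free.

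The closure of $\mathcal{O}(r,L,M_0,\omega)$ is where your sketch genuinely undershoots. Saying it "follows analogously via Remark~\ref{setminus} applied to $K(D_n)$ and $\partial D_n\cup\partial\Omega$" hides the real obstacle: $\partial K(D_n)=\partial D_n\cup\partial\Omega$, and $\partial\Omega$ is an arbitrary fixed boundary with no assumed regularity, so $K(D_n)$ need not belong to any $\widetilde{\mathcal{FR}}$ class. The paper's proof resolves this by enlarging the ambient set: with $\Omega\subset B_R$, it sets $K_n=\overline{B_{R+1}}\setminus D_n$ so that $\partial K_n=\partial D_n\cup\partial B_{R+1}$, where $\partial B_{R+1}$ is a smooth sphere at distance $\ge 1$ from $\partial D_n$ and hence compatible with an $\mathcal{FR}$-type description. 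That enlargement step is essential and should be made explicit in your argument; without it, the reduction to the $\widetilde{\mathcal{FR}}$ case does not go through.
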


\begin{proof} For the class $\mathcal{FR}(r,L,M_0,\omega)$, 
one can easily adapt the arguments of the proof of \cite[Lemma 3.8]{rondi:ac}, by using Propostition~\ref{convh:finale1} and substituting $\bdry(\cdot)$ with $\sing(\cdot)$. 

	Let $\{K_n\}_{n\in\N} \subset \widetilde{\mathcal{FR}}(r,L,M_0,\omega)$ be such that $K_n \to K$ in the Hausdorff distance.	
	We already know that, up to a subsequence, $\partial K_n \to \tilde{K} \in \mathcal{FR}(r,L,M_0,\omega)$ in the Hausdorff distance and, moreover, that $\partial K \subset \tilde{K} \subset K$ by Remark \ref{hausd:monotonia}. Therefore, we only need to show that $\tilde{K}=\partial K$.

By contradiction, suppose there exists $x\in \tilde{K} \setminus \partial K$. Then clearly $x$ is a point belonging to the interior of $K$, namely there exists $d>0$ such that $B_d(x) \subset K$. Moreover, by the convergence in the Hausdorff distance of $\partial K_n \to \tilde{K}$, there exists a sequence of points $x_n\in \partial K_n$ such that $x_n\to x$. Take $\bar{r}=d/4$ and let $y_n\in B_{\bar{r}/2}(x_n)\cap \partial K_n$ such that 
$\dist(y_n,\sing(\partial K_n)) \ge \bar{s}$
as in Lemma~\ref{raggio:finale}. We can find a radius $\bar{s}_1>0$, depending on $\bar{s}$ and $L$ only, and $z_n$ such that
$B_{\bar{s}_1}(z_n)\subset B_{\bar{s}}(y_n)\setminus K_n$. Up to a subsequence, $z_n\to z\in \overline{B_{d/2}(x)}$, therefore $z\in K$. Hence, there exists $w_n\in K_n$ such that $w_n\to z$. But, for $n$ large enough, $w_n\in B_{\bar{s}_1}(z_n)$ and we obtain a contradiction.

Finally, let $\{D_n\}_{n\in\N} \subset \mathcal{O}(r,L,M_0,\omega)$ be such that $D_n \to D$ in the Hausdorff complementary topology, that is, $\overline{\Omega}\setminus D_n\to \overline{\Omega}\setminus D$ in the Hausdorff distance. Assuming $\Omega\subset B_R$, for some $R>0$, we have that $D_n \to D$ in the Hausdorff complementary topology if and only if
$K_n=\overline{B_{R+1}}\setminus D_n$ converges to $K=\overline{B_{R+1}}\setminus D$. But, for any $n\in\N$, $K_n$ belongs to the class
$\widetilde{\mathcal{FR}}(r,L,M_0,\omega)$ when $\Omega$ is replaced by $B_{R+1}$. We conclude that $K$ belongs to the same class and thus $D\in \mathcal{O}(r,L,M_0,\omega)$.\end{proof}

Finally, we state and prove our Mosco convergence result.

\begin{theorem}
\label{teor:finale}
	Let $\Omega \subset \R^N$ be an open bounded set. Let $r>0$, $L>0$, $M_0\in\N$ and $\omega:(0,+\infty)\to(0,+\infty)$ be a modulus of continuity. Let $\{ D_n\}_{n\in\N} \subset \mathcal{O}(r,L,M_0,\omega)$ and $D\in \mathcal{O}(\Omega)$ be such that $D_n \to D$ in the Hausdorff complementary topology. Let $1 < p \le 2$.  \par
	Then $W^{1,p}(D_n)$ converges to $W^{1,p}(D)$ in the sense of Mosco as $n\to\infty$.
\end{theorem}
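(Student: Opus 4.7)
The plan is to verify the Mosco conditions (M1) and (M2) for $W^{1,p}(D_n)$ and $W^{1,p}(D)$ via Proposition~\ref{mosco:m1}(v) and (vi), thereby reducing them to statements about the hypersurfaces $\partial D_n$ themselves. By Proposition~\ref{convh:finale2} and the compactness of $(\mathcal{K}(\overline{\Omega}),d_H)$, up to passing to subsequences (which suffices for Mosco convergence by the usual Urysohn argument) I may assume that $\partial D_n\cup\partial\Omega\to \tilde K\cup\partial\Omega$ in the Hausdorff distance, for some $\tilde K\in\mathcal{FR}(r,L,M_0,\omega)$, and moreover, for a suitable decomposition, that $\sing(\partial D_n)\to\sing(\tilde K)$. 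Condition (M1) is then immediate from Proposition~\ref{mosco:m1}(v): Remark~\ref{upperbound:finale} gives $\mathcal{H}^{N-1}(\tilde K)<+\infty$, whence $|\tilde K\cap\Omega|=0$.

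For (M2), Proposition~\ref{mosco:m1}(vi) reduces the problem to proving (M2) for $W^{1,p}(\Omega\setminus\partial D_n)$ and $W^{1,p}(\Omega\setminus\tilde K)$: given $u$ in a suitable dense subset of $W^{1,p}(\Omega\setminus\tilde K)$, I must construct $u_n\in W^{1,p}(\Omega\setminus\partial D_n)$ with $u_n\to u$ strongly in $L^p(\Omega;\R^{N+1})$. Following the template of \cite{rondi:ac} (itself derived from \cite{giacomini}), I would cover $\tilde K$ by two families of balls: \emph{type I} balls kept at positive distance from $\sing(\tilde K)$, in which each $\mathcal{MR}(r,L)$ piece of $\tilde K$ and the corresponding piece of $\partial D_n$ are graphs of Lipschitz functions and, thanks to uniform Lipschitz bounds and Ascoli--Arzel\`a, can be transported onto each other by a bi-Lipschitz map converging uniformly to the identity, producing local approximants; and \emph{type II} balls centred near $\sing(\tilde K)$, where one exploits that $\mathcal{H}^{N-2}(\sing(\tilde K))<+\infty$ (Remark~\ref{upperbound:finale}) so that $\sing(\tilde K)$ has zero $W^{1,p}$-capacity for $1<p\le 2$, allowing $u$ to be $W^{1,p}$-approximated by functions vanishing in a small neighbourhood of $\sing(\tilde K)$. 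The Hausdorff convergence of the singular sets ensures that the same cutoff keeps one away from $\sing(\partial D_n)$ for large $n$, reducing back to the type I construction. A partition of unity glues the local approximants together, while the separation property~(2) of Definition~\ref{classe:finale2} guarantees that approximations performed near distinct components $K^i$ do not interfere.

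The main obstacle is the geometric bookkeeping for gluing along \emph{singular sets} rather than merely boundaries, in the presence of finitely many overlapping $\mathcal{MR}(r,L)$ pieces. On each type~I chart one must simultaneously straighten every relevant piece of $\partial D_n\cap B$ and of $\tilde K\cap B$ with uniform bi-Lipschitz constants, and verify that the resulting transport interacts cleanly with both the partition of unity and the capacitary cutoff near $\sing(\tilde K)$. The preparatory geometric work of Section~\ref{Moscosec}, in particular the Hausdorff stability of singular sets in Propositions~\ref{convh:finale1} and~\ref{convh:finale2} and the interior-point Lemma~\ref{raggio:finale}, is designed precisely for this task; once it is in place the analytic steps reduce to those already carried out in \cite{rondi:ac} for $\mathcal{MR}(r,L)$ and in \cite{giacomini} for $\mathcal{G}(r,L,C)$, both of which employ the same capacity argument and hence also require $p\le 2$.
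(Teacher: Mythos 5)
Your geometric and analytic core matches the paper's: you reduce to boundaries via Proposition~\ref{mosco:m1}, pass to a Hausdorff-convergent subsequence via Proposition~\ref{convh:finale2}, observe that $\mathcal{H}^{N-1}(\tilde K)<+\infty$ kills the measure of $\tilde K\cap\Omega$ for (M1), and for (M2) adapt the Giacomini--Rondi construction (type I / type II balls, capacity of the singular set for $1<p\le 2$, partition of unity). However there is one genuine gap in the bookkeeping: you carry out the reduction with $\Omega$ as the ambient set, so that for (M2) you must prove the corresponding condition for $W^{1,p}(\Omega\setminus\partial D_n)$ and $W^{1,p}(\Omega\setminus\tilde K)$. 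The hypersurfaces $\partial D_n$ and their limit $\tilde K$ live in $\overline\Omega$ and in general touch $\partial\Omega$, yet $\partial\Omega$ is a completely unconstrained set (the theorem allows an arbitrary bounded open $\Omega$). Your type I / type II ball covering of $\tilde K$ will include balls centred at, or near, points of $\partial\Omega$, and in those charts you have no control whatsoever: $\partial\Omega$ is fixed but not in the class $\mathcal{FR}$, so the straightening/transport construction and the capacitary cut-off have nothing to bite on there. Your write-up never addresses these points.

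The paper finesses exactly this by first proving the ball-ambient statement: with $\Omega\subset B_R$ it works with $B_{R+1}\setminus K_n$ and $B_{R+1}\setminus K$, so that the hypersurfaces stay at distance at least $1$ from the ambient boundary $\partial B_{R+1}$ and the covering balls used in the approximation never meet it. Only afterwards does it invoke Proposition~\ref{mosco:m1}(iv) and (vi) \emph{with $B_{R+1}$ as the ambient}, which yields Mosco convergence of $W^{1,p}(D_n)$ to $W^{1,p}(D)$ as subspaces of $L^p(B_{R+1};\R^{N+1})$, equivalent by zero extension to the statement in $L^p(\Omega;\R^{N+1})$. That this is not a cosmetic point is flagged by the remark immediately following the theorem: when one insists on working directly in an arbitrary $\Omega$ (the setting you implicitly adopt), an additional hypothesis $\mathcal{H}^{N-2}(\partial\Omega\cap\partial K)=0$ is required, precisely to handle the interaction between the moving hypersurfaces and the fixed, unconstrained boundary of $\Omega$. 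You should therefore replace the ambient $\Omega$ by $B_{R+1}$ (with $\Omega\subset B_R$) before running the reduction of Proposition~\ref{mosco:m1}; once that is done your argument aligns with the paper's.
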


\begin{proof} We begin with the following case. Let $K_n\in \mathcal{FR}(r,L,M_0,\omega)$ and assume that $\Omega\subset B_R$, for some $R>0$.
Let $\tilde{D}_n=B_{R+1}\setminus K_n$. We assume that $K_n\to K$ in the Hausdorff distance and call $\tilde{D}=B_{R+1}\setminus K$. We prove that
$W^{1,p}(\tilde{D}_n)$ converges to $W^{1,p}(\tilde{D})$ in the sense of Mosco as $n\to\infty$. Condition (M1) of Mosco convergence follows by point \textit{v}) of Proposition~\ref{mosco:m1}. About condition (M2), one can follow the argument of \cite[Theorem 3.9]{rondi:ac}, which is strongly based on the original argument developed in the proof of \cite[Theorem 4.2]{giacomini}. The key point of the proof of Theorem~3.9 in \cite{rondi:ac} was the fact that the boundaries of the chosen hypersurfaces have zero capacity, for $1<p\leq 2$. In our case, we just need to replace the boundary with the singular set. 

Then the proof can be concluded by using Proposition~\ref{mosco:m1} \textit{iv}) and \textit{vi}), since we now know that $W^{1,p}(B_{R+1} \setminus \partial D_n)$ converges in the sense of Mosco to $W^{1,p}(B_{R+1} \setminus \tilde{K})$, with $\tilde{K} \in \mathcal{FR}(r,L,M_0,\omega)$ such that $\partial D_n \to \tilde{K}$ in the Hausdorff distance.
\end{proof}

\begin{remark} For the sake of completeness, let us also mention the following result. Let $\Omega$ be any bounded open set.
Let $r>0$, $L>0$, $M_0\in\N$ and $\omega:(0,+\infty)\to(0,+\infty)$ be a modulus of continuity and let $\{ K_n\}_{n\in\N} \subset \widetilde{\mathcal{FR}}(r,L,M_0,\omega)$ and $K\in \mathcal{K}(\overline{\Omega})$
	 be such that $K_n \to K$ in the Hausdorff distance. We call $D_n=\Omega\setminus K_n$ and $D=\Omega\setminus K$.
	 Let $1 < p \le 2$.  
	Then, provided that $\mathcal{H}^{N-2}(\partial\Omega\cap \partial K)=0$, we have $W^{1,p}(D_n)$ converges to $W^{1,p}(D)$ in the sense of Mosco as $n\to\infty$. In fact, as before, 
condition (M1) of Mosco convergence follows by point \textit{v}) of Proposition~\ref{mosco:m1}, see also Remark~\ref{setminus}, and actually for (M1) there is no need to assume that $\mathcal{H}^{N-2}(\partial\Omega\cap \partial K)=0$. About condition (M2), we start with the sets $\tilde{D}_n=\Omega\backslash \partial K_n$. Then, since $\partial\Omega\cap \partial K$ has zero capacity, we can repeat the argument of Theorem~\ref{teor:finale}. 
Then, as before, we conclude by using Proposition~\ref{mosco:m1} \textit{iv}) and \textit{vi}).

The advantage of this case is that the boundary of $\Omega$, since it is fixed, can be arbitrary, the price to pay is the condition on $\partial\Omega\cap \partial K$ which can not be inferred from the other assumptions.
\end{remark}

\section{A sufficient condition for Sobolev inequalities for nonsmooth domains}\label{Sobolevsec}

We now study Sobolev inequalities. Remarks~\ref{Rellichremark} and \ref{higherMosco} are just two examples of the importance of establishing these inequalities, in particular with constants independent from the domain.

The main tool that we will use to prove the Sobolev inequality is the following version of a Friedrichs type inequality.
For any $u\in L^1_{loc}(\R^N)$ and any $x\in\R^N$, we call $u^+(x)$ and $u^-(x)$, respectively, the approximate $\lim\sup$ and $\lim\inf$ of $u$ at $x$, that is,
$$u^+(x):=\textrm{ap-}\limsup_{y\to x}u(y)=\inf\big\{t\mid \{u>t\}\text{ has density $0$ in }x\big\},$$
and 
$$u^-(x):=\textrm{ap-}\liminf_{y\to x}u(y)=\sup\big\{t\mid \{u>t\}\text{ has density $1$ in }x\big\},$$
see \cite[Definition~1.57]{braides} for details.
For any bounded open set $D$, we consider $\partial D$ as a measure space with the $\mathcal{H}^{N-1}$ measure.

We also use the following \emph{trace inequality}, which can be inferred by results in \cite{adams}. Let $ D\in\mathcal{A}(r,L,R)$. Let $1\leq p<+\infty$.
We consider the following cases
\begin{equation}\label{pqcases}
\left\{
\begin{array}{ll}
\text{a)} &\text{$1\leq p<N$ and $1\leq s\leq \dfrac{(N-1)p}{N-p}$;}\\
\text{b)} &\text{$p=N$ and $1\leq s<+\infty$;}\\
\text{c)} &\text{$p>N$ and $1\leq s\leq+\infty$.}
\end{array}\right.
\end{equation}
Then, there exists a constant $c_{p,s}$, depending on $p$, $s$, $r$, $L$ and $R$ only, such that
\begin{equation}\label{Besovimmersion}
\|u\|_{L^s(\partial D)}\leq c_{p,s}\|u \|_{W^{1,p}( D)}\quad\text{for any }u \in W^{1,p}( D).
\end{equation}

The Friedrichs type inequality is the following.

\begin{proposition} 
\label{friedrichs}
	Let $ D \subset \R^N$ be a bounded open set and let $u\in L^1_{loc}( D)$.
		Let $1\leq p<+\infty$ and assume that $\nabla u \in L^p( D; \R^N)$, that is, $u\in L^{1,p}( D)$. Consider $u$ extended to $0$ outside $ D$. 

		If
	\[ 1\leq p<N, \quad p^{\ast} = \frac{pN}{N-p} , \quad s = \frac{(N-1)p}{N-p}, \]
	then there exists $C_1=C_1(p)>0$, $C_1$ depending only on $p$, such that
	\[ \norm{u}_{L^{p^{\ast}}( D)} \le C_1 \bigl[ \norm{\nabla u}_{L^p( D)} + \norm{ \abs{u}^+ }_{L^s(\partial  D)} \bigr] . \]
	
	If $N\leq p<+\infty$, for any $1<\dfrac {N}{N-1}\leq q<+\infty$, let $1\leq p_1=\dfrac{Nq}{N+q}<N$ and
	$s=\dfrac{(N-1)p_1}{N-p_1}=\dfrac{(N-1)q}{N}$, then
	there exists $C_2=C_2(q)>0$, $C_2$ depending only on $q$, since $C_2(q)=C_1(p_1)$, such that
	\[ \norm{u}_{L^{q}( D)} \le C_2 \bigl[ \norm{\nabla u}_{L^{p_1}( D)} + \norm{ \abs{u}^+ }_{L^s(\partial  D)} \bigr] . \]
\end{proposition}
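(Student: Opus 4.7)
The strategy is a two-step argument: handle the case $p=1$ directly from Maz'ya's isoperimetric--Sobolev inequality applied to the zero extension of $u$, then bootstrap to $1<p<N$ by the standard Moser power trick. Part 2 is merely a re-parametrisation of part 1: setting $p_1=Nq/(N+q)\in[1,N)$ gives $p_1^{\ast}=q$ and the displayed relation $s=(N-1)p_1/(N-p_1)=(N-1)q/N$, so part 2 is exactly part 1 applied to $p_1$. The hypothesis $N\leq p<+\infty$ plays no role in the inequality itself; it only signals that the usual Sobolev exponent $p^{\ast}$ is unavailable in that regime, so one parametrises by the output exponent $q$.

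For the base case $p=1$, one may assume $u\geq 0$, since $|\nabla|u||=|\nabla u|$ a.e.\ and $(|u|)^+=|u|^+$. When $\nabla u\in L^1(D)$ and $u^+\in L^1(\partial D,\mathcal{H}^{N-1})$, the zero extension $\widetilde u$ of $u$ to $\R^N$ belongs to $BV(\R^N)$ with total variation
$$|D\widetilde u|(\R^N)=\int_D|\nabla u|\,dx+\int_{\partial D}u^+\,d\mathcal{H}^{N-1},$$
because the jump of $\widetilde u$ across $\partial D$ equals $u^+$ in the sense of approximate limits. Establishing this identity for a \emph{general} open set $D$ (with no Lipschitz assumption and $u^+$ defined via approximate upper limits) is the content of \cite{M0}, \cite[Corollary on page~319]{M}, and the sharpening in \cite{rondi:friedrichs}. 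The sharp $BV$--Sobolev inequality $\|v\|_{L^{N/(N-1)}(\R^N)}\leq C\,|Dv|(\R^N)$ applied to $\widetilde u$ then gives the case $p=1$, $s=1$ of part 1.

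For $1<p<N$, set $\gamma:=(N-1)p/(N-p)=s$, which satisfies $\gamma>1$ and the key algebraic identities $\gamma N/(N-1)=p^{\ast}$ and $(\gamma-1)p'=p^{\ast}$. By truncating $u$ at level $k$ and passing to the limit by monotone convergence, we may assume $u\geq 0$ is bounded. Applying the base case to $v=u^\gamma$, and using $|\nabla v|=\gamma u^{\gamma-1}|\nabla u|$ together with $v^+=(u^+)^\gamma$, yields
$$\|u\|_{L^{p^{\ast}}(D)}^{\gamma}\leq C\Big(\gamma\int_D u^{\gamma-1}|\nabla u|\,dx+\int_{\partial D}(u^+)^\gamma\,d\mathcal{H}^{N-1}\Big).$$
H\"older's inequality with exponents $p$ and $p'$ bounds the first integral by $\|\nabla u\|_{L^p(D)}\,\|u\|_{L^{p^{\ast}}(D)}^{\gamma-1}$, since $(\gamma-1)p'=p^{\ast}$. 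Writing $X:=\|u\|_{L^{p^{\ast}}(D)}$ and $B:=\|u^+\|_{L^s(\partial D)}$ this gives $X^\gamma\leq C\gamma\|\nabla u\|_{L^p(D)}X^{\gamma-1}+C B^\gamma$, and Young's inequality in the form $aX^{\gamma-1}\leq \tfrac12 X^\gamma+C_\gamma a^\gamma$ absorbs the cross term into the left-hand side. Taking $\gamma$-th roots yields the desired estimate.

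The only genuinely delicate step is the total-variation identity in the base case, together with the proper interpretation of $u^+$ on $\partial D$ in the absence of any regularity on $D$. Everything else is essentially algebraic: the exponents in the power trick are forced by the Sobolev scaling, Young's inequality does the absorption, and part 2 is a relabelling. Consequently, the bulk of the proof reduces to citing the general Friedrichs--Maz'ya inequality from \cite{M0,M,rondi:friedrichs}.
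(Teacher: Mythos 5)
Your proof is correct. The paper's own proof is simply a two-line citation: the case $1\leq p<N$ is taken verbatim from \cite[Corollary 2.4]{rondi:friedrichs}, and for $p\geq N$ one observes $q=p_1^{\ast}$, which is literally your part~2. What you do differently is unfold the cited corollary into a self-contained derivation: you take the $p=1$ statement as the genuine black box from \cite{M0,M,rondi:friedrichs}, and then bootstrap to $1<p<N$ by the Moser power trick on $v=|u|^\gamma$ with $\gamma=s=(N-1)p/(N-p)$. Your exponent bookkeeping checks out ($\gamma N/(N-1)=p^{\ast}$, $(\gamma-1)p'=p^{\ast}$, $v^+=(|u|^+)^\gamma$ since $t\mapsto t^\gamma$ is monotone), and the absorption via Young's inequality needs $\norm{u}_{L^{p^{\ast}}(D)}<+\infty$, which you secure by truncating and passing to the limit; the truncation also gives $(\min(|u|,k))^+=\min(|u|^+,k)\leq |u|^+$, so the boundary term behaves under the limit. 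This is in all likelihood the argument inside \cite[Corollary 2.4]{rondi:friedrichs}, so in substance you and the paper lean on the same input; what your version buys is a stand-alone proof (and an explicit view of how $C_1(p)$ degenerates as $p\to N^-$, since $\gamma\to\infty$), whereas the paper opts for brevity by delegating the whole range $1\leq p<N$ to the reference.
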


\begin{proof} The case $1\leq p<N$ is exactly 
	\cite[Corollary 2.4]{rondi:friedrichs}.
	When $p\geq N$, it is enough to note that $q=p_1^{\ast}$.
	\end{proof}

For $1\leq p<N$, this proposition shows that, roughly speaking, the validity of some kind of trace inequality with the optimal value of $s=\dfrac{(N-1)p}{N-p}$
implies the validity of the Sobolev inequality.

\begin{remark}\label{bridge} Let $ D$ be a bounded domain with Lipschitz boundary. Let $1\leq p<+\infty$ and $u\in W^{1,p}(\R^N\backslash \partial D)$.
Then we call $u_+\in L^p(\partial D)$ and $u_-\in L^p(\partial D)$ the traces of $u$ on $\partial  D$ from outside $ D$ and inside $ D$, respectively. Then, for $\mathcal{H}^{N-1}$-a.e. $x\in \partial D$,
we have $u^+=\max\{u_-,u_+\}$ and $u^-=\min\{u_-,u_+\}$, so
$|u|^+(x)=\max\{|u_-|,|u_+|\}\leq |u_-|+|u_+|$.

Moreover, let $u\in L^1_{loc}(\R^N)$, let $x\in \R^N$ and $s>0$ and let $\Phi:B_s(x)\to \R^N$ be a bi-Lipschitz map. If $v=u\circ\Phi^{-1}$, then $u^+(x)=v^+\left(\Phi(x)\right)$.

Let $D\subset B_R$ belong to $\mathcal{O}(r,L,M_0,\omega)$, with $\Omega=B_R$. Let $K=\partial D$. Let $1\leq p<+\infty$ and let $u\in W^{1,p}(D)$. We always extend $u$ to $0$ outside $D$, thus $u\in W^{1,p}(\R^N\setminus K)$ with compact support. If $x\in K\setminus \sing (K)$, through a bi-Lipschitz map $\Phi$, we can map an open neighbourhood $U$ of $x$ to $B_s$, for some $s>0$, such that $\Phi(U\cap K)=B_s\cap\pi$. Let us call $B_s^+=\{y\in B_s\mid y_N>0\}$ 
and $B_s^-=\{y\in B_s\mid y_N<0\}$. Then $v=u\circ\Phi^{-1}\in W^{1,p}(B_s\setminus \pi)$ and we can define its traces on $\pi\cap B_s$ from above and below as $v_+$ and $v_-$. Then, for $\mathcal{H}^{N-1}$-a.e. $y\in U\cap K$ we have $u^+(y)=\max\{v_-(\Phi(y)),v_+(\Phi(y)\}$ and 
$u^-(y)=\min\{v_-(\Phi(y)),v_+(\Phi(y)\}$. Thus, for $\mathcal{H}^{N-1}$-a.e. $x\in K$, $u^+(x)$ and $u^-(x)$ can be defined as the maximum and the minimum, respectively, of this kind of traces from the two sides of $K$.
\end{remark}

Let us introduce the following new classes. We fix a constant $R>0$ and, in the following definitions, we drop the dependence on it.

\begin{definition}
\label{classe:finale-restr}
	A set $K\subset \overline{B_R}$ belongs to $\widehat{\mathcal{FR}}(r,L,M_0,\omega)$ with parameters $r>0$, $L>0$, $M_0\in\N$ and $\omega$ modulus of continuity if 
	\begin{enumerate}[(1)]
	\item $K=\bigcup_{i=1}^M K^i$, where $K^i\in \mathcal{FR}(r,L,M_0)$, with $\Omega=B_R$, for every $i=1,\ldots,M$, for some $M\in\N$.
		\item For any $i=1,\ldots,M$ and $x\in K^i$, if $\dist(x,\bdry(K^i)) = \delta > 0$, then it follows that $\dist(x,\bigcup_{j\neq i} K^j) \ge \omega(\delta)$.
	\end{enumerate}

A set $K\subset \overline{B_R}$ belongs to $\widehat{\mathcal{FR}}_l(r,L,M_0,a,\delta_0)$ with parameters $r>0$, $L>0$, $M_0\in\N$, $a>0$ and $\delta_0>0$ if $K\in \widehat{\mathcal{FR}}(r,L,M_0,\omega)$ with $\omega$ modulus of continuity such that $\omega(\delta)\geq a \delta$ for any $0<\delta\leq\delta_0$.

We finally define
\[ \widehat{\mathcal{O}}_l(r,L,M_0,a,\delta_0) := \{ D \in \mathcal{O}(B_R) \mid  \partial D \in \widehat{\mathcal{FR}}_l(r,L,M_0,a,\delta_0) \}. \]
\end{definition}

We observe that the class $\widehat{\mathcal{FR}}(r,L,M_0,\omega)$ is contained in $\mathcal{FR}(r,L,M_0,\omega)$ and the only difference is that the intersection of the different $\mathcal{FR}(r,L,M_0)$ components can happen only along the boundaries and not along the singular sets. As we shall see, this gives us some more regularity, however the price we pay is that this class may be no longer closed in the Hausdorff distance.
In $\widehat{\mathcal{FR}}_l(r,L,M_0,a,\delta_0)$, we restrict the modulus of continuity $\omega$ to be essentially linear, which allows us to avoid difficult to handle cusps-like structures.  In order to understand such regularity better, we state the next lemma.

\begin{lemma}
\label{incoll:int}
Let us fix parameters $R>0$, $r>0$, $L>0$, $M_0\in\N$, $a>0$ and $\delta_0>0$.
Let $K^1\subset \overline{B_R}$ belong to $\mathcal{FR}(r,L,M_0)$, with $\Omega=B_R$. Let $K^1=\bigcup_{j=1}^J H^j$, where $H^j\in \mathcal{MR}(r,L)$ for every $j=1,\ldots,J$ and $1\leq J\le M_0$. Then there exists a positive constant $a_1$, depending on $r$, $L$ and $R$ only, 
such that for any $x\in H^1$ with $\dist(x, \bdry(H^1)) = \delta>0$ we have $\dist(x, K^1\setminus H^1) \ge a_1\delta$.

Consequently, let $K\subset \overline{B_R}$ belong to $\widehat{\mathcal{FR}}_l(r,L,M_0,a,\delta_0)$, with the representation
$K = \bigcup_{i=1}^M K^i$. Then there exists a positive constant $a_2$, depending on $r$, $L$, $R$ and $a$ only, 
such that for any $x\in H^1$ with $\dist(x, \bdry(H^1)) = \delta$, for some $0<\delta\leq\delta_0$, we have $\dist(x, K\setminus H^1) \ge a_2\delta$.
\end{lemma}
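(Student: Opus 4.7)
The plan for the first assertion is to work with two bi-Lipschitz charts at $x$: the chart $\Phi_x^1\colon B_r(x)\to\R^N$ from the $\mathcal{MR}(r,L)$ structure of $H^1$, which sends $H^1\cap B_r(x)$ into the hyperplane $\pi$; and the chart $\Phi_x\colon B_r(x)\to\R^N$ from the global Lipschitz-hypersurface structure of $K^1$, which sends the entire $K^1\cap B_r(x)$ into $\pi$. Both maps vanish at $x$ and are $L$-bi-Lipschitz. Setting $\delta':=\min(\delta,r)$, the key step will be to prove that $\Phi_x(H^1\cap B_r(x))$ contains a $\pi$-ball about the origin of radius $c\delta'$ for a constant $c=c(L)>0$. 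Given this, a point $y\in K^1\setminus H^1$ with $|y-x|<c\delta'/L$ would have $|\Phi_x(y)|\le L|y-x|<c\delta'$, so $\Phi_x(y)$ would lie in $\Phi_x(H^1\cap B_r(x))$, forcing $y\in H^1$ by injectivity of $\Phi_x$---a contradiction. Since $K^1\subset\overline{B_R}$ forces $\delta\le 2R$ and hence $\delta'\ge\min(1,r/(2R))\delta$, this yields $\dist(x,K^1\setminus H^1)\ge a_1\delta$ with $a_1=a_1(r,L,R)$.

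To establish the radius claim, I first control the $H^1$-chart. By Remark~\ref{bdrychar}, any relative boundary point of $\Phi_x^1(H^1\cap B_r(x))$ inside $B_{r/L}(0)\cap\pi$ is the $\Phi_x^1$-image of a point of $\bdry(H^1)$; since $\Phi_x^1$ is $L$-bi-Lipschitz and $\dist(x,\bdry(H^1))=\delta$, any such boundary point has norm at least $\delta/L\ge\delta'/L$. As $0$ lies in the $\pi$-interior of $\Phi_x^1(H^1\cap B_r(x))$ (because $x\notin\bdry(H^1)$), a connectedness argument gives $B_{\delta'/L}(0)\cap\pi\subset\Phi_x^1(H^1\cap B_r(x))$. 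Now set $\Psi:=\Phi_x\circ(\Phi_x^1)^{-1}$: this is $L^2$-bi-Lipschitz with $\Psi(0)=0$, and by construction it sends the portion of $\pi$ just identified into $\Phi_x(H^1\cap B_r(x))\subset\pi$. Restricting, $\Psi\colon B_{\delta'/L}(0)\cap\pi\to\pi\cong\R^{N-1}$ is a continuous injection, so its image is open in $\pi$ by Invariance of Domain. The $L^2$-bi-Lipschitz estimate forces $|\Psi(v)|\ge\delta'/L^3$ for $v$ on the boundary sphere, so a standard continuity/connectedness argument yields $B_{\delta'/L^3}(0)\cap\pi\subset\Psi(B_{\delta'/L}(0)\cap\pi)\subset\Phi_x(H^1\cap B_r(x))$, proving the claim with $c=1/L^3$.

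For the second assertion I combine part one with the $\widehat{\mathcal{FR}}_l$ condition. First, I observe that $\bdry(K^1)\cap H^1\subset\bdry(H^1)$: if $y\in H^1\setminus\bdry(H^1)$, part one applied at $y$ furnishes a neighbourhood on which $K^1$ coincides with $H^1$, and since $y$ is interior to $H^1$ as a Lipschitz hypersurface, the same $\mathcal{MR}$ chart witnesses that $y$ is interior to $K^1$. Applying part one at $x$ and (without loss of generality) assuming $a_1\le 1$, we obtain
\[
\dist(x,\bdry(K^1))\ge\min\bigl(\dist(x,\bdry(H^1)),\,\dist(x,K^1\setminus H^1)\bigr)\ge a_1\delta.
\]
Since $a_1\delta\le\delta\le\delta_0$, the linearity hypothesis $\omega(s)\ge as$ combined with condition~(2) of Definition~\ref{classe:finale-restr} yields $\dist(x,\bigcup_{i\ne 1}K^i)\ge a\,a_1\delta$. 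Combining this with $\dist(x,K^1\setminus H^1)\ge a_1\delta$ from part one and writing $K\setminus H^1\subset(K^1\setminus H^1)\cup\bigcup_{i\ne 1}K^i$, I conclude $\dist(x,K\setminus H^1)\ge a_2\delta$ with $a_2=a_1\min(1,a)$.

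The main obstacle is the quantitative open-mapping step in the second paragraph: although it is a routine consequence of Invariance of Domain together with the boundary-sphere bi-Lipschitz estimate, it must be carried out so as to produce an honest inclusion of $\pi$-balls $B_{\delta'/L^3}(0)\cap\pi\subset\Psi(B_{\delta'/L}(0)\cap\pi)$ rather than only openness of the image.
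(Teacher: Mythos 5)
Your proof is correct and follows essentially the same route as the paper's: reduce to showing that a suitable chart maps a $\pi$-ball of radius comparable to $\delta$ into the image of $H^1\cap B_r(x)$, then pull back by the $L$-bi-Lipschitz bound. The only differences are cosmetic: you pass through the $\mathcal{MR}$ chart of $H^1$ and then transfer to the $K^1$ chart via the transition map $\Psi$, whereas the paper works directly with the $K^1$ chart $\Phi_x^1$ (noting it is also a valid chart for $H^1$ and invoking Remark~\ref{bdrychar}); this costs you extra powers of $L$ in $a_1$, which is immaterial. Your explicit Invariance-of-Domain / degree step is precisely the topology the paper is leaning on implicitly in its one-line assertion that $B_{\delta/L}\cap\pi\subset\Phi_x^1(H^1\cap B_r(x))$, and your more cautious bound $\dist(x,\bdry(K^1))\ge a_1\delta$ (rather than the paper's $\ge\delta$, which as stated quietly presupposes part one) is the right way to read that step; the resulting $a_2$ differs only by constants.
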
 

\begin{proof} Let $x\in H^1$ be such that $\dist(x, \bdry(H^1)) = \delta>0$. We note that $\delta\leq 2R$ and $\dist(x, \bdry(H^1))\leq \dist(x, \bdry(K^1))$.

For the first part, we assume, without loss of generality, that $0<\delta\leq r/4$.
 If $B_r(x) \cap (K^1\setminus H^1) = \emptyset$,
then $\dist(x, K^1\setminus H^1) \ge r\geq \delta$. Otherwise, let $\Phi^1_x$ be the map relative to $K^1$ as a Lipschitz hypersurface.
Then, $\Phi^1_x(x) = 0$ and $B_{\delta/L}\cap \pi$ is contained in the image $\Phi^1_x(H^1 \cap B_r(x))$. Hence,
$K^1\cap B_{\delta/L^2}(x)\subset (\Phi^1_x)^{-1}(B_{\delta/L}\cap \pi)\subset H^1$, thus $\dist(x, K^1\setminus H^1) \ge \delta/L^2$ and the first part is proved.

The second part follows immediately by taking $a_2=\min\{a,a_1\}$.
\end{proof}

We are ready to state the following theorem, the main of this section.

\begin{theorem}
\label{sobolevemb:linear}
	Let D belong to $\widehat{\mathcal{O}}_l(r,L,M_0,a,\delta_0)$
	with parameters $r>0$, $L>0$, $M_0\in\N$, $a>0$ and $\delta_0>0$.
		Let $1\leq p<+\infty$.
		
		If $1\leq p<N$,
	then there exists $C_1>0$, $C_1$ depending only on $p$, $r$, $L$, $M_0$, $a$, $\delta_0$ and $R$, such that
	\[ \norm{u}_{L^{p^{\ast}}(D)} \le C_1 \norm{u}_{W^{1,p}(D)}  . \]
	
	If $N\leq p<+\infty$, then for any $1\leq q<+\infty$ 
	there exists $C_2>0$, $C_2$ depending only on $p$, $q$, $r$, $L$, $M_0$, $a$, $\delta_0$ and $R$, such that
	\[ \norm{u}_{L^q(D)} \le C_1 \norm{u}_{W^{1,p}(D)}  . \]
	\end{theorem}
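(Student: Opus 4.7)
The natural strategy, suggested by the introduction, is to reduce the Sobolev embedding to a trace inequality on $\partial D$ via the Friedrichs-type inequality of Proposition~\ref{friedrichs}. Concretely, for $1\leq p<N$ with $s=(N-1)p/(N-p)$, the plan is to prove
\[
\||u|^+\|_{L^s(\partial D)} \leq C \|u\|_{W^{1,p}(D)},
\]
with $C$ depending only on the class parameters; the Sobolev inequality then follows directly from Proposition~\ref{friedrichs}. The case $N\leq p<+\infty$ is handled by reduction to the subcritical range via $p_1=Nq/(N+q)<N$, using that $C_2(q)=C_1(p_1)$. By Remark~\ref{bridge}, when $u\in W^{1,p}(D)$ is extended by $0$ outside $D$, the function $|u|^+$ on $\partial D$ agrees $\mathcal{H}^{N-1}$-a.e.\ with the interior trace $|u_+|$, so what must be estimated is this interior trace in $L^s$.

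Next, since $K=\partial D$ belongs to $\widehat{\mathcal{FR}}_l(r,L,M_0,a,\delta_0)$, I would decompose $K=\bigcup_{i,j} H^{i,j}$ into $\mathcal{MR}(r,L)$ pieces; the total number of pieces is uniformly bounded by $\overline{M}M_0$ via Remark~\ref{upperbound:finale}. Fix a single piece $H=H^{i,j}$. The key geometric input is Lemma~\ref{incoll:int}, which, combined with the linearity $\omega(\delta)\geq a\delta$ of $\widehat{\mathcal{FR}}_l$, yields a uniform constant $c_0>0$ with $\dist(x, K\setminus H)\geq c_0\min(\dist(x,\bdry(H)),\delta_0)$ for every $x\in H$. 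Setting $\rho(x)=\tfrac{c_0}{2}\min(\dist(x,\bdry(H)),\delta_0)$, the ball $B_{\rho(x)}(x)$ intersects $K$ only inside $H$, and the $\mathcal{MR}$-chart of $H$ at $x$ straightens $H\cap B_{\rho(x)}(x)$ onto a flat hyperplane; consequently $D\cap B_{\rho(x)}(x)$ splits into at most two Lipschitz half-ball subdomains with constants depending only on $L$. Scaling the classical Lipschitz trace inequality \eqref{Besovimmersion} to each half-ball produces a local estimate of the form
\[
\|u_+\|_{L^s(H\cap B_{\rho(x)}(x))}^s \leq C\bigl[\|\nabla u\|_{L^p(B_{\rho(x)}(x)\cap D)}^s + \rho(x)^{-s}\|u\|_{L^p(B_{\rho(x)}(x)\cap D)}^s\bigr].
\]

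To globalize, I would cover $H$ by a Besicovitch family of these balls with bounded multiplicity, organized along dyadic shells $T_m=\{x \in H : 2^{-m-1}\delta_0<\dist(x,\bdry(H))\leq 2^{-m}\delta_0\}$; the uniform bound on $\mathcal{H}^{N-2}(\bdry(H))$ controls the number of balls needed on each shell. Since $s\geq p$, the gradient contributions sum cleanly through $\sum a_k^{s/p}\leq(\sum a_k)^{s/p}$ applied to $a_k=\|\nabla u\|_{L^p(B_k)}^p$, yielding the term $C\|\nabla u\|_{L^p(D)}^s$. The main obstacle is the lower-order $L^p$ contribution, which carries the weight $\rho(x)^{-s}$ that blows up as $\dist(x,\bdry(H))\to 0$. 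My plan is to rewrite $\rho^{-1}\|u\|_{L^p(B_\rho)}\leq C\|u\|_{L^{p^*}(B_\rho)}$ via H\"older and the exact matching $N/p - N/p^* = 1$, and to absorb the resulting $\|u\|_{L^{p^*}(D)}$-type contribution back into the estimate through Proposition~\ref{friedrichs} in a bootstrap. The linearity of $\omega$ is essential to keep the constants on the dyadic summation uniformly bounded --- cusp-like geometries would destroy this control. This final absorption, which requires showing that the constant multiplying $\|u\|_{L^{p^*}(D)}$ can be made strictly smaller than $1/C_1$ (with $C_1$ the Friedrichs constant from Proposition~\ref{friedrichs}), is where I expect the proof to really stand or fall.
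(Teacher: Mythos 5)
Your overall strategy — reduce the Sobolev inequality to a trace inequality via Proposition~\ref{friedrichs}, then prove the trace inequality on each $\mathcal{MR}(r,L)$ piece $H$ using the quantitative separation from Lemma~\ref{incoll:int} and the linearity of $\omega$ — matches the paper's structure (the paper factors the trace estimate out as Proposition~\ref{bdryineqprop}). But your local construction near $\bdry(H)$ is genuinely different from the paper's, and it is precisely at the step you flag that the argument breaks.

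You cover $H$ by balls $B_{\rho(x)}(x)$ of shrinking radius $\rho(x)\sim\dist(x,\bdry(H))$ and sum along dyadic shells, which produces, after the scaling $\rho^{-1}\|u\|_{L^p(B_\rho)}\leq C\|u\|_{L^{p^*}(B_\rho)}$, a term of the form $\sum_k\|u\|_{L^{p^*}(B_k)}^s$ that you hope to absorb. There are two concrete obstructions. First, $s=(N-1)p/(N-p)<p^*=Np/(N-p)$, so $\sum_k a_k^{s/p^*}\leq(\sum_k a_k)^{s/p^*}$ fails (the exponent is less than $1$); bounded overlap controls $\sum_k\|u\|_{L^{p^*}(B_k)}^{p^*}$, not $\sum_k\|u\|_{L^{p^*}(B_k)}^s$. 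Second, even where the sum does converge, the constant it produces is of order one: taking $u\equiv 1$ near $\bdry(H)$, the shell at scale $2^{-m}$ contributes $\sim 2^{m(N-2)}\cdot(2^{-m})^{sN/p^*}=2^{-m}$, summing to a fixed constant, with no mechanism in sight to drive it below $1/C_1$. An absorption argument needs smallness that your covering does not provide; the dependence on the specific geometry (e.g.\ $\mathcal{H}^{N-2}(\bdry(H))$, the cone angle) is not tunable.

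The paper avoids all of this by never shrinking the scale near $\bdry(H^1)$. From a boundary point $x\in\bdry(H^1)$, using the $\mathcal{MR}$ chart $\Phi_x$, it observes (via Lemma~\ref{incoll:int} and the linearity $\omega(\delta)\geq a\delta$) that the image of $K\setminus H^1$ stays at distance $\geq a_3 y_{N-1}$ from the flattened hyperplane $\pi$, with $a_3=a_2/L^2$ bounded away from $0$, uniformly in the class. It then takes the two \emph{wedges} $W^\pm=\{y\in\delta_1 Q^\pm\mid |y_N|<a_3 y_{N-1}\}$, which are fixed Lipschitz domains of fixed size $\delta_1$ sitting in $\Phi_x((\R^N\setminus K)\cap B_r(x))$ and containing the full face $\delta_1\Gamma$ up to and including the image of $\bdry(H^1)$. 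The classical trace inequality \eqref{Besovimmersion} applied to $v^\pm=u\circ\Phi_x^{-1}$ on $W^\pm$ then gives the desired estimate on $H^1\cap B_{\delta_1/L}(x)$ with a uniform constant and no bootstrap; a finite covering by balls of a fixed radius $r_1$ completes the proof. This is the crucial idea your proposal misses: the observation $\dist(y,K\setminus H^1)\gtrsim\dist(y,\bdry(H^1))$ should be used to construct a \emph{fixed-scale wedge of bounded angle}, not a family of shrinking balls.
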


\begin{proof} It is an immediate consequence of Proposition~\ref{friedrichs} and of the next proposition.\end{proof}

\begin{proposition}
\label{bdryineqprop}
	Let $D$ belong to $\widehat{\mathcal{O}}_l(r,L,M_0,a,\delta_0)$
	with parameters $r>0$, $L>0$, $M_0\in\N$, $a>0$ and $\delta_0>0$.
		Let $1\leq p<+\infty$ and let $u\in W^{1,p}(D)$. 
		Consider $u$ extended to $0$ outside $ D$. 
		
	If
	\[ 1\leq p<N, \quad s = \frac{(N-1)p}{N-p}, \]
	then there exists $\tilde{C}_1>0$, $\tilde{C}_1$ depending only on $p$, $r$, $L$, $M_0$, $a$, $\delta_0$ and $R$, such that
		\[ \norm{ \abs{u}^+ }_{L^s(\partial  D)} \le \tilde{C}_1 \norm{ u}_{W^{1,p}( D)} . \]
	
	If $N\leq p<+\infty$, for any $1\leq s <+\infty$, then
	there exists $\tilde{C}_2>0$, $\tilde{C}_2$ depending 
	only on $p$, $s$, $r$, $L$, $M_0$, $a$, $\delta_0$ and $R$, such that
		\[ \norm{ \abs{u}^+ }_{L^s(\partial  D)} \le \tilde{C}_2 \norm{ u}_{W^{1,p}( D)} . \]
\end{proposition}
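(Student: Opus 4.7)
The plan is to reduce the desired inequality to the classical trace inequality \eqref{Besovimmersion} applied to a Lipschitz subdomain of $D$ constructed ``on one side'' of each piece of $\partial D$.

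First, by Remark~\ref{bridge}, for $\mathcal{H}^{N-1}$-a.e.\ $x \in \partial D \setminus \sing(\partial D)$ we have $|u|^+(x) \leq |u_+|(x) + |u_-|(x)$, where $u_{\pm}$ denote the traces on $\partial D$ from the two sides (one of which may vanish if $D$ lies on only one side). Since $\mathcal{H}^{N-1}(\sing(\partial D)) = 0$ by Remark~\ref{upperbound:finale}, and the decomposition $\partial D = \bigcup_{i,j} H^{i,j}$ into $\mathcal{MR}(r,L)$ pieces has at most $\overline{M} M_0$ terms with $\overline{M}$ uniform, it suffices to prove $\|u_{\pm}\|_{L^s(H)} \leq C\|u\|_{W^{1,p}(D)}$ for each piece $H \in \mathcal{MR}(r,L)$ appearing in the decomposition of $\partial D$, and for each side of $H$ that meets $D$.

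Second, I construct for each such side a Lipschitz subdomain $\Omega \subset D$ whose boundary contains $H$ up to an $\mathcal{H}^{N-1}$-null set. In the bi-Lipschitz local coordinates provided by the $\mathcal{MR}(r,L)$ structure, $H$ looks locally like the half-plane $\pi^+$ and $\bdry(H)$ like the set $\{y_{N-1}=0,\,y_N=0\}$; I set
\[ \Omega := \{(y',y_N)\,:\, y_{N-1}\geq 0,\ 0 < y_N < h(y')\},\qquad h(y') := \min\{c\,\dist(y',\bdry(H)),\ \rho_0\}, \]
for suitable small constants $c, \rho_0 > 0$ depending only on $r,L,a,\delta_0,R$. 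Lemma~\ref{incoll:int} yields $\dist(x,\partial D \setminus H) \geq a_2 \dist(x,\bdry(H))$ for $x\in H$ with $\dist(x,\bdry(H))\leq \delta_0$; choosing $c < a_2$ (and $\rho_0$ small enough) guarantees that $\Omega$ is disjoint from $\partial D \setminus H$ and hence, being connected, lies entirely on the chosen side of $H$ inside $D$. Since $h$ is Lipschitz with constant $c$, the subgraph $\Omega$ has Lipschitz boundary with constants $(\tilde r,\tilde L)$ depending only on $r,L,a,\delta_0,R$: in particular, along $\bdry(H)$ the boundary forms a genuine Lipschitz wedge, not a cusp, precisely because $h$ vanishes linearly.

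Third, applying \eqref{Besovimmersion} to $\Omega \in \mathcal{A}(\tilde r, \tilde L, R)$ gives
\[ \|u_+\|_{L^s(H)} \leq \|u\|_{L^s(\partial \Omega)} \leq c_{p,s}\|u\|_{W^{1,p}(\Omega)} \leq c_{p,s}\|u\|_{W^{1,p}(D)}. \]
Summing over the (uniformly bounded number of) pieces and sides yields the result for $1\leq p < N$. The case $p\geq N$ reduces to the previous one by Hölder's inequality: given $s<+\infty$, set $p_1 := sN/(s+N-1) < N$, so $s = (N-1)p_1/(N-p_1)$, and note $\|u\|_{W^{1,p_1}(D)} \leq C \|u\|_{W^{1,p}(D)}$ since $D \subset B_R$.

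The main obstacle is verifying that $\partial \Omega$ is Lipschitz with constants depending only on the class parameters, especially near $\bdry(H)$ where the tapering occurs; this is exactly where the linear modulus $\omega(\delta)\geq a\delta$ and Lemma~\ref{incoll:int} are indispensable, since a sub-linear $\omega$ would force $h$ to taper sub-linearly and produce cusps, invalidating \eqref{Besovimmersion} with uniform constants. Some technical bookkeeping is required to pass between the bi-Lipschitz chart of $H$ and the ambient Euclidean picture while preserving Lipschitz constants, but this is routine once the local model is in place.
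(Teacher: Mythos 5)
Your high-level plan is the right one and essentially coincides with the paper's: reduce to the $\mathcal{MR}(r,L)$ pieces $H$ (of which there are at most $\overline{M}M_0$, uniformly), use Lemma~\ref{incoll:int} together with the linear modulus to produce a Lipschitz region on each side of $H$ inside $D$ that tapers linearly toward $\bdry(H)$, apply the trace inequality \eqref{Besovimmersion} there, and reduce $p\geq N$ to $p_1<N$ by H\"older. The picture you draw (wedges, no cusps, uniform constants) is exactly the one the paper exploits, and the $|u|^+\le |u_+|+|u_-|$ reduction via Remark~\ref{bridge} is also what the paper uses.

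The gap is in Step 2: you announce a single global Lipschitz subdomain $\Omega\subset D$ whose boundary contains (essentially all of) $H$, but then define it by a formula
$\Omega=\{(y',y_N): y_{N-1}\ge 0,\ 0<y_N<h(y')\}$
that only makes sense in one bi-Lipschitz chart $\Phi_x$. The class $\mathcal{MR}(r,L)$ gives charts on balls of radius $r$; a compact hypersurface-with-boundary $H$ is in general not covered by one chart, can have several boundary components, can be nonorientable, etc., so there is no single coordinate system in which your subgraph formula defines a subset of $D$. And if one instead interprets $\Omega$ chart-free as a one-sided tube of variable width over $H$ (using nearest-point projection and signed distance), then for a merely Lipschitz $H$ the projection is not single-valued near $H$ and the resulting region need not be a Lipschitz domain with controlled constants, so the application of \eqref{Besovimmersion} to $\Omega$ with a uniform $c_{p,s}$ is not justified. "Pass between the chart and the ambient picture" is not routine here --- this is precisely the step that is missing.

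The repair is to localize rather than globalize, which is what the paper does: for each $x\in H$ take the local image $\Phi_x(B_r(x))$, build the two wedges $W^\pm=\{y\in\delta_1 Q^\pm : |y_N|<a_3\,y_{N-1}\}$ (your tapering subgraph, but confined to a fixed small cube), check using Lemma~\ref{incoll:int} that $W^\pm\subset\Phi_x\bigl((\R^N\setminus\partial D)\cap B_r(x)\bigr)$, apply \eqref{Besovimmersion} on each wedge (these are Lipschitz domains of a fixed shape), pull back to get $\|\,|u|^+\|_{L^s(H\cap B_{r_1}(x))}\le C\|u\|_{W^{1,p}(D)}$ for a uniform $r_1$, and finally cover $H$ by finitely many (a uniformly bounded number of) balls of radius $r_1$ and sum. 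This avoids any claim about a single global collar and only ever uses the trace inequality on a bounded collection of uniformly Lipschitz domains. With that substitution your argument matches the paper's proof.
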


\begin{proof} We call $K=\partial D$. Without loss of generality, we assume for simplicity that $u\geq 0$. It is enough to prove the case $1\leq p<N$. In fact, for the other case one can just pick $1\leq p_1=\dfrac{Ns}{N-1+s}<N$ and apply the first case to $p_1$. We just recall that $|D|\leq |B_R|$ and $\mathcal{H}^{N-1}(K)$ is bounded by a constant depending only on $r$, $L$, $M_0$, $a$, $\delta_0$ and $R$, see Remark~\ref{upperbound:finale}.

We take a representation of $K=\bigcup_{i=1}^M K^i$. Since $M\leq \overline{M}$, again by Remark~\ref{upperbound:finale}, it is enough to estimate the norm of $|u|^+$ on $K^i$, for any $i=1,\ldots,M$. Up to reordering, let us just consider $K^1$ and take its representation $K^1=\bigcup_{j=1}^J H^j$. Since $J\leq M_0$, it is enough to estimate the norm of $|u|^+$ on $H^j$, for any $j=1,\ldots,J$. Again up to reordering, we can just focus on proving the estimate on $H^1$, which we recall is belonging to $\mathcal{MR}(r,L)$ with respect to $\Omega=B_R$.

Let $Q^+=[-1,1]^{N-2}\times [0,1]\times [0,1]$,  $Q^-=[-1,1]^{N-2}\times [0,1]\times [-1,0]$  and $\Gamma=[-1,1]^{N-2}\times [0,1]\times  \{0\}\subset \partial Q^{\pm}$. 
Let us fix $x\in H^1$. We begin with the case when $x\in \bdry(H^1)$.
Let $\Phi_x:B_r(x)\to\R^N$ be the bi-Lipschitz map related to $H^1$ as an $\mathcal{MR}(r,L)$ hypersurface and $x$. We have that
$B_{r/L}\cap \pi^+\subset \Phi_x(K\cap B_r(x))$. We can find $\delta_1$, with $0<\delta_1\leq \delta_0$ depending only on $\delta_0$, $r$ and $L$, such that
the diameter of $\delta_1 Q^{\pm}$ is less than or equal to $r/(4L)$. Then, for any $y=(y_1,\ldots,y_{N-1},0)\in \delta_1\Gamma\subset \Phi_x(K\cap B_r(x))$, we have that
$\dist(\Phi_x^{-1}(y),\bdry(H^1))\ge y_{N-1}/L$. Thus, from Lemma~\ref{incoll:int}, $\dist(\Phi_x^{-1}(y),K\setminus H^1)\ge a_2y_{N-1}/L$, therefore
$$\dist(y,\Phi_x((K\setminus H^1)\cap B_r(x)))\geq a_2y_{N-1}/L^2=a_3 y_{N-1}$$
with $a_3=a_2/L^2\leq 1$.

We consider the two wedges $W^{\pm}=\{y\in \delta_1Q^{\pm}\mid  |y_N|<a_3 y_{N-1}\}$ and we conclude that $W^{\pm}\subset
\Phi_x((\R^N\setminus K)\cap B_r(x)))$. Let $v^{\pm}=u\circ \Phi_x^{-1}$ on $W^{\pm}$. Note that $\delta_1\Gamma\subset W^{\pm}$. 
Then, by the trace inequality \eqref{Besovimmersion}, for a constant $c_p$ depending only on $p$, $\delta_1$ and $a_3$, we have
$$\|v_{\pm}\|_{L^s(\delta_1\Gamma)}\leq c_p\|v \|_{W^{1,p}( W^{\pm})}$$
where $v_{\pm}$ denotes as before traces from above or from below respectively.
Hence, since integral and Sobolev norms are preserved under bi-Lipschitz transformations, up to constants depending only on $L$ and, for Sobolev spaces, on $p$, we conclude that
$$\||u|^+\|_{L^s(H^1\cap B_{\delta_1/L}(x))}\leq C\|u \|_{W^{1,p}(\R^N\setminus K)}=C\|u \|_{W^{1,p}(D)}.$$
Here $C$ clearly depends only on $p$, $r$, $L$, $a$, $\delta_0$ and $R$.

Now, if a point $x\in H^1$ has a distance less than or equal to $\delta_1/(2L)$ from $\bdry(H^1)$, we conclude that, for the same constant $C$,
$$\||u|^+\|_{L^s(H^1\cap B_{\delta_1/(2L)}(x))}\leq C\|u \|_{W^{1,p}(D)}.$$

Applying a completely analogous argument, actually easier with respect to the one for boundary points, to the points $x\in H^1$ whose distance 
from $\bdry(H^1)$ is 
greater that $\delta_1/(2L)$, we can find $r_1>0$ and $C_1$, depending only on $p$, $r$, $L$, $a$, $\delta_0$ and $R$, such that for any $x\in H^1$ we have
$$\||u|^+\|_{L^s(H^1\cap B_{r_1}(x))}\leq C_1\|u \|_{W^{1,p}(D)}.$$
By covering $H^1$ with a finite number $m$ of balls of radius $r_1$ centred at points of $H^1$, we conclude that
$$\||u|^+\|_{L^s(H^1)}\leq C_1m^{1/s}\|u \|_{W^{1,p}(D)}.$$
Since $m$ can be bounded by a constant depending on $r_1$ and $R$ only, the proof is concluded.
\end{proof}
 
\section{Stability for the acoustic scattering problem}
\label{section:classscat}

We say that a set $K\subset \R^N$ is a \emph{scatterer} if $K$ is compact and $G:=\R^N\setminus K$ is connected. For any $s>0$, we call $G_s=G\cap B_s=B_s\setminus K$.

Given an open set $D\subset \R^N$, we say that $\sigma$ is a \emph{uniformly elliptic tensor} in $D$ if
$\sigma \in L^{\infty}(D; M_{sym}^{N\times N}(\R))$ and it satisfies the following ellipticity condition for some constants $0<\lambda_0<\lambda_1$
	\begin{equation} \label{ellcond}
	\lambda_0 \abs{\xi}^2 \le \sigma(x) \xi\cdot \xi\le \lambda_1 \abs{\xi}^2 \quad \text{for a.e. } x\in D\text{ and for any }\xi \in \R^N.
	\end{equation}

Given an open set $D$, a uniformly elliptic tensor $\sigma$ in $D$, $\tilde{q}\in L^{\infty}(D)$, $f\in L^2(D)$ and $F\in L^2(D;\R^N)$,
we consider the following equation with variable coefficients
\begin{equation}
\label{helmholtz:var}
	\divergence (\sigma \nabla u) + \tilde{q} u = \divergence(F)+f \qquad \text{in }D
	\end{equation}
whose weak formulation is that $u\in W^{1,2}_{loc}(D)$ and
\[ \int_{D}\sigma\nabla u\cdot \nabla v-\int_{D}\tilde{q}uv=\int_{D}F\cdot\nabla v-\int_{D}fv \] 
for any $v\in W^{1,2}(D)$ with compact support in $D$. If we consider a homogeneous Neumann condition, namely,
\begin{equation}
\label{helmholtz:varBC}
	\sigma \nabla u\cdot \nu=0  \qquad \text{on }\partial D,
	\end{equation}
the weak formulation of \eqref{helmholtz:var}-\eqref{helmholtz:varBC} is
	that $u\in W^{1,2}(D)$ and
\begin{equation}\label{weakform}
\int_{D}\sigma\nabla u\cdot \nabla v-\int_{D}\tilde{q}uv=\int_{D}F\cdot\nabla v-\int_{D}fv\quad\text{for any }v\in W^{1,2}(D).
\end{equation}

We have the following stability result for boundary value problems of this kind under variations of $D$, the coefficients $\sigma$ and $q$, and the 
source terms $f$ and $F$.

\begin{proposition}\label{stabatfinite}
Let $\Omega$ be an arbitrary bounded domain in $\R^N$ and let us fix $0<\lambda_0<\lambda_1$ and $c_1>0$. Suppose that, for any $n\in\N$, $D_n\in \mathcal{O}(\Omega)$, $\sigma_n$ is a uniformly elliptic tensor in $\Omega$ satisfying \eqref{ellcond} in $\Omega$, $\tilde{q}_n\in L^{\infty}(\Omega)$ with
$\|\tilde{q}_n\|_{L^{\infty}(\Omega)}\leq c_1$, $f_n\in L^2(\Omega)$ and $F_n\in L^2(\Omega;\R^N)$. Let $u_n\in W^{1,2}(D_n)$ solve the corresponding problem \eqref{helmholtz:var}-\eqref{helmholtz:varBC}, that is, for any $v\in W^{1,2}(D)$,
\begin{equation}\label{weakformbis}
\int_{D_n}\sigma_n\nabla u_n\cdot \nabla v-\int_{D_n}\tilde{q}_nu_nv=\int_{D_n}F_n\cdot\nabla v-\int_{D_n}f_n v.
\end{equation}

We assume that $D_n\to D$ in the Hausdorff complementary topology, $W^{1,2}(D_n)$ converges to $W^{1,2}(D)$ in the sense of Mosco, $\sigma_n\to \sigma$ and $\tilde{q}_n\to \tilde q$ almost everywhere in $\Omega$,
$f_n$ converges to $f$ weakly in $L^2(\Omega)$ and $F_n$ converges to $F$ weakly in $L^2(\Omega;\R^N)$.

Moreover, we assume that, for some constant $C>0$, $\|u_n\|_{L^2(D_n)}\leq C$ for any $n\in\N$. 

Then, up to a subsequence, $(u_n,\nabla u_n)$ converges to $(u,\nabla u)$, with the usual extensions to $0$, weakly in $L^2(\Omega;\R^{N+1})$, where $u\in W^{1,2}(D)$ solves \eqref{helmholtz:var}-\eqref{helmholtz:varBC}. Furthermore,
we also have that $\sigma_n\nabla u_n$ converges to $\sigma\nabla u$  weakly in $L^2(\Omega;\R^{N})$ and $\sqrt{\sigma_n}\nabla u_n$ converges to $\sqrt{\sigma}\nabla u$
weakly in $L^2(\Omega;\R^{N})$. 
\end{proposition}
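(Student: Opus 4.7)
The plan is to extract a weakly convergent subsequence via an energy estimate, identify the limit using Mosco condition (M1), and then pass to the limit in the weak formulation with the help of Mosco condition (M2). First, testing \eqref{weakformbis} with $v=u_n$ and using the ellipticity \eqref{ellcond}, the uniform bound $\|\tilde q_n\|_{L^\infty(\Omega)}\le c_1$, the assumption $\|u_n\|_{L^2(D_n)}\le C$, and the fact that weak convergence in $L^2$ of $f_n$ and $F_n$ entails uniform $L^2$-boundedness of both sequences, an application of Cauchy--Schwarz together with Young's inequality (with weight $\lambda_0/2$, used to absorb the gradient term on the right-hand side) yields a uniform bound on $\|\nabla u_n\|_{L^2(D_n)}$. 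After extending by $0$ outside $D_n$, the pair $(u_n,\nabla u_n)$ is therefore bounded in $L^2(\Omega;\R^{N+1})$, so along a subsequence it converges weakly to some $(u^\ast,V)$; condition (M1) of Mosco convergence then forces $u:=u^\ast\in W^{1,2}(D)$ with $V=\nabla u$ on $D$, both extended by zero outside.

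To check that $u$ solves \eqref{weakform}, fix any $v\in W^{1,2}(D)$ and invoke condition (M2) to produce a recovery sequence $v_n\in W^{1,2}(D_n)$ with $(v_n,\nabla v_n)\to(v,\nabla v)$ strongly in $L^2(\Omega;\R^{N+1})$. The crucial point is that, since $\sigma_n$ and $\tilde q_n$ are uniformly bounded and converge a.e.\ in $\Omega$, dominated convergence yields $\sigma_n\nabla v_n\to\sigma\nabla v$ and $\tilde q_n v_n\to\tilde q v$ strongly in $L^2(\Omega)$. Each term in \eqref{weakformbis} (which equals its counterpart integrated over $\Omega$ thanks to the zero extensions) can then be written as a pairing of a weakly convergent sequence against a strongly convergent one---using symmetry of $\sigma_n$ to rewrite $\int\sigma_n\nabla u_n\cdot\nabla v_n=\int\nabla u_n\cdot(\sigma_n\nabla v_n)$---and the passage to the limit is immediate, delivering \eqref{weakform}.

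For the final claims, both $\sigma_n\nabla u_n$ and $\sqrt{\sigma_n}\,\nabla u_n$ are uniformly bounded in $L^2(\Omega;\R^N)$ by ellipticity, hence admit weak limits along a further subsequence. For an arbitrary $\phi\in L^2(\Omega;\R^N)$ the fields $\sigma_n\phi$ and $\sqrt{\sigma_n}\,\phi$ converge strongly in $L^2$ to $\sigma\phi$ and $\sqrt{\sigma}\,\phi$ respectively, the latter because the matrix square root is continuous on the set of symmetric matrices with spectrum contained in $[\lambda_0,\lambda_1]$; pairing with the weakly convergent $\nabla u_n$ identifies these weak limits as $\sigma\nabla u$ and $\sqrt{\sigma}\,\nabla u$.

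I do not anticipate a serious obstacle: the argument is a standard combination of the Mosco-convergence machinery with the strong--weak passage to the limit for variable coefficients, the only care needed being to track how the a.e.\ convergence of the coefficients interacts with the strong $L^2$-convergence of the Mosco recovery sequences and with the weak convergence of the gradients.
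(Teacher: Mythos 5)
Your proof is correct and follows the same strategy as the paper's: derive a uniform energy bound by testing with $v=u_n$, extract a weakly convergent subsequence, identify its limit via Mosco condition (M1), pass to the limit in the weak formulation using (M2) recovery sequences together with strong-weak $L^2$ pairings (made possible by the dominated-convergence argument for $\sigma_n\nabla v_n$ and $\tilde q_n v_n$), and obtain the weak convergences of $\sigma_n\nabla u_n$ and $\sqrt{\sigma_n}\,\nabla u_n$ by duality. The paper merely states these steps as "easy to infer" and "not difficult to check"; you have supplied the details, and they are the intended ones.
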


\begin{proof} It is easy to infer that we have a uniform bound on $\|u_n\|_{W^{1,2}(\Omega)}$, therefore, up to a subsequence, $(u_n,\nabla u_n)$ weakly converges, in $L^2(\Omega;\R^{N+1})$, to $(u,\nabla u)$, with $u\in W^{1,2}(D)$. Here we used condition (M1) of Mosco convergence. It is not difficult to show that also $\sigma_n\nabla u_n$ converges to $\sigma\nabla u$  weakly in $L^2(\Omega;\R^{N})$ and $\sqrt{\sigma_n}\nabla u_n$ converges to $\sqrt{\sigma}\nabla u$
weakly in $L^2(\Omega;\R^{N})$. 

We need to prove that $u$ solves \eqref{helmholtz:var}-\eqref{helmholtz:varBC}. Take $v\in W^{1,2}(D)$ and, by condition (M2) of Mosco convergence, let
$v_n\in W^{1,2}(D_n)$ be such that $(v_n,\nabla v_n)$ converges to $(v,\nabla v)$ strongly in $L^2(\Omega;\R^{N+1})$. Then
\[ \int_{D_n}\sigma_n\nabla u_n\cdot \nabla v_n-\int_{D_n}\tilde{q}_nu_nv_n=\int_{D_n}F_n\cdot\nabla v_n-\int_{D_n}f_n v_n\]
and it is not difficult to check that we can pass to the limit and obtain \eqref{weakform}.
\end{proof}

\begin{corollary}\label{stabatfinitecor}
Under the assumptions of Proposition~\ref{stabatfinite}, further assume that there exists $p>2$ and $C_1>0$ such that for any $n\in \N$
\begin{equation}\label{uniformhigher}
\|v\|_{L^p(D_n)}\leq C_1\|v\|_{W^{1,2}(D_n)}\quad\text{for any }v\in W^{1,2}(D_n).
\end{equation}
Then we have that, up to a subsequence, $u_n\to u$ strongly in $L^2(\Omega)$.

Furthermore, if $F_n\to F$ strongly in $L^2(\Omega;\R^N)$, we also have that, for the same subsequence,
 $\sqrt{\sigma_n}\nabla u_n\to \sqrt{\sigma}\nabla u$ strongly in $L^2(\Omega;\R^{N})$.
In this case, we can also replace the assumption that $\tilde{q}_n\to \tilde{q}$ almost everywhere in $\Omega$ with the assumption that
$\tilde{q}_n\stackrel{\ast}{\rightharpoonup} \tilde{q}$ with respect to weak-$\ast$ convergence in $L^{\infty}(\Omega)$.
\end{corollary}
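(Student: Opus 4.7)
The plan has three stages. First, upgrade the weak $L^2$ convergence of $u_n$ coming from Proposition~\ref{stabatfinite} to strong $L^2$ convergence, using the uniform higher integrability \eqref{uniformhigher}. Second, derive strong convergence of $\sqrt{\sigma_n}\nabla u_n$ from an energy identity obtained by testing \eqref{weakformbis} with $u_n$ itself. Third, observe that once strong $L^2$ convergence of the solutions is in hand, the a.e.\ convergence of $\tilde q_n$ can be relaxed to weak-$*$ convergence in $L^\infty(\Omega)$.

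For the first stage, Proposition~\ref{stabatfinite} already yields, up to a subsequence, a limit $u \in W^{1,2}(D)$ with $(u_n,\nabla u_n) \rightharpoonup (u,\nabla u)$ weakly in $L^2(\Omega;\R^{N+1})$. The hypothesis \eqref{uniformhigher} is exactly the uniform higher integrability \eqref{higher} needed by Remark~\ref{higherMosco} (with $p=2$), so that remark applies and gives $u_n \to u$ strongly in $L^2(\Omega)$.

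For the second stage, testing \eqref{weakformbis} with $v = u_n \in W^{1,2}(D_n)$ yields
\[ \int_{\Omega} \bigl\lvert \sqrt{\sigma_n}\,\nabla u_n \bigr\rvert^2 \;=\; \int_{\Omega} \tilde q_n\, u_n^2 \;+\; \int_{\Omega} F_n \cdot \nabla u_n \;-\; \int_{\Omega} f_n\, u_n , \]
with the usual extensions by $0$. I would pass to the limit term by term: since $u_n \to u$ strongly in $L^2(\Omega)$, one has $u_n^2 \to u^2$ strongly in $L^1(\Omega)$, and together with the uniform bound $\|\tilde q_n\|_\infty \le c_1$ and $\tilde q_n \to \tilde q$ a.e.\ (for now), dominated convergence gives $\int \tilde q_n u_n^2 \to \int \tilde q\, u^2$; strong convergence $F_n \to F$ in $L^2$ paired with weak convergence $\nabla u_n \rightharpoonup \nabla u$ handles the second integral; weak convergence $f_n \rightharpoonup f$ paired with strong $u_n \to u$ handles the third. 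Testing the limit equation \eqref{weakform} with $v = u$ (which is legitimate since $u$ solves it by Proposition~\ref{stabatfinite}) gives $\int \bigl\lvert \sqrt{\sigma}\nabla u \bigr\rvert^2 = \int \tilde q\, u^2 + \int F \cdot \nabla u - \int f u$, so we conclude $\int |\sqrt{\sigma_n}\nabla u_n|^2 \to \int |\sqrt{\sigma}\nabla u|^2$. Combined with the weak convergence $\sqrt{\sigma_n}\nabla u_n \rightharpoonup \sqrt{\sigma}\nabla u$ in $L^2(\Omega;\R^N)$ already provided by Proposition~\ref{stabatfinite}, the convergence of norms forces strong convergence in the Hilbert space $L^2(\Omega;\R^N)$.

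For the third stage, the only places where $\tilde q_n \to \tilde q$ a.e.\ enters are the terms $\int_{D_n} \tilde q_n u_n v_n$ in passing to the limit in \eqref{weakformbis} (with $v_n$ the recovery sequence for an arbitrary $v \in W^{1,2}(D)$ from Mosco condition (M2)) and the term $\int \tilde q_n u_n^2$ in the energy identity above. Thanks to strong $L^2$ convergence of $u_n \to u$ and $v_n \to v$, the products $u_n v_n \to u v$ and $u_n^2 \to u^2$ converge strongly in $L^1(\Omega)$; pairing a strongly $L^1$-convergent sequence with a weak-$*$ convergent sequence in $L^\infty(\Omega)$ gives the desired convergence of integrals. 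So the whole argument goes through with $\tilde q_n \stackrel{\ast}{\rightharpoonup} \tilde q$. The principal difficulty is the second stage: without the strong $L^2$ convergence secured in the first stage (which in turn relies crucially on Remark~\ref{higherMosco} and hence on \eqref{uniformhigher}), the quadratic term $\int \tilde q_n u_n^2$ cannot be handled, and the energy identity that drives the strong convergence of $\sqrt{\sigma_n}\nabla u_n$ would fail.
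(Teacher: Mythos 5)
Your proposal is correct and follows essentially the same route as the paper: invoke Remark~\ref{higherMosco} (with the uniform higher integrability \eqref{uniformhigher}) to upgrade weak $L^2$ convergence of $u_n$ to strong; test \eqref{weakformbis} with $v=u_n$, pass to the limit term by term to get $\int_{D_n}\sigma_n\nabla u_n\cdot\nabla u_n\to\int_D\sigma\nabla u\cdot\nabla u$, and conclude strong $L^2$ convergence of $\sqrt{\sigma_n}\nabla u_n$ from norm convergence plus weak convergence; finally, observe that strong $L^2$ convergence of $u_n$ and of the recovery sequences $v_n$ turns the products $u_nv_n$ into strongly $L^1$-convergent sequences, so pairing with a weak-$*$ convergent sequence in $L^\infty$ suffices. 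The paper's proof is just a more compressed statement of the same three observations.
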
	

\begin{proof} The first part immediately follows by Remark~\ref{higherMosco}. Then, provided $F_n\to F$ strongly in $L^2(\Omega;\R^N)$,
by the weak formulation \eqref{weakformbis} with $v=u_n$, we infer that
$$\int_{D_n}\sigma_n\nabla u_n\cdot\nabla u_n\to\int_{D}\sigma\nabla u\cdot\nabla u\quad\text{as }n\to\infty$$
which guarantees the strong convergence of $\sqrt{\sigma_n}\nabla u_n$ to $\sqrt{\sigma}\nabla u$. About the last remark, if $u_n\to u$ and $v_n\to v$ strongly in $L^2(\Omega)$, then it is enough that $\tilde{q}_n\stackrel{\ast}{\rightharpoonup} \tilde{q}$ with respect to weak-$\ast$ convergence in $L^{\infty}(\Omega)$ to obtain that
$$\int_{D_n}\tilde{q}_nu_nv_n\to \int_D\tilde{q}uv\quad\text{as }n\to\infty$$
and the proof is concluded.
\end{proof}

\begin{remark}
If we further assume that \eqref{helmholtz:var}-\eqref{helmholtz:varBC} admits a unique solution, then the convergences in Proposition~\ref{stabatfinite} and
in Corollary~\ref{stabatfinitecor} hold
without passing to subsequences.
\end{remark}

\begin{definition}
	The equation $\divergence (\sigma\nabla u)+qu=0$, with $\sigma$ uniformly elliptic tensor in $D$ and $q\in L^{\infty}(D)$, satisfies the \emph{unique continuation property} (UCP) in an open connected set $D$ if for any solution $u$ that vanishes in an open not empty subset $D_1 \subset D$, it follows that $u \equiv 0$ in all of $D$.
		\end{definition}

\begin{remark}
For $N=2$, $\sigma$ uniformly elliptic tensor and $q\in L^{\infty}$ are enough to guarantee the UCP; in particular, no regularity of $\sigma$ is required. For $N\geq 3$, some regularity of $\sigma$ is needed, indeed a sufficient assumption is that $\sigma$ is locally Lipschitz in $D$, see for instance \cite{three:spheres}.
\end{remark}

Following \cite{ball}, and the generalisation in \cite{rondi:em}, we show that UCP holds if $\sigma$ is piecewise Lipschitz in the following sense.

\begin{proposition}
\label{ucp:prop}
	Let $D\subset \R^N$ be an open set and $\sigma$ be a uniformly elliptic tensor in $D$ and $q\in L^{\infty}(D)$.
	Assume that there exists a family $\{D_i\}_{i\in I}$ of pairwise disjoint domains contained in $D$ such that
		\[ D \subset \bigcup_{i\in I} \overline{D_i} \]
		and $\abs{\Sigma} = 0$ where 
		\[ \Sigma = D \cap \bigcup_{i\in I} \partial D_i. \]
		We say that $x \in \Sigma$ exactly separates two parts if there exist $\delta > 0$ and two different indices $i$ and $j$ such that
		$\abs{ B_{\delta}(x) \setminus (D_i \cup D_j)} = 0 $
and $B_{\delta}(x) \cap D_i$ and $B_{\delta}(x) \cap D_j$ are not empty. Then we assume that $\tilde{D} = D \setminus C$ is connected, where
		\[ C = \{ x \in \Sigma \mid \text{$x$ does not exactly separate any two parts} \}. \]
		Finally, we assume that for any compact set $K$ contained in $D$ we have that $\sigma|_{K\cap D_i}$ is Lipschitz for any $i\in I$.
		
	Then, the equation
	$\divergence (\sigma \nabla u) + k^2 q u = 0 $
	satisfies the UCP in $D$.
\end{proposition}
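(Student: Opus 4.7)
The plan is to adapt the two-phase transmission argument of \cite{ball}, in the form used in \cite{rondi:em}. Let $u \in W^{1,2}_{loc}(D)$ satisfy $\divergence(\sigma \nabla u) + k^2 q u = 0$ in $D$ and vanish on a nonempty open subset $D_1 \subset D$. I would introduce the vanishing set
\[ V := \{ x \in D \mid u \equiv 0 \text{ in some open neighbourhood of } x \}, \]
which is open by construction and contains $D_1$. Since $|C| \le |\Sigma| = 0$ and weak solutions of the equation are locally continuous by De~Giorgi--Nash--Moser, it suffices to prove $\tilde{D} \subset V$: then $u = 0$ almost everywhere in $D$, hence everywhere. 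Because $\tilde{D}$ is connected and $V \cap \tilde{D}$ is a nonempty open subset of $\tilde{D}$, the goal reduces to showing that $V \cap \tilde{D}$ is also closed in $\tilde{D}$.

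First I would establish the interior step: if $u$ vanishes on some open subset of $D_i$, then $u \equiv 0$ on the whole of $D_i$. This is the classical UCP for divergence-form equations with locally Lipschitz coefficients, applied on the connected open set $D_i$; its hypotheses hold since $\sigma|_{K \cap D_i}$ is Lipschitz for every compact $K \subset D$, see for instance \cite{three:spheres}. I would also observe that any $y \in V$ can be approximated by points of $V \cap \bigcup_i D_i$, because $V$ is open and $|\Sigma| = 0$.

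Let $x_* \in \tilde{D}$ be an accumulation point of $V \cap \tilde{D}$ and take $y_n \in V \cap \bigcup_i D_i$ with $y_n \to x_*$. If $x_* \in D_i$ for some $i$, then eventually $y_n \in D_i$ and the interior step yields $u \equiv 0$ on $D_i$, so $x_* \in V$. Otherwise $x_* \in \Sigma \setminus C$, so there exist $\delta > 0$ and indices $i \neq j$ with $\overline{B_\delta(x_*)} \subset D$, $|B_\delta(x_*) \setminus (D_i \cup D_j)| = 0$, and both $B_\delta(x_*) \cap D_i$, $B_\delta(x_*) \cap D_j$ nonempty. Up to a subsequence, the $y_n$ lie in one of the two parts, say $D_i$, and the interior step gives $u \equiv 0$ on $D_i$. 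It remains to propagate the vanishing to $D_j$.

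The crux is precisely this propagation step. Inside $B_\delta(x_*)$, $u$ weakly solves the equation with coefficient $\sigma$ whose restrictions to $B_\delta(x_*) \cap D_i$ and to $B_\delta(x_*) \cap D_j$ are both Lipschitz, while $B_\delta(x_*) \setminus (D_i \cup D_j)$ is a null set; moreover $u$ vanishes on the nonempty open set $B_\delta(x_*) \cap D_i$. This is exactly the two-phase transmission UCP framework of \cite{ball}, generalised in \cite{rondi:em}, which yields $u \equiv 0$ on $B_\delta(x_*)$, hence on a nonempty open subset of $D_j$; a further application of the interior step then forces $u \equiv 0$ on $D_j$, so $x_* \in V$. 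The main obstacle is this two-phase step: classical Carleman or three-spheres UCP does not apply directly, since $\sigma$ has a jump across the interface $\partial D_i \cap \partial D_j \cap B_\delta(x_*)$ on which no regularity is assumed. This is exactly why $C$ is defined to exclude all points of $\Sigma$ that are not of exactly two-phase type, ensuring that around any point of $\Sigma \setminus C$ only the Ball-type argument is needed.
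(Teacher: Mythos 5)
Your proposal is correct and follows essentially the same route as the paper, which simply defers to \cite[Proposition~2.13]{rondi:em} (in turn built on \cite{ball}): define the open vanishing set, propagate inside each $D_i$ by classical UCP for Lipschitz divergence-form coefficients, cross the interfaces at exactly-separating points of $\Sigma\setminus C$ via the Ball--Capdeboscq--Tsering-Xiao two-phase replacement-coefficient trick, and conclude by connectedness of $\tilde D$ together with $|\Sigma|=0$ and continuity of weak solutions.
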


\begin{proof} The argument is exactly the same used to prove \cite[Proposition 2.13]{rondi:em}. We just note that under our assumptions, $D$ is necessarily connected and the set of indexes $I$ is at most numerable.
\end{proof}

\begin{remark}
	By \cite[Lemma 2.14]{rondi:em}, we can replace the assumption that $\tilde{D} = D \setminus C$ is connected with the simpler one that 
$D$ is connected and $\mathcal{H}^s(C) < +\infty$ for some $s < N-1$.
\end{remark}

The main purpose of this section is to prove the stability of the scattering problem 
\begin{equation}
	\label{scattering:var}
	\left\{ \begin{array}{ll}
		\divergence (\sigma \nabla u) + k^2 q u = 0 &\text{in } G=\R^N \setminus K \\
		u = u^i + u^s  &\text{in } G \\
		\sigma \nabla u \cdot \nu = 0 & \text{on }\partial G= \partial K \\
		\displaystyle\lim_{r\to +\infty} r^{(N-1)/2} \left( \frac{\partial u^s}{\partial r} - \mathrm{i}k u^s \right)=0 & \text{with } r = \abs{x}  
		\end{array}
	\right.
\end{equation}
with respect to the \emph{sound-hard} scatterer $K$, the coefficients $\sigma$ and $q$, the \emph{incident field} $u^i$ and the wavenumber $k>0$.
The function $u$ is called the \emph{total field}, whereas $u^s$ is called the \emph{scattered field}.
About the coefficients, we assume $\sigma$ to be a uniformly elliptic tensor in $G$, whereas $q\in L^{\infty}(G)$ satisfies, for some constants $0<c_0<c_1$, 
	\begin{equation} \label{ellcond2}
	c_0  \le q(x) \le c_1 \quad \text{for a.e. } x\in G.
	\end{equation}
We always assume that the space is isotropic outside a ball $\overline{B_{R_0}}$, for some $R_0 >0$, that is,
	\begin{equation}\label{isotropic}
	\sigma \equiv I_N \text{ and } q \equiv 1 \quad \text{in } \R^N \setminus \overline{B_{R_0}}.
	\end{equation}
The limit in the last line of \eqref{scattering:var} is the so-called \emph{Sommerfeld radiation condition} and it means that the scattered field is, outside $\overline{B_{R_0}}$, a radiating solution to the Helmholtz equation $\Delta u+k^2u=0$.
The incident field is typically an entire solution to the Helmholtz equation $\Delta u+k^2u=0$, here we limit ourselves to consider $u^i$ as a \emph{plane wave} with wavenumber $k$ and \emph{direction of propagation} $d\in \mathbb{S}^{N-1}$, namely
\begin{equation}\label{planewave}
u^i(x):=\rme^{\mathrm{i}kx\cdot d}\quad \text{for any }x\in \R^N.
\end{equation}

\begin{remark} About the scattering problem \eqref{scattering:var}, we refer to \cite{colton:kress,wilcox}. We note that,
under the previous assumptions, if $K\subset \overline{B_R}$, for some $R>0$, we say that $u$ is a weak solution to the exterior boundary value problem \eqref{scattering:var} if, for any $s>R$, $u\in W^{1,2}(G_s;\C)$, it satisfies
\begin{multline*}
\int_{G_s}\sigma\nabla u\cdot \nabla v-k^2\int_{G_s}quv=0\\\text{for any }v\in W^{1,2}(G_s;\C)\text{ such that }v=0\text{ on }\partial B_s,
\end{multline*}
and $u^s=u-u^i$ satisfies the limit in the Sommerfeld radiation condition, which has to hold uniformly with respect to all directions. 

We note that a weak solution to \eqref{scattering:var} is unique provided the Helmholtz equation 
$\divergence (\sigma \nabla u) + k^2 q u = 0$ has the UCP in $G=\R^N\setminus K$. Concerning the existence of a solution, we still need the UCP property of the equation together with a mild regularity of $K$, namely that $G_{R+1}=B_{R+1}\setminus K$ satisfies the RCP$_2$.
\end{remark}

Unfortunately, for stability, following \cite{rondi:ac}, RCP$_2$ is not enough and a stronger assumption is required, namely a uniform higher integrability of $W^{1,2}(G_{R+1})$ functions such as in Corollary~\ref{stabatfinitecor}. The following stability result just extends to the variable coefficients case Proposition~2.15 in \cite{rondi:ac}. The proof is completely analogous, once Proposition~\ref{stabatfinite} and Corollary~\ref{stabatfinitecor} are established, and we omit it.

\begin{theorem}[Stability]\label{teostab}
For any $n\in\N$, let $K_n$ be a scatterer contained in $\overline{B_R}$, for some fixed $R$. We call $D_n=B_{R+1}\setminus K_n$ and we assume that
there exists $p>2$ and $C_1>0$ such that for any $n\in \N$
\eqref{uniformhigher} holds.

Let $\sigma_n$ be a uniformly elliptic tensor in $\R^N$ satisfying \eqref{ellcond} with constants $0<\lambda_0<\lambda_1$, and let $q_n\in L^{\infty}(\R^N)$
satisfying \eqref{ellcond2} with constants $0<c_0<c_1$. We also assume that, for some $R_0>0$, $\sigma_n$ and $q_n$ satisfy \eqref{isotropic}.

Let $d_n\in \mathrm{S}^{N-1}$ and $k_n>0$, and let the incident field be given by $u^i_n(x)=
\rme^{\mathrm{i}k_nx\cdot d_n}$, $x\in \R^N$.

Let $u_n$ be a weak solution to
$$	\left\{ \begin{array}{ll}
		\divergence (\sigma_n \nabla u_n) + k_n^2 q_n u_n = 0 &\text{in } G^n=\R^N \setminus K_n \\
		u_n = u_n^i + u_n^s  &\text{in } G^n \\
		\sigma_n \nabla u_n \cdot \nu = 0 & \text{on }\partial G^n= \partial K_n \\
		\displaystyle\lim_{r\to +\infty} r^{(N-1)/2} \left( \frac{\partial u_n^s}{\partial r} - \mathrm{i}k_n u_n^s \right)=0 & \text{with } r = \abs{x}. 
		\end{array}
	\right.
$$

 We assume that $K_n$ converges to $K$ in the Hausdorff distance, $W^{1,2}(D_n)$ Mosco converges to $W^{1,2}(D)$, with $D=B_{R+1}\setminus K$,   $\sigma_n$ converges to $\sigma$ almost everywhere in $B_{R_0}$, $q_n$ converges to $q$ almost everywhere in $B_{R_0}$ or 
 with respect to weak-$\ast$ convergence in $L^{\infty}(B_{R_0})$, $d_n$ converges to $d\in\mathbb{S}^{N-1}$ and $k_n$ converges to $k\in \R$,
 
 We further assume that $K$ is a scatterer, $\divergence (\sigma \nabla u) + k^2 q u = 0$ satisfies the UCP in $G=\R^N\setminus K$, and that, only if $N=2$, $k>0$. 
 
Then $(u_n,\sqrt{\sigma_n}\nabla u_n)$ converges to $(u,\sqrt{\sigma}\nabla u)$ strongly in $L^2(B_s;\R^{N+1})$ for any $s>0$, where
$u$ solves \eqref{scattering:var} with incident field $u^i$ given by \eqref{planewave}.
\end{theorem}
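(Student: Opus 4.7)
The plan is to reduce the proof to the framework provided by Proposition~\ref{stabatfinite} and Corollary~\ref{stabatfinitecor}, which already handle the Mosco-based passage to the limit in the interior region $D_n = B_{R+1} \setminus K_n$ once a uniform $L^2$ bound on the total fields $u_n$ is available. The argument splits into three stages: (i) establishing a uniform bound $\|u_n\|_{L^2(D_n)} \le C$; (ii) invoking the abstract convergence results on $D_n$; (iii) propagating the convergence to any ball $B_s$ while recovering the Sommerfeld radiation condition for the limit scattered field.

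Stage (i) is the main obstacle, and I would mimic the contradiction argument of \cite{rondi:ac}. Suppose $M_n := \|u_n\|_{L^2(D_n)} \to +\infty$ and set $v_n := u_n/M_n$. Each $v_n$ solves the same scattering problem with rescaled incident field $u_n^i/M_n$, which converges to zero uniformly on compacts. Testing the weak formulation against $v_n$ and using \eqref{uniformhigher} to absorb the zero-order term yields a uniform $W^{1,2}(D_n)$ bound. Corollary~\ref{stabatfinitecor} then gives, up to a subsequence, strong $L^2$ convergence $v_n \to v$ with $v \in W^{1,2}(D)$, $D = B_{R+1}\setminus K$, solving the \emph{homogeneous} limit Neumann problem. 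For the exterior part, each $v_n^s$ is a radiating solution of $\Delta v_n^s + k_n^2 v_n^s = 0$ on $\R^N \setminus \overline{B_{R_0}}$; using uniform traces on an intermediate sphere $\partial B_{R'}$ with $R_0 < R' < R+1$, together with the Green representation for radiating Helmholtz solutions and the continuous dependence of the fundamental solution on the wavenumber, one passes to the limit and exhibits $v$ outside $\overline{B_{R_0}}$ as a global radiating solution of $\Delta v + k^2 v = 0$. Rellich's lemma (here $k>0$ is needed when $N=2$) then forces $v$ to vanish outside a large ball, and the UCP hypothesis for the limit equation in $G = \R^N \setminus K$ propagates this to $v \equiv 0$ on $D$. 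Since $|D_n \Delta D| \to 0$ by Corollary~\ref{convmosco:diffsimm}, one has $\|v\|_{L^2(D)} = 1$, a contradiction.

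With the uniform bound in hand, stage (ii) is immediate: Proposition~\ref{stabatfinite} and Corollary~\ref{stabatfinitecor} yield, up to a subsequence, strong $L^2(D)$ convergence $u_n \to u$ with $u \in W^{1,2}(D)$ satisfying the interior Neumann problem driven by the limiting coefficients $\sigma$, $q$ and limiting incident field $u^i$ given by \eqref{planewave}, together with strong convergence of $\sqrt{\sigma_n}\nabla u_n$ to $\sqrt{\sigma}\nabla u$ in $L^2(D;\R^N)$. Stage (iii) again exploits the Green representation of $u_n^s = u_n - u_n^i$ on $\R^N \setminus \overline{B_{R'}}$: the uniform trace bounds on $\partial B_{R'}$ together with continuity of the fundamental solution in the wavenumber allow passage to the limit in $L^2_{\mathrm{loc}}(\R^N \setminus \overline{B_{R'}})$, identifying the limit as $u - u^i$ and automatically inheriting the Sommerfeld radiation condition with wavenumber $k$. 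Uniqueness of the limit scattering problem, guaranteed by UCP together with Rellich's lemma, removes the need to pass to subsequences and yields convergence in $L^2(B_s;\R^{N+1})$ for every $s>0$.
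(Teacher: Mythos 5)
Your proposal follows exactly the route the paper intends: the paper omits the proof, declaring it ``completely analogous'' to Proposition~2.15 of \cite{rondi:ac} once Proposition~\ref{stabatfinite} and Corollary~\ref{stabatfinitecor} are in place, and you correctly reconstruct that scheme (a priori $L^2$ bound by rescaling and contradiction, Mosco-based interior convergence, Green representation plus Rellich and UCP in the exterior). One small misattribution: condition~\eqref{uniformhigher} is not what absorbs the zero-order term when you test the rescaled weak formulation against $v_n$ --- the $L^2$ normalisation together with $\|\tilde q_n\|_\infty \le c_1$ already does that, after the standard cutoff near $\partial B_{R+1}$; the uniform higher integrability is instead what upgrades the weak $L^2$ convergence to strong $L^2$ convergence through Remark~\ref{higherMosco} and Corollary~\ref{stabatfinitecor}, which is indispensable both for identifying $\|v\|_{L^2(D)}=1$ in the contradiction and for passing to the limit in the $k_n^2 q_n u_n v_n$ term.
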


\begin{remark} For $N\geq 3$, we allow the limit $k$ to be equal to $0$. For $k=0$, the Sommerfeld radiation condition has to be replaced by
$$u^s=o(1)\quad\text{as }r=|x|\to+\infty,$$
with the limit holding uniformly with respect to all directions.
For $N=2$, instead, $k$ must be strictly positive, since the limit for small wavenumbers is troublesome. For further details, see \cite{kress} and  the discussion in \cite[Section~3]{rondi:uniquedet}.
\end{remark}

In the sequel we construct classes of scatterers and of coefficients satisfying the assumptions of Theorem~\ref{teostab} and such that, up to subsequences, they satisfy the convergences required in Theorem~\ref{teostab}.

\begin{definition}
A set $K\subset \overline{B_R}$ belongs to $\mathcal{SC}_l(R,r,L,M_0,a,\delta_0,\gamma)$, with parameters $r>0$, $L>0$, $M_0\in\N$, $a>0$, $\delta_0>0$ and $\gamma$ modulus of continuity, if $K$ is compact, $\partial K\in \widehat{\mathcal{FR}}_l(r,L,M_0,a,\delta_0)$ and
	$K$ satisfies the \emph{uniform exterior connectedness property} with function $\gamma$, that is, for any $t>0$, for any $x_1,x_2 \in \R^N$ such that $B_t(x_1)$ and $B_t(x_2)$ are contained in $\R^N \setminus K$ and for any $0<s<\gamma(t)$, one can find a smooth curve $\Gamma$, for instance of regularity $\mathcal{C}^1$, which connects the points $x_1$ and $x_2$ and such that $B_s(\Gamma)$ is still contained in $\R^N \setminus K$.
\end{definition}

Observe that the uniform exterior connectedness property implies that $\R^N \setminus K$ is connected. Moreover, this condition is also closed with respect to convergence in the Hausdorff distance, see
\cite[Lemma 2.5]{rondi:uniquedet}.

\begin{definition}
\label{class:N}
	We say that $$(\sigma,q,k)\in\mathcal{N}=\mathcal{N}(R_0, r,L,M_0,\omega,\lambda_0,\lambda_1,c_0,c_1,\underline{k},\overline{k})$$ with parameters $R_0>0$, $r>0$, $L>0$, $M_0\in \N$, $0<\lambda_0<\lambda_1$, $0<c_0<c_1$, $0<\underline{k}<\overline{k}$ and $\omega$ modulus of continuity if the following holds.
	First, $\sigma\in L^{\infty}(\R^N;M_{sym}^{N\times N}(\R))$ is a uniformly elliptic tensor in $\R^N$ with constants $0<\lambda_0<\lambda_1$, that is, \eqref{ellcond} holds with $D=\R^N$, $q\in L^{\infty}(\R^N)$ satisfies \eqref{ellcond2} with $D=\R^N$, and \eqref{isotropic} holds. The number $k\in\R$ satisfies
	 \begin{equation}\label{kcondit}
 0< k < \overline{k}\text{ if }N \ge 3\text{ or }0 < \underline{k} < k < \overline{k}\text{ if }N = 2.
 \end{equation}
	Finally, $\sigma$ has the following regularity.
If $N=2$, we assume that any entry of the matrix $\sigma$ has total variation on $B_{R_0}$ bounded by $L$.
	If $N\geq 3$, we assume that there exists $K \in \mathcal{FR}(r,L,M_0,\omega)$, with respect to $\Omega=B_{R_0}$, depending on $\sigma$, satisfying the following properties. Let $D_0$ be the unbounded connected component of $\R^N \setminus K$ and let $D_i$, $i=1,\ldots,\tilde{M}$, be the bounded ones. Then, $\sigma|_{D_0} \equiv I_N$, while $\sigma|_{D_i}$ is Lipschitz with Lipschitz constant bounded by $L$, for every $i=1,\ldots,\tilde{M}$.
\end{definition}

We note that $\tilde{M}$ depends on $K$, thus on $\sigma$, but, by Lemma~\ref{raggio:finale}, it is bounded by a constant depending on $R$, $r$, $L$, $M_0$ and $\omega$ only. Moreover, for any $(\sigma,q,k)$ in such a class, the equation
$\divergence(\sigma \nabla u)+k^2qu=0$ satisfies the UCP, thanks to Proposition~\ref{ucp:prop}.

\begin{proposition}\label{classNconverg}
For any $n\in\N$, let $(\sigma_n,q_n,k_n)\in \mathcal{N}$, where
$\mathcal{N}$ is as in Definition~\textnormal{\ref{class:N}}, with given parameters.
Then, up to a subsequence, $\sigma_n\to \sigma$ almost everywhere in $\R^N$ and also in $L^p(\mathbb{R}^N)$ for any $1\leq p<+\infty$, $q_n$
converges to $q$ with respect to weak-$\ast$ convergence in $L^{\infty}(B_{R_0})$ and $k_n\to k\in \R$ where $\sigma$, $q$ and $k$ satisfy the following properties.

We have that $\sigma$ is a uniformly elliptic tensor satisfying \eqref{ellcond} with $D=\R^N$, 
$q\in L^{\infty}(\R^N)$ satisfies \eqref{ellcond2} with $D=\R^N$, and \eqref{isotropic} holds. The number $k\in\R$ is such that $0\leq k\leq \overline{k}$, with
$k\geq \underline{k}>0$ if $N=2$. Finally, the equation
$\divergence(\sigma \nabla u)+k^2qu=0$ satisfies the UCP.
\end{proposition}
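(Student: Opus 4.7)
The plan is to treat the three objects $(\sigma_n, q_n, k_n)$ independently to extract convergent subsequences, and then verify UCP for the limiting equation. For $k_n$, the bounded interval gives a convergent subsequence by Bolzano--Weierstrass with the stated limit bounds. For $q_n$, uniform $L^\infty$-boundedness and Banach--Alaoglu give a weak-$\ast$ convergent subsequence on $B_{R_0}$; the pointwise bounds $c_0\leq q\leq c_1$ pass to the limit because $\{f\in L^\infty : c_0\leq f\leq c_1 \text{ a.e.}\}$ is convex and strongly closed, hence weak-$\ast$ closed. Extending by $1$ outside $\overline{B_{R_0}}$ recovers \eqref{ellcond2} and \eqref{isotropic}.

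For the tensor $\sigma_n$ the argument splits by dimension. If $N=2$, each entry has total variation bounded by $L$ and is uniformly bounded, so Helly's compactness theorem for $BV$ delivers a subsequence converging pointwise a.e.\ and, by dominated convergence, in every $L^p(B_{R_0})$ with $p<\infty$; ellipticity bounds pass pointwise. If $N\geq 3$, I use the structural assumption: let $K_n\in\mathcal{FR}(r,L,M_0,\omega)$ with respect to $B_{R_0}$ be the set associated to $\sigma_n$. Proposition~\ref{convh:finale2} extracts a subsequence with $K_n\to K$ in the Hausdorff distance, $K\in\mathcal{FR}(r,L,M_0,\omega)$. Using the uniform control from Lemma~\ref{raggio:finale} and Remark~\ref{upperbound:finale}, the number of bounded components $D_i^n$ of $\R^N\setminus K_n$ is uniformly bounded, so up to a further subsequence it equals a constant $\tilde M$. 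Each restriction $\sigma_n|_{D_i^n}$ is Lipschitz with constant at most $L$ and bounded by $\lambda_1$; McShane extends it to a Lipschitz $\tilde\sigma_{n,i}$ on $\overline{B_{R_0+1}}$ with the same Lipschitz constant, and Arzel\`a--Ascoli yields a uniform subsequential limit $\tilde\sigma_i$.

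The key matching step is then to assign each limit $\tilde\sigma_i$ to the correct component of $\R^N\setminus K$. For any bounded component $E$ of $\R^N\setminus K$ and any $x\in E$, one has $\dist(x,K)>0$ and thus $\dist(x,K_n)>0$ for large $n$, so $x$ lies in a component $D_{j(n,x)}^n$; by local constancy of $j(n,\cdot)$ on relatively compact open subsets of $E$, together with connectedness of $E$ and a diagonal extraction, a single index $j(E)$ can be chosen so that compact subsets of $E$ are eventually contained in $D_{j(E)}^n$. One then defines $\sigma:=\tilde\sigma_{j(E)}$ on each bounded $E$ and $\sigma:=I_N$ on the unbounded component and outside $\overline{B_{R_0}}$. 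This gives a tensor satisfying \eqref{ellcond} and \eqref{isotropic}, Lipschitz on each component of $\R^N\setminus K$. Since $|K|=0$, pointwise a.e.\ convergence $\sigma_n\to\sigma$ follows, and dominated convergence upgrades it to $L^p_{loc}(\R^N)$ for every $p<\infty$, which is the stated conclusion.

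For UCP: when $N=2$ it is automatic from ellipticity and $L^\infty$-boundedness. For $N\geq 3$ I would apply Proposition~\ref{ucp:prop} to $D=\R^N$ with the connected components of $\R^N\setminus K$ as the family $\{D_i\}$: they are pairwise disjoint domains, $\R^N\subset\bigcup_i\overline{D_i}$, $\Sigma=K$ has Lebesgue measure zero since $K$ is a finite union of Lipschitz hypersurfaces, and $\sigma$ is Lipschitz on each $D_i$ by construction. A point $x\in K\setminus\sing(K)$ lies in a unique $K^i$ by Definition~\ref{classe:finale2}(2) and in the interior of the associated $\mathcal{MR}(r,L)$ piece, hence locally separates exactly two parts; therefore $C\subset\sing(K)$, which by Remark~\ref{upperbound:finale} has finite $\mathcal{H}^{N-2}$ measure, and the remark following Proposition~\ref{ucp:prop} then yields UCP. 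The main obstacle is the $N\geq 3$ construction of $\sigma$: one must simultaneously control the Hausdorff behaviour of the complementary components under $K_n\to K$, carry out the index-matching so that each limit component inherits a single Lipschitz representative (ruling out the scenario in which distinct $D_i^n$ coalesce into the same limit component, which would obstruct defining $\sigma$ as one Lipschitz function there), and keep the uniform bound on the number of components; all of this rests on the geometric compactness of the class $\mathcal{FR}(r,L,M_0,\omega)$.
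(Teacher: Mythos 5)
Your extraction arguments for $q_n$ and $k_n$, the Helly/$BV$ compactness argument for $\sigma_n$ when $N=2$, and the $N\geq 3$ construction via Hausdorff-compactness of $\mathcal{FR}(r,L,M_0,\omega)$, component counting, Lipschitz extension and Arzel\`a--Ascoli all coincide in substance with the paper's proof (which follows \cite{rondi:em}); your ``index--matching'' is merely the mirror-image bookkeeping of what the paper does (the paper tracks the Hausdorff limit $G_i$ of each $K_n\cup D_i^n$ and sets $\sigma|_{\intpart G_i}=\sigma_i$, while you track, for each component $E$ of $\R^N\setminus K$, the eventual index of the set $D_{j(E)}^n$ that exhausts it; both need the same connectedness observation that a compact connected subset of $E$ avoids $K_n$ for large $n$ and hence sits in a single $D^n_j$).

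There is, however, a gap in your UCP verification. You assert that every $x\in K\setminus\sing(K)$ exactly separates two parts and therefore $C\subset\sing(K)$, hence $\mathcal{H}^{N-2}(C)<\infty$. But ``exactly separates two parts'' in Proposition~\ref{ucp:prop} requires two \emph{different} indices $i\neq j$ with $B_\delta(x)\cap D_i$ and $B_\delta(x)\cap D_j$ nonempty. If $K$ is locally a hypersurface near $x$ but both of its sides lie in the \emph{same} connected component of $\R^N\setminus K$ --- as happens already for $K$ a flat disc in $\R^3$, where $\R^3\setminus K$ is connected --- then $x$ does not exactly separate two parts in the required sense, so $x\in C$, and $C$ can have positive $\mathcal{H}^{N-1}$ measure. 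Thus $C\subset\sing(K)$ is false in general and the remark after Proposition~\ref{ucp:prop} does not apply directly with this choice of family $\{D_i\}$. One should instead either check the original hypothesis that $\tilde D = D\setminus C$ is connected (which does hold here, since the Lipschitz condition on $\sigma|_{D_i}$ with respect to the ambient Euclidean metric already forces $\sigma$ to be continuous across any portion of $K$ whose two sides lie in the same $D_i$, so nothing is really being ``separated'' there), or simply observe --- as the paper does --- that the limit triple $(\sigma,q,k)$ has exactly the $\mathcal{N}$-structure with associated set $K\in\mathcal{FR}(r,L,M_0,\omega)$ and cite the standing assertion, made right after Definition~\ref{class:N}, that any such configuration satisfies the UCP by Proposition~\ref{ucp:prop}.
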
 

\begin{proof} About $q$ and $k$, the listed convergence properties are straightforward.
About $\sigma$, for $N=2$,  boundedness in $BV$ implies compactness in $L^1$, so also this case is simple, since from convergence in $L^1$ we can deduce, due to uniform boundedness, convergence in $L^p$ for any $p$ finite and, up to a further subsequence, convergence almost everywhere.

About $\sigma$, for $N\geq 3$, we argue like in \cite[Lemma~5.3]{rondi:em}. Due to uniform boundedness, it is enough to prove $L^1$ convergence or almost everywhere convergence.
We can assume, passing to subsequences, that the sets $K_n$ associated to $\sigma_n$ converge in the Hausdorff distance to
$K \in \mathcal{FR}(r,L,M_0,\omega)$. We can also assume that $\tilde{M}_n=\tilde{M}$ for any $n\in \N$. We call $D_i^n$, $i=0,\ldots,\tilde{M}$, the connected components of $\R^N\setminus K_n$, $D_0^n$ being the unbounded one. We can assume that, for any $i=0,\ldots,\tilde{M}$,
$K_n\cup D_i^n$ converges to $G_i$ in the Hausdorff distance. By Proposition~\ref{convh:finale2}, we have that $K_n$ converges to $\partial G_i$ in the Hausdorff distance, and we call $D_i$ the interior of $G_i$. We note that the sets $D_i$ are open, pairwise disjoints and their union is $\R^N\setminus K$.
However, $D_i$ may be the union of more than one connected component of $\R^N\setminus K$.

By
\cite[Theorem 3.1.1]{evans}, we call $\sigma^n_i$, for any $i=1,\ldots,\tilde{M}$ and for any $n\in \N$, an extension of $\sigma_n|_{D^n_i}$ such that
$\sigma^n_i$ is Lipschitz on $\overline{B_{R_0}}$, with Lipschitz constants bounded by a constant, depending only on $N$, times $L$. Still up to subsequences, we can assume that $\sigma^n_i\to \sigma_i$ uniformly on $\overline{B_{R_0}}$, for any $i=1,\ldots,\tilde{M}$. Clearly $\sigma_i$
is Lipschitz on $\overline{B_{R_0}}$.

We take as $\sigma$ the tensor such that $\sigma|_{D_i}=\sigma_i$.
Now let $x\in\R^N\backslash K$ and let $x\in D_i$ for some $i$. We have that, for some $\delta>0$ and any $n\geq n_0$, $B_{\delta}(x)\subset D_i^n$, thus
$\sigma^n(x)=\sigma^n_i(x)\to \sigma(x)$. Since $|K|=0$, we have obtained convergence almost everywhere. By convergence almost everywhere, it is clear that
$\sigma$ is a uniformly elliptic tensor satysfing \eqref{ellcond}. Again by Proposition~\ref{ucp:prop},
we conclude that
$\divergence(\sigma \nabla u)+k^2qu=0$ satisfies the UCP.
\end{proof}

We conclude with the following uniform bound result which follows easily from our previous analysis and the arguments developed in \cite{rondi:ac,rondi:em}.

\begin{theorem}[Uniform bounds]
\label{unif:bounds}
	Let $R>0$, $r>0$, $L>0$, $M_0 \in \N$, $a>0$, $\delta_0>0$, $\gamma$ modulus of continuity, $R_0>0$, $\omega$ modulus of continuity, 
	$0<\lambda_0<\lambda_1$, $0<c_0<c_1$ and $0<\underline{k}<\overline{k}$.
	For any $K\in\mathcal{SC}_l(R,r,L,M_0,a,\delta_0,\gamma)$, any $(\sigma,q,k)\in\mathcal{N}(R_0, r,L,M_0,\omega,\lambda_0,\lambda_1,c_0,c_1,\underline{k},\overline{k})$ and any $d\in\mathbb{S}^{N-1}$, let $u$ be the solution
	to \eqref{scattering:var}, with $u^i$ given by \eqref{planewave}.

	Then, for any $s>0$, there exists $C_1>0$, depending only on the parameters above and on $s$, such that
	\begin{equation}
		\norm{u}_{L^2(B_s \setminus K)}+\norm{\nabla u}_{L^2(B_s \setminus K)} \le C_1.
		\end{equation}
		Furthermore, there exists a constant $C_2$, depending only on the parameters above, such that
	\begin{equation}
	\abs{u^s(x)}\leq \frac{C_2}{|x|^{(N-1)/2}}\quad\text{for any }x\in \R^N\text{ with }|x|\geq \max\{R_0,R\}+1.
		\end{equation}
		
\end{theorem}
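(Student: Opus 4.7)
The plan is to derive both uniform bounds by a compactness-and-contradiction argument based on the stability result Theorem~\ref{teostab}. For the $W^{1,2}$ estimate, I fix $s>0$ and, after enlarging if necessary, assume $s\geq R+1$. Suppose, for contradiction, that the bound fails: there exist sequences $\{K_n\}\subset\mathcal{SC}_l(R,r,L,M_0,a,\delta_0,\gamma)$, $\{(\sigma_n,q_n,k_n)\}\subset \mathcal{N}$ and $\{d_n\}\subset \mathbb{S}^{N-1}$ with associated scattering solutions $u_n$ such that
\[
c_n:=\|u_n\|_{L^2(B_s\setminus K_n)}+\|\nabla u_n\|_{L^2(B_s\setminus K_n)}\to+\infty.
\]
I then extract subsequences. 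Proposition~\ref{convh:finale2}, applied with the common modulus of continuity $\omega(\delta)=a\min\{\delta,\delta_0\}$, yields $K_n\to K$ in the Hausdorff distance with $K\in\mathcal{FR}(r,L,M_0,\omega)$; the uniform exterior connectedness property passes to the limit by \cite[Lemma~2.5]{rondi:uniquedet}, so $K$ is a scatterer. Proposition~\ref{classNconverg} produces a limit triple $(\sigma,q,k)$ with $\sigma_n\to\sigma$ almost everywhere, $q_n\stackrel{\ast}{\rightharpoonup}q$ in $L^\infty$ and $k_n\to k$ (with $k\geq\underline{k}>0$ when $N=2$), and Proposition~\ref{ucp:prop} provides UCP for the limit equation. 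Finally $d_n\to d\in\mathbb{S}^{N-1}$. The Mosco convergence of $W^{1,2}(B_{R+1}\setminus K_n)$ to $W^{1,2}(B_{R+1}\setminus K)$ is furnished by Theorem~\ref{teor:finale}, and the uniform higher integrability condition \eqref{uniformhigher} by Theorem~\ref{sobolevemb:linear}, so all the hypotheses of Theorem~\ref{teostab} are in place in the limit.

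Next I rescale by setting $\tilde u_n:=u_n/c_n$. Each $\tilde u_n$ solves the scattering problem with incident field $\tilde u_n^i=u_n^i/c_n$, which tends to $0$ uniformly on compact subsets of $\R^N$. Applying Theorem~\ref{teostab} to this rescaled sequence yields, along a further subsequence, the strong convergence $(\tilde u_n,\sqrt{\sigma_n}\nabla \tilde u_n)\to (\tilde u,\sqrt{\sigma}\nabla \tilde u)$ in $L^2(B_s;\R^{N+1})$, where $\tilde u$ solves \eqref{scattering:var} with zero incident field. Uniqueness of this exterior problem, which follows from the UCP of the limit equation together with Rellich's lemma when $k>0$ and from the decay $u^s=o(1)$ at infinity when $k=0$ and $N\geq 3$, forces $\tilde u\equiv 0$. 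However, by construction $\|\tilde u_n\|_{L^2(B_s\setminus K_n)}+\|\nabla \tilde u_n\|_{L^2(B_s\setminus K_n)}=1$ for every $n$, and the uniform ellipticity \eqref{ellcond} combined with the strong $L^2$ convergence of $\tilde u_n$ and of $\sqrt{\sigma_n}\nabla \tilde u_n$ to zero forces this normalized quantity to tend to zero, a contradiction.

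The pointwise decay follows quickly. Taking $s=\max\{R_0,R\}+1$, the first bound together with the local uniform boundedness of the plane wave $u^i$ gives a uniform $W^{1,2}$ bound on $u^s=u-u^i$ in the annulus $B_s\setminus \overline{B_{\max\{R_0,R\}}}$. In this exterior region $\sigma\equiv I_N$ and $q\equiv 1$, so $u^s$ is a radiating solution of $\Delta u^s+k^2 u^s=0$ and admits the Green's representation against the outgoing fundamental solution on the sphere $\partial B_{\max\{R_0,R\}+1/2}$; the standard far-field asymptotics of that fundamental solution, and of the Laplacian fundamental solution in the case $k=0$, $N\geq 3$, give the decay $|u^s(x)|\leq C_2|x|^{-(N-1)/2}$ with $C_2$ controlled by the uniform bound on the boundary data, hence by the parameters. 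The main obstacle of the whole argument is to verify that all of the hypotheses of Theorem~\ref{teostab}, most delicately the UCP for the limit coefficients, are preserved under the limit; this is handled by combining Proposition~\ref{classNconverg}, which produces a piecewise Lipschitz limit $\sigma$ of the required structure, with Proposition~\ref{ucp:prop}.
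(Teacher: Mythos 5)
Your overall strategy — compactness of the scatterer and coefficient classes, a contradiction argument, and stability via Theorem~\ref{teostab} — is precisely the route the paper intends (it leaves the proof unwritten, deferring to the arguments of \cite{rondi:ac,rondi:em}), and your assembly of the hypotheses from Propositions~\ref{convh:finale2} and \ref{classNconverg}, Theorem~\ref{teor:finale} and Theorem~\ref{sobolevemb:linear}, together with the Green's representation for the far-field decay, is sound. However, your application of Theorem~\ref{teostab} to $\tilde u_n = u_n/c_n$ is not literal: that theorem is stated for scattering solutions generated by \emph{unit} plane waves $u^i_n(x)=\rme^{\rmi k_n x\cdot d_n}$, whereas $\tilde u_n^i = u_n^i/c_n$ is a vanishing multiple of a plane wave and its limit $0$ is not of the form \eqref{planewave}. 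You would have to extend the theorem to decaying data (which is harmless, using linearity and Proposition~\ref{stabatfinite}/Corollary~\ref{stabatfinitecor}); but it is cleaner to skip the rescaling entirely. Extract a subsequence along which all the hypotheses of Theorem~\ref{teostab} hold; its conclusion that $(u_n,\sqrt{\sigma_n}\nabla u_n)\to(u,\sqrt{\sigma}\nabla u)$ strongly in $L^2(B_s;\R^{N+1})$, together with $\sigma_n\geq\lambda_0 I_N$, already bounds $c_n$ along that subsequence, and the contradiction is immediate — no uniqueness result for the zero-incident-field problem is needed.

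A second point you should state explicitly: $D_n = B_{R+1}\setminus K_n$ is not literally in $\mathcal{O}(r,L,M_0,\omega)$ nor in $\widehat{\mathcal{O}}_l(r,L,M_0,a,\delta_0)$ as those classes are defined relative to the fixed ball $B_R$, and $\partial D_n$ contains the extra sphere $\partial B_{R+1}$. That sphere is a single $\mathcal{MR}$ hypersurface without boundary, disjoint from $K_n\subset\overline{B_R}$, so it can be absorbed into the $\mathcal{FR}$ decomposition with at most one more component and no change in $a$, $\delta_0$; then Theorem~\ref{teor:finale} and Theorem~\ref{sobolevemb:linear} apply with adjusted constants $r'$, $L'$, $M_0'$. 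This is routine but must be said, because those theorems' hypotheses are stated for domains contained in the ball with respect to which the classes are defined.
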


\begin{remark} For $N\geq 3$, we can just set $\underline{k}=0$ both in Definition~\ref{class:N} and in Theorem~\ref{unif:bounds}.
\end{remark}

\appendix

\section{Comparison with the class $\mathcal{G}(r,L,C)$}

Here we show that classes $\mathcal{MR}(r,L,M_0)$ generalise classes $\mathcal{G}(r,L,C)$ introduced in \cite{giacomini}.

\subsection{Cone condition}

First of all, we fix some notation. Let $N\ge 1$, let $B_1,B_2\subset\R^N$ be two open balls such that $B_1$ is centred in $0$ and $0\notin B_2$ and let $x\in\R^N$. We denote \emph{finite closed cone} in $\R^N$ with vertex in $x$ a set which can be described as
\[ 	C_x := x + (\overline{B_1} \cap \{\lambda  y \mid y\in \overline{B_2}, \lambda \ge 0\}). \]
Instead, let $y_1,\ldots,y_N\in\R^N$ be $N$ linearly independent vectors and let $x\in\R^N$. We denote \emph{parallelepiped} in $\R^N$ with vertex in $x$ a set such as
	\[	P_x := x + \left\{ \sum_{j=1}^N \lambda_j y_j \,\Big|\, 0 \le \lambda_j \le 1 \text{ for every } 1\le j\le N \right\}	.\]
Moreover, the \emph{centre} of $P_x$ is $c(P_x) = x + \frac{1}{2} (y_1 + \ldots + y_N)$. Finally, we say that two cones $C$ and $C'$ are \emph{congruent}, and we write $C \cong C'$, if there exists a rototranslation $\Psi$ of $\R^N$ such that $\Psi(C) = C'$ and, analogously, $P \cong P'$ if  $\Psi(P) = P'$. It is clear that every finite closed cone with vertex in $x$ contains a parallelepiped with vertex in $x$ and viceversa.

Since we mostly work with compact sets, we need a modified version of the classical cone property for open sets, as in \cite[Definition 4.3]{adams}.

\begin{definition}
	\label{cone:cond}
	Let $C$ be a finite closed cone in $\R^N$ with vertex in the origin. We say that a compact set $K\subset\R^N$ satisfies the \emph{cone condition with respect to $C$} if for any $x\in K$ there exists a finite closed cone $C^x$ congruent to $C$ such as $x\in C^x \subset K$.
\end{definition} 

Other than using closed cones and not open ones, the main difference with the classical cone property is that we do not require the cone $C^x$ to have vertex in the assigned point $x$. This gives more flexibility thanks to the following lemma.

\begin{lemma}
\label{unione:coni}
	Let $C$ be a finite closed cone in $\R^N$ with vertex in the origin and let $K=\bigcup_{i\in I} C_i$ where $I$ is a set of indices and $C_i \cong C$ for every $i \in I$. Suppose additionally that the vertices of the cones $C_i$ can vary only in a compact set $\overline{B}$. \par
	Then, there exists $\{ C_j \}_{j\in J}$, where $J$ is a set of indices, $C_j \cong C$ for any $j\in J$ and $C_j$ has vertex in $\overline{B}$ for any $j\in J$, such that $\overline{K} = \bigcup_{j\in J} C_j $. Namely, the closure of a union of closed cones is still a union of closed cones. Furthermore, $\overline{K}$ satisfies the cone condition with cone $C$.
\end{lemma}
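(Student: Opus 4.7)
The approach is to produce, for every $x \in \overline{K}$, a single cone $C^x \cong C$ with vertex in $\overline{B}$ satisfying $x \in C^x \subset \overline{K}$. Once this is established, indexing the desired family by the points of $\overline{K}$ themselves, i.e.\ setting $J := \overline{K}$ and $C_x := C^x$ for $x \in \overline{K}$, immediately yields $\overline{K} = \bigcup_{x \in \overline{K}} C^x$ (the inclusion $\supset$ holds because each $C^x \subset \overline{K}$, and $\subset$ because each $x$ lies in its own $C^x$), and simultaneously gives the cone condition in the sense of Definition \ref{cone:cond}.

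The construction relies on compactness. Each cone $C_i$ can be encoded by a pair $(v_i,R_i)\in \overline{B}\times SO(N)$, where $v_i$ is the vertex of $C_i$ and $R_i$ is a rotation chosen so that $C_i = v_i + R_i(C)$; such a choice exists since $C_i \cong C$ and the vertex of $C$ sits at the origin. The parameter space $\overline{B}\times SO(N)$, endowed with the product of the Euclidean and operator-norm topologies, is compact. Fix $x\in\overline{K}$ and pick a sequence $x_n\in K$ with $x_n\to x$; each $x_n$ belongs to some $C_{i_n}$, and up to a subsequence we may assume $(v_{i_n},R_{i_n})\to (v^x,R^x)\in \overline{B}\times SO(N)$.

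Set $C^x := v^x + R^x(C)$. The map $(v,R,y)\mapsto v+Ry$ is continuous on $\overline{B}\times SO(N)\times C$, and $C$ is compact, so the cones $C_{i_n} = v_{i_n}+R_{i_n}(C)$ converge to $C^x$ in the Hausdorff distance. Consequently $C^x \subset \overline{K}$, since every $C_{i_n}\subset K$, and $x\in C^x$, because $x_n\in C_{i_n}$ with $x_n\to x$ forces the limit to lie in the Hausdorff limit of the $C_{i_n}$ (a standard fact about Hausdorff convergence of compact sets, cf.\ the characterisations recalled earlier in Section \ref{notationsec}). By construction $C^x \cong C$ has vertex $v^x \in \overline{B}$, so the family $\{C^x\}_{x\in\overline{K}}$ has all the required properties.

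The only step requiring a bit of care is the Hausdorff continuity of $(v,R)\mapsto v+R(C)$ from $\overline{B}\times SO(N)$ into $(\mathcal{K}(\R^N),d_H)$, but this is routine given the compactness of $C$; everything else is just a repackaging of the compactness of $\overline{B}\times SO(N)$, so I do not anticipate any serious obstacle.
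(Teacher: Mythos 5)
Your proof is correct and follows essentially the same route as the paper's: both arguments exploit the compactness of the set of cones congruent to $C$ with vertex in $\overline{B}$ (the paper identifies it with $\overline{B}\times\mathbb{S}^{N-1}$, you map onto it from $\overline{B}\times SO(N)$, which amounts to the same thing), extract a Hausdorff-convergent subsequence $C_{i_n}\to C^x$, and then read off $x\in C^x\subset\overline{K}$ from the standard characterisations of Hausdorff convergence. No gap, same idea.
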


\begin{proof}
Observe that
the set of closed cones in $\R^N$ congruent to $C$ with vertex in $\overline{B}$, equipped with the metric given by Hausdorff distance,
is compact since it is homeomorphic to the product space $\overline{B} \times \mathbb{S}^{N-1}$.
		
		Let $x\in \overline{K}$, then there exists $\{x_n\}_{n\in\N} \subset K$ such that $x_n\to x$. Since $x_n\in K$, for any $n\in\N$ there exists $ C_{i_n}$ such that $x_n \in C_{i_n}$. Now, by compactness, up to subsequences, $C_{i_n} \to C^x$ in the Hausdorff distance. Then, $C_{i_n} \to C^x$ implies that $C^x \subset \overline{K}$ and that $x\in C^x$. Hence, it is enough to define $J=\overline{K}$, $j=x\in \overline{K}$ and $C_j = C^x$ to obtain that $\overline{K} = \bigcup_{x\in \overline{K}} C^x$. Note that any of these $C^x$ has vertex in $\overline{B}$, so it is immediate to conclude that $\overline{K}$ is compact and that 
		it satisfies the cone condition with cone $C$.
\end{proof}

The following modification of the Gagliardo theorem allows us to decompose compact sets with the cone condition into a finite union of compact sets with Lipschitz boundary, where the number and the regularity of these Lipschitz sets depend only on the cone $C$.  

\begin{theorem}[Gagliardo]
\label{gagliardo}
	Let $C \subset\R^N$ be a finite closed cone with vertex in $0$ and let $K\subset\R^N$ be a compact set with $\diam(K)\le d$ satisfying the cone condition with respect to $C$. Then, for any $ \rho > 0$ there exist $ A_1,\ldots,A_l$ compact subsets of $K$ with $\diam(A_i)\le\rho$ for every $i=1,\ldots,l$ and there exist $ P_1,\ldots,P_l$ congruent parallelepipeds with vertex in $0$ such that the following holds:
	\begin{enumerate}[a\textnormal{)}]
		\item For any $x\in K$ there exists $1\le i \le l$ such that $x+P_i \subset C^x$.
		\item $K = \bigcup_{i=1}^l K_i$ with $K_i = \bigcup_{x\in A_i} (x + P_i)$ compact for every $i=1,\ldots,l$.
	\end{enumerate}
	In particular, the number $l\in\N$ and the parallelepipeds $P_1,\ldots,P_l$ depend only on $C$, $d$ and $\rho$ and not on $K$ specifically.
	
	Moreover, there exist $\bar{\rho} >0$, $\eta >0$ and $L_1>0$, depending only on $C$, such that for any $0<\rho\leq \bar{\rho}$ we have
		that for every $i=1,\ldots,M$ the set $K_i$ is the closure of an open set whose boundary is Lipschitz with constants $\eta$ and $L_1$.
		\end{theorem}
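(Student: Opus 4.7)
The plan is to construct the parallelepipeds $P_i$ and the sets $A_i$ by two compactness arguments, and then to verify the Lipschitz regularity of the $K_i$ separately.

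First I parameterize the family of cones $C^x$ attached to points of $K$. Writing $C^x = R_x(C) + c^x$ for a rotation $R_x \in SO(N)$ and vertex $c^x$, the condition $x \in C^x$ forces $z_x := R_x^{-1}(x - c^x)$ to lie in the compact set $C \cap \overline{B_{\diam(C)}}$, and the inclusion $x + P \subset C^x$ is equivalent to $R_x^{-1}(P) + z_x \subset C$. The parameter space $\Theta := SO(N) \times \bigl(C \cap \overline{B_{\diam(C)}}\bigr)$ is compact. At each $\theta_0 = (R_0, z_0) \in \Theta$ I inscribe a parallelepiped $Q_{\theta_0}$ with vertex at $0$ so that $Q_{\theta_0} + z_0$ lies strictly inside $C$ with positive distance to $\partial C$, and set $P_{\theta_0} := R_0(Q_{\theta_0})$; by uniform continuity the inclusion $R^{-1}(P_{\theta_0}) + z \subset C$ persists for $(R,z)$ in a neighborhood of $\theta_0$. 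A finite subcover of $\Theta$ yields fixed parallelepipeds $\hat P_1, \ldots, \hat P_m$, depending only on $C$, such that every $x \in K$ satisfies $x + \hat P_i \subset C^x$ for at least one $i$.

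Next I enforce the diameter bound. Define $\hat A_i := \{x \in K : x + \hat P_i \subset C^x \text{ for some } C^x \cong C \text{ with } x \in C^x \subset K\}$; a subsequence argument, based on the compactness of the family of closed subsets of $K$ congruent to $C$ in the Hausdorff distance (cf.\ Lemma~\ref{unione:coni}), shows each $\hat A_i$ is closed, hence compact. Refining $\hat A_i$ by intersection with a grid of closed cubes of side $\rho/\sqrt{N}$ produces at most $m \lceil d\sqrt{N}/\rho \rceil^N$ compact pieces of diameter at most $\rho$, which I relabel as $A_1,\ldots,A_l$ with associated parallelepipeds $P_1,\ldots,P_l$. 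Since $0 \in P_i$, we have $A_i \subset K_i := A_i + P_i$, while $x + P_i \subset C^x \subset K$ for every $x \in A_i$ gives $K_i \subset K$; thus $K = \bigcup_i K_i$ and items (a) and (b) follow.

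The Lipschitz regularity is the main obstacle. I take $\bar\rho > 0$ strictly smaller than the minimum edge length among all $\hat P_j$. Writing $P_i$ in the form $\{\sum_{j=1}^N t_j v_j^{(i)} : 0 \le t_j \le 1\}$, the plan is to describe $K_i = A_i + P_i$, in the non-orthogonal frame $\{v_j^{(i)}\}$, via $N$ pairs of graph functions $f_j^\pm$ defined on the projection onto the hyperplane complementary to $v_j^{(i)}$. The smallness $\diam(A_i) \le \bar\rho < \|v_j^{(i)}\|$ guarantees that every affine line parallel to $v_j^{(i)}$ meets $K_i$ in a single closed interval, so the functions $f_j^\pm$ are single-valued. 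A direct computation using convexity of $P_i$ shows each $f_j^\pm$ is Lipschitz with constants depending only on the aspect ratios of the $\hat P_j$, hence only on $C$. The technical core is to verify that the $2N$ graphs glue into a genuine Lipschitz atlas for $\partial K_i$ at every boundary point, including the corners where adjacent faces meet; this reduces to an elementary but careful geometric lemma about Minkowski sums of small-diameter compacta with non-degenerate parallelepipeds, and is where the constraint $\rho \le \bar\rho$ is essential.
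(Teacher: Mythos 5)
Your construction of the finitely many parallelepipeds $\hat P_1,\ldots,\hat P_m$ by compactness of the parameter space $\Theta$, the closedness of $\hat A_i$ via a Hausdorff subsequence argument, and the grid refinement giving parts a) and b) are sound and essentially reproduce the scheme in Adams' Theorem~4.8, which is what the paper's one-line proof invokes. A small correction: for the single-interval claim it is not enough that $\bar\rho$ be smaller than every edge length $\|v_j^{(i)}\|$; one must take $\bar\rho$ small relative to the norm of the inverse of the linear map $T_i$ sending the standard basis to $\{v_j^{(i)}\}$, so that $\diam(A_i)\le\bar\rho$ forces the $v$-coordinates of any two points of $A_i$ to differ by less than $1$ in each slot. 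Since there are finitely many $\hat P_j$ this is harmless.

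The Lipschitz regularity step, however, contains a genuine gap: the assertion that the envelope functions $f_j^\pm$ are Lipschitz with a constant depending only on $C$ is false, and they need not even be continuous. Take $N=2$, $P_i=[0,1]^2$, $A_i=\{(0,0),(\delta,\rho)\}$ with $0<\delta\ll\rho\le\bar\rho$; then $K_i=[0,1]^2\cup\bigl([\delta,1+\delta]\times[\rho,1+\rho]\bigr)$ and the lower vertical envelope jumps from $0$ to $\rho$ at $x=1$. Replacing the two points by the segment joining them makes the envelope continuous but of slope $\rho/\delta$, which is unbounded as $\delta\to 0$; such sloped, thin $A_i$ do arise, e.g.\ from a slanted slab $K$ only slightly thicker than the cone $C$. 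The underlying reason is that $\partial K_i$ contains re-entrant edges where a face $\{t_j=1\}$ of one translate $a+P_i$ meets transversally a face $\{t_k=0\}$ of another translate $a'+P_i$; there the boundary is a Lipschitz graph only in a direction intermediate between the two face normals, not along any $v_j^{(i)}$. So your $2N$ envelope graphs in fixed edge directions cannot form a Lipschitz atlas no matter how carefully they are glued, and the deferred ``elementary geometric lemma'' would in fact have to replace the $f_j^\pm$ by charts in other directions. What is actually needed (and what the modification of Adams amounts to) is a local uniform two-sided cone condition for $K_i=A_i+P_i$: show that around each $x_0\in\partial K_i$ there is a direction $\nu$, chosen according to which open faces of which translates of $P_i$ meet $\partial K_i$ near $x_0$, such that every $x\in B_\eta(x_0)\cap K_i$ has a cone of fixed opening in direction $-\nu$ inside $K_i$, and every $x\in B_\eta(x_0)\setminus K_i$ has the opposite cone in the complement; smallness of $\rho$ relative to the edge lengths and dihedral angles of $P_i$ makes the correct $\nu$ locally constant, and the standard characterization of Lipschitz domains then yields $\eta$ and $L_1$ depending only on $C$. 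This argument is missing and cannot be obtained by patching the envelope picture.
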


\begin{proof}
	It is easy to modify the argument used in \cite[Theorem 4.8]{adams} to our case of compact sets with the cone condition.
\end{proof}

\subsection{Class $\mathcal{G}(r,L,C)$}
We now introduce the class of hypersurfaces originally defined in \cite{giacomini}.

\begin{definition}
\label{classe:grlc}
	We say that a Lipschitz hypersurface $K\in\mathcal{K}(\overline{\Omega})$ belongs to the class $\mathcal{G}(r,L,C)$ with parameters $r>0$, $L>0$ and $C\subset \R^{N-1}$ finite closed cone with vertex in $0$, if, in addition to hypotheses (a) and (b) of Definition \ref{ipersup:lip}, it satisfies for any $x\in K$ the property
	\begin{enumerate}[(a$'$)]\setcounter{enumi}{2}
		\item for any $y\in B_{\frac{r}{2}}(x) \cap K$ there exists a finite closed cone $C_y\subset \pi$ with $C_y \cong C$ such that $ \Phi_x (y) \in C_y \subset \Phi_x(\overline{B_r(x)}\cap K) $.
	\end{enumerate}
\end{definition}

\begin{remark}
	In the original definition in \cite{giacomini}, property (c$'$) was slightly different, namely
	\begin{enumerate}[(a$''$)]\setcounter{enumi}{2}
		\item for any $y\in B_{\frac{r}{2}}(x) \cap K$ there exists a finite closed cone $C_y\subset \pi$ with $C_y \cong C$ such that $ \Phi_x (y) \in C_y \subset \Phi_x(B_r(x)\cap K) $.
	\end{enumerate}
	However, these two conditions are equivalent, up to changing the cone $C$. Indeed, (c$''$) obviously implies (c$'$). Viceversa, if we choose $\tilde{C}\subset C$ such that $\diam(\tilde{C})< r/(2L)$, we have that $y\in \tilde{C}_y \subset C_y \subset \Phi_x(\overline{B_r(x)}\cap K)$. However, $\tilde{C}_y \subset B_{r/(2L)}(y)$, so that $\Phi_x^{-1}(\tilde{C}_y) \subset B_{r/2}(\Phi_x^{-1}(y)) \subset B_r(x)$, which in turn implies that $\tilde{C}_y \subset \Phi_x(B_r(x)\cap K)$. Our modification on property (c$'$) is justified by the fact that it allows to prove that the class $\mathcal{G}(r,L,C)$ is closed, thus compact, with respect to the Hausdorff distance.
\end{remark}

\begin{proposition}
\label{convh:giac}
	Let $r>0$, $L>0$ and $C\subset \R^{N-1}$ finite closed cone with vertex in $0$ and let $\{ K_n\}_{n\in\N} \subset \mathcal{G}(r,L,C)$ such that $K_n \to K$ in the Hausdorff distance, with $K\in\mathcal{K}(\overline{\Omega})$.  
	Then $K\in \mathcal{G}(r,L,C)$ and also, up to a subsequence, $\bdry(K_n) \to H$ in the Hausdorff distance, with $H\subset K$ compact set such that $\bdry(K) \subset H$.
\end{proposition}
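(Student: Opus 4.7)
The proof proposal has three components: verifying conditions (a), (b) and (c$'$) for $K$; extracting a Hausdorff limit $H$ of $\bdry(K_n)$; and showing $\bdry(K)\subset H$.

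First, by Remark~\ref{stabab} applied to the sequence $K_n$, conditions (a) and (b) pass to the limit. More precisely, for any $x\in K$, picking $x_n\in K_n$ with $x_n\to x$ and writing $\Phi_{x_n}^n=\tilde\Phi_n(\cdot-x_n)$, Ascoli--Arzel\`a gives (up to a subsequence) a uniform limit $\tilde\Phi$ that is $L$-bi-Lipschitz, and $\Phi_x:=\tilde\Phi(\cdot-x)$ satisfies (a), (b). For (c$'$), take $y\in B_{r/2}(x)\cap K$ and approximate by $y_n\in K_n$ with $y_n\to y$; for $n$ large $y_n\in B_{r/2}(x_n)\cap K_n$, so (c$'$) for $K_n$ gives cones $C_{y_n}^n\subset\pi$ congruent to $C$ with $\Phi_{x_n}^n(y_n)\in C_{y_n}^n\subset \Phi_{x_n}^n(\overline{B_r(x_n)}\cap K_n)$. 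These cones have vertices lying in a bounded set (inside $\Phi_{x_n}^n(\overline{B_r(x_n)})$, which is uniformly bounded), so by compactness of the set of cones congruent to $C$ with vertex in a compact region, up to a further subsequence $C_{y_n}^n\to C_y\cong C$ in the Hausdorff distance. Uniform convergence of $\Phi_{x_n}^n\to\Phi_x$ then gives $\Phi_x(y)\in C_y$. For the inclusion, any $z\in C_y$ is the limit of $z_n\in C_{y_n}^n$, each with a preimage $w_n=(\Phi_{x_n}^n)^{-1}(z_n)\in\overline{B_r(x_n)}\cap K_n$; a subsequential limit $w_n\to w\in\overline{B_r(x)}\cap K$ (using $K_n\to K$ in Hausdorff) satisfies $\Phi_x(w)=z$ by uniform convergence, so $C_y\subset\Phi_x(\overline{B_r(x)}\cap K)$ and (c$'$) holds.

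Next, by compactness of $(\mathcal{K}(\overline\Omega),d_H)$, extract a subsequence along which $\bdry(K_n)\to H$ in the Hausdorff distance. The inclusion $H\subset K$ is immediate since $\bdry(K_n)\subset K_n$ and $K_n\to K$.

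The main obstacle is the inclusion $\bdry(K)\subset H$. I would argue by contradiction: suppose $x\in\bdry(K)$ satisfies $\dist(x,\bdry(K_{n_k}))>c$ along some subsequence. Choose $x_n\in K_n$ with $x_n\to x$, so that for large $k$, $\dist(x_{n_k},\bdry(K_{n_k}))>c/2$. Apply Remark~\ref{bdrychar} to $K_{n_k}$: any point of the $\pi$-boundary of $\Phi_{x_{n_k}}^{n_k}(K_{n_k}\cap B_r(x_{n_k}))$ lying in $B_{r/L}$ is the image of a point of $\bdry(K_{n_k})$, hence lies outside $\Phi_{x_{n_k}}^{n_k}(B_{c/2}(x_{n_k}))\supset B_{c/(2L)}(0)$. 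Thus the disk $B_\delta(0)\cap\pi$, with $\delta=\min\{c/(2L),r/L\}$, meets no $\pi$-boundary point of $\Phi_{x_{n_k}}^{n_k}(K_{n_k}\cap B_r(x_{n_k}))$; since it is connected and contains $0$, it is entirely contained in that image. Passing to the limit exactly as in the verification of (c$'$) above (using $L$-bi-Lipschitz estimates to keep the preimages in $B_{r/2}(x_n)\subset B_r(x_n)$), one obtains $B_{\delta/2}(0)\cap\pi\subset\Phi_x(K\cap B_r(x))$, which places $x$ in the interior of $K$ and contradicts $x\in\bdry(K)$.

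The delicate step is the last passage to the limit: one must handle the fact that closed balls of radius $r$ may degenerate to the limit ball's closure. I would overcome this by restricting to the smaller disk $B_{\delta/2}(0)\cap\pi$ and invoking the bi-Lipschitz estimate $|y_k-x_{n_k}|\le L|z|\le L\delta/2\le r/2$ to ensure that the preimages remain safely inside $B_r(x_{n_k})$, with limit points in the open ball $B_r(x)$ on which $\Phi_x$ is defined.
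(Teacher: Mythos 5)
Your proof is correct and follows essentially the same route as the paper's: Ascoli--Arzel\`a (via Remark~\ref{stabab}) for (a), (b), compactness of congruent cones with vertex in a bounded set plus uniform convergence of the charts for (c$'$), and, for the boundary inclusion, a contradiction from the fact that a uniform positive $\dist(x_{n_k},\bdry(K_{n_k}))$ forces a fixed disk around $0$ in $\pi$ into $\Phi_{x_{n_k}}^{n_k}(K_{n_k}\cap B_r(x_{n_k}))$, which passes to the limit and makes $x$ an interior point of $K$. The only cosmetic difference is that the paper derives the contradiction from a single point $z\in B_{c/(4L)}\cap\pi$ outside $\Phi_x(B_r(x)\cap K)$ rather than by covering a whole disk; these are the same argument by contraposition.
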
 

\begin{proof}
	To prove that $K\in \mathcal{G}(r,L,C)$, we have to show that for any $x\in K$ there exists $\Phi_x : B_r(x) \subset \R^N \to \R^N$ bi-Lipschitz map which satisfies conditions (a), (b) and (c$'$).  
	Let $x\in K$, then, since $K_n\to K$ in the Hausdorff distance, there exists $x_n\in K_n$ such that $x_n\to x$. Now $K_n\in \mathcal{G}(r,L,C)$ for every $n\in\N$, therefore there exists a bi-Lipschitz map $\Phi_{x_n}^n: B_r(x_n) \to \R^N$ satisfying
	(a), (b) and (c$'$). For any $y\in B_{r/2}(x_n) \cap K_n$, we call $C_y^n$ the corresponding cone in property (c$'$).
	
	We know that the maps $\tilde{\Phi}_n \colon \overline{B_r(0)} \to \R^N$ given by
		$\Phi_{x_n}^n = \tilde{\Phi}_n(\cdot - x_n)$, up to a subsequence, converge uniformly to $\tilde{\Phi} \colon \overline{B_r(0)} \to \R^N$
		and that the map $\Phi_x := \tilde{\Phi}(\cdot - x)$ satisfies (a) and (b).
		
 	To show (c$'$), let $y\in K\cap B_{r/2}(x)$, then, as before, there exist $y_n \in K_n$ such that $y_n \to y$. Up to a subsequence, $y_n \in B_{r/2}(x_n)\cap K_n$ and the corresponding cones of property (c$'$), $C_{y_n}^n $, converge in the Hausdorff distance to a cone $\tilde{C}\cong C$ such that $\Phi_x(y)\in \tilde{C}$, by uniform convergence. We need to show that $\tilde{C}=C_y$, that is, that
	$\tilde{C} \subset \Phi_x(\overline{B_r(x)}\cap K) $. Let $w\in \tilde{C}$. By Hausdorff convergence, always up to subsequences,
	we can find $z_n\in \overline{B_r(x_n)}\cap K_n$ such that $w_n=\Phi_{x_n}(z_n)\to w$. Without loss of generality, we can also assume that $z_n$ is converging to $z\in \overline{B_r(x)}\cap K$ and that, by uniform convergence, $\Phi_x(z)=w$, thus property (c$'$) is proved.
	
	We know that, up to a subsequence, $\bdry(K_n)\to H\in \mathcal{K}(\overline{\Omega})$ with $H\subset K$. By contradiction, assume that there exists $x\in\bdry(K)\setminus H$, hence there exist $c$, $0<c<r/2$, and $\{n_k\}_{k\in\N}$ such that $\dist(x,\bdry(K_{n_k})) > c$ for any $k\in \N$.
	However, by Hausdorff convergence, there exist $y_n\in K_n$ such that $y_n\to x$, so, for some $\bar{k}\in \N$, we have $\dist(y_{n_k},\bdry(K_{n_k})) > c/2$ for any $k\ge \bar{k}$. 	 
Clearly, $\Phi_{y_{n_k}}^{-1}(B_{c/(2L)})\subset
	B_{c/2}(y_{n_k})$ and, by Remark~\ref{bdrychar}, we have that $B_{c/(2L)}\cap \pi\subset \Phi_{y_{n_k}}(B_r(y_{n_k})\cap K)$. Since $x\in \bdry(K)$, there exists $z\in B_{c/(4L)}\cap \pi$ not belonging to $\Phi_x(B_r(x)\cap K)$. However, $w_k=\Phi^{-1}_{y_{n_k}}(z)\in K_{n_k}$ and $w_k\to w\in K$ and $\Phi_x(w)=z$, thus we obtain a contradiction.
\end{proof}

\begin{example}
\label{pacman}
	In the previous Proposition we showed that $\bdry(K_n) \to H$ in the Hausdorff distance with $\bdry(K) \subset H$, but it could happen that the inclusion $\bdry(K) \subset H$ is strict, indeed consider the following example.
	
	Using polar coordinates in $\R^2$, define:
	\[ A = \overline{B_1}; \quad A_n = \{ (r,\vartheta) \mid 0 \le r \le 1 , \, \frac{1}{n} \le \vartheta \le 2\pi \}	\quad \forall n\ge 1 \]
	Set $K=A \times \{0\}$ and $K_n = A_n \times \{0\}$, for every $n\ge 1$, as subsets of $\R^3$. It is easy to show that, for some $r$, $L$ and $C$, we have $K_n \in \mathcal{G}(r,L,C)$ for any $n\ge 1$ and $K \in \mathcal{G}(r,L,C)$. Moreover, $K_n \to K$ in the Hausdorff distance but
	$\partial K_n\to H$ in the Hausdorff distance, where
	\[ \bdry(K) = \partial A \times \{0\}\subsetneq H = \bdry(K) \cup (\{(r,\vartheta) \mid 0\le r \le 1, \vartheta = 0 \} \times \{0\}).  \] 
\end{example}
	
We finally show that our class $\mathcal{FR}$ generalises the class $\mathcal{G}$.

\begin{theorem}
\label{inclusione:giac}
	Let $r>0$, $L>0$ and $C\subset \R^{N-1}$ finite closed cone with vertex in $0$. Then there exist $r'>0$, $L'>0$ and $M_0\in\N$, depending only on $r$, $L$, $C$ and, only for $M_0$, also on the diameter of $\Omega$, such that for any $K\in\mathcal{G}(r,L,C)$, we have that $K\in\mathcal{FR}(r',L',M_0)$.
\end{theorem}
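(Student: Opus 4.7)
The plan is to apply the Gagliardo decomposition theorem (Theorem~\ref{gagliardo}) locally through the $\mathcal{G}$-charts, thereby carving each $K \in \mathcal{G}(r,L,C)$ into boundedly many pieces, each of which is shown to lie in $\mathcal{MR}(r',L')$.

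First, I would cover $K$ by balls $B_\delta(x_i)$, $i = 1, \ldots, M$, centered at points $x_i \in K$ with $\delta$ small compared to $r/L^2$; since $K \subset \overline{\Omega}$, the number $M$ can be bounded in terms of $r$, $L$ and $\diam(\Omega)$ alone. Working in the chart $\Phi_i = \Phi_{x_i}$, the flattened image $\Phi_i(\overline{B_\delta(x_i)}\cap K) \subset \pi$ does not directly inherit the cone condition of Definition~\ref{cone:cond}, because in (c$'$) the cones $C_y$ are only guaranteed to lie inside the larger set $\Phi_i(\overline{B_r(x_i)} \cap K)$. To fix this, I would shrink $C$ to a subcone $\widetilde{C}$ of diameter less than $\delta/L$; for any $y \in \overline{B_\delta(x_i)}\cap K$ and the cone $C_y$ given by (c$'$), a correspondingly shrunk cone $\widetilde{C}_y \cong \widetilde{C}$ containing $\Phi_i(y)$ can be chosen inside $C_y$, and the bi-Lipschitz bound forces $\Phi_i^{-1}(\widetilde{C}_y) \subset \overline{B_{2\delta}(x_i)} \cap K$. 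The closure $\widetilde{S}_i := \overline{\bigcup_{y \in \overline{B_\delta(x_i)}\cap K} \widetilde{C}_y}$ is then, by Lemma~\ref{unione:coni}, a compact subset of $\pi$ satisfying the genuine cone condition with cone $\widetilde{C}$, and contains $\Phi_i(\overline{B_\delta(x_i)} \cap K)$.

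Next, I would apply Theorem~\ref{gagliardo} in $\R^{N-1}$ to each $\widetilde{S}_i$ with $\rho$ chosen small in terms of $\widetilde{C}$, obtaining $\widetilde{S}_i = \bigcup_{j=1}^l K^{i,j}$ with each $K^{i,j}$ the closure of an open set in $\pi$ whose boundary is Lipschitz with constants $\eta, L_1$ depending only on $C$. Pulling back, I would define $H^{i,j} := \Phi_i^{-1}(K^{i,j}) \subset K$; the union $\bigcup_{i,j} H^{i,j}$ covers $K$ since the $\widetilde{S}_i$ cover $\bigcup_i \Phi_i(\overline{B_\delta(x_i)}\cap K)$. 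The total number of pieces is at most $M \cdot l$, giving $M_0$ depending only on $r$, $L$, $C$, $\diam(\Omega)$.

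Finally, I would verify that each $H^{i,j}$ belongs to $\mathcal{MR}(r',L')$ for suitable $r' = r'(r,L,C)$ and $L' = L'(r,L,C)$. For any $z \in H^{i,j}$, the translated chart $\Psi_z(w) := \Phi_i(w) - \Phi_i(z)$, defined on $B_{r-2\delta}(z)$, satisfies (a) and (b) of Definition~\ref{ipersup:lip} since $\Phi_i(H^{i,j}) \subset \pi$. If $z$ is a boundary point of $H^{i,j}$ in the sense of Definition~\ref{mrclassdef}, then $\Phi_i(z)$ lies on the Lipschitz boundary of $K^{i,j}$ inside $\pi$, which up to a rigid change of coordinates in $\pi$ is the graph of a Lipschitz function $\varphi$ with constant bounded by $L_1$; composing $\Psi_z$ with the bi-Lipschitz map $\Theta(y) := (y_1,\ldots,y_{N-2}, y_{N-1} - \varphi(y_1,\ldots,y_{N-2}), y_N)$ yields a chart that additionally sends $H^{i,j}$ locally onto the half-plane piece $\Psi_z(B_{r'}(z)) \cap \pi^+$. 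The principal obstacle will be precisely the cone-condition mismatch between (c$'$) and Definition~\ref{cone:cond}: the cones in (c$'$) live in an enlarged flattened region, so a direct application of Gagliardo is impossible and the shrinking-and-closing device of the first step is essential. The remaining estimates on $r'$ and $L'$ are routine, depending only on $r$, $L$, $L_1$ and $\eta$, hence ultimately only on $r$, $L$, $C$.
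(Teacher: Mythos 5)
Your proposal is correct and follows essentially the same route as the paper's proof: cover $K$ with $O(1)$ small balls depending only on $r$ and $\diam(\Omega)$; in each chart, replace the slightly weaker cone-type condition (c$'$) by a genuine cone condition on a suitable cone-union set (the paper takes $\delta = r/3$ and unions all congruent cones meeting the smaller ball's image, you take cones through each flattened point; both rely on Lemma~\ref{unione:coni} to close up); apply the Gagliardo decomposition to split the resulting set into $l$ closed Lipschitz pieces in $\pi$; and pull back by the chart to produce the $\mathcal{MR}(r',L')$ components, with $M_0 = Ml$. The only minor cosmetic differences are the choice of $\delta$ and the exact parametrization of the cone family, neither of which changes the substance.
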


\begin{proof}
	Let $K\in\mathcal{G}(r,L,C)$ and observe that, without loss of generality, up to shrinking the cone $C$, we can always suppose that
	\begin{equation}
	\label{dimgiac:eqn1}
		\diam(C) < \frac{r}{8L}.
	\end{equation}
	Since $K\in\mathcal{G}(r,L,C)$, it is clear that $K$ satisfies hypotheses (a) and (b) globally, therefore to show that $K\in\mathcal{FR}(r,L,M_0)$, we just need to find a decomposition of $K$ in hypersurfaces of type $\mathcal{MR}(r',L')$.  
	Given that $K$ is compact, there exists a finite number $m\in\N$ of points $x_1,\ldots,x_m\in K$ such that $K \subset \bigcup_{i=1}^m B_{r/3}(x_i)$.
	We point out that, for some $m_0$ depending only on $r$ and the diameter of $\Omega$, we can assume that $m\leq m_0$.
		Now define for every $i=1,\ldots,m$
	\begin{equation}
	\label{dimgiac:eqn2}
		K_i := \Phi_{x_i}^{-1} \left( \overline{\bigcup_{C'\in\,\mathcal{C}_{x_i}} C'} \right) 
	\end{equation}
	where
	\begin{align*}
	\mathcal{C}_{x_i} := \{ & C' \subset \pi \text{ finite closed cone} \mid C' \cong C \text{, } C' \subset \Phi_{x_i}(\overline{B_r(x_i)} \cap K) \\ 
	& \text{and } C' \cap \Phi_{x_i}(\overline{B_{r/3}(x_i)} \cap K) \ne \emptyset \},
	\end{align*} which is not empty thanks to condition (c$'$). We have that, for every $i=1,\ldots,m$, $K_i$ is compact and, by (c$'$) and \eqref{dimgiac:eqn1}, $B_{r/3}(x_i) \subset K_i \subset B_{r/2} (x_i)$, hence $K=\bigcup_{i=1}^m K_i$.
	Moreover, by Lemma~ \ref{unione:coni}, the set $\Phi_{x_i}(K_i) \subset \pi$ satisfies the cone condition with respect to $C$. By applying Theorem~\ref{gagliardo} with $\rho = \bar{\rho}/2$, which depends only on $C$, to $\Phi_{x_i}(K_i)$ we have
	\begin{equation*}
		\Phi_{x_i}(K_i) := \bigcup_{k=1}^l A_{i,k}
	\end{equation*}
	with $A_{i,k}$ given by the closure, in $\pi$, of open subsets in $\pi$ which are Lipschitz with constants $\eta$ and $L_1$, which also depends only on the cone $C$. Note that $l\in \N$ depends only on $C$, $r$ and $L$. We call
		\begin{equation*}
		K_{i,k} := \Phi_{x_i}^{-1} (A_{i,k})
	\end{equation*}
	for every $i=1,\ldots,m$ and $k=1,\ldots,l$.
	
	It is not difficult to show that we can find positive $r'$ and $L'$, depending only on $r$, $L$, $C$, such that
	$K_{i,k}\in\mathcal{MR}(r',L')$ for every $i=1,\ldots,m$ and $k=1,\ldots,l$. If we take $M_0:=m_0l$, since $ml\leq M_0$, the proof is concluded.
	\end{proof}

\end{document}